\newcommand{\hyp}[5]{{}_{#1}F_{#2}\!\left(\genfrac{}{}{0pt}{}{#3}{#4};#5\right)}
\newcommand{\Ohyp}[5]{\,\mbox{}_{#1}{\bm{F}}_{#2}\!\left(
\genfrac{}{}{0pt}{}{#3}{#4};#5\right)}
\definecolor{Mycolor2}{HTML}{e85d04}
\newcommand\boro[1]{{\textcolor{black}{#1}}}
\newcommand{\bfx}{{\bf x}}
\newcommand{\bfxp}{{{\bf x}^\prime}}
\newcommand{\N}{{\mathbf N}}
\newcommand{\Si}{{\mathbf S}}
\newcommand{\Pii}{{\mathbf P}}
\newcommand{\Hi}{{\mathbf H}}
\newcommand{\mcg}{{\mathcal G}}
\newcommand{\OO}{{{\mathbb O}}}
\newcommand{\HH}{{{\mathbb H}}}
\def\mcO{\mathcal{G}^{\OO\Hi^2_R}}
\def\mcH{\mathcal{G}^{\HH\Hi^n_R}}
\def\mcC{\mathcal{G}^{\CC\Hi^n_R}}
\def\mpO{\mathcal{G}^{\OO\Pii^2_R}}
\def\mpH{\mathcal{G}^{\HH\Pii^n_R}}
\def\mpC{\mathcal{G}^{\CC\Pii^n_R}}
\def\CH2{\CC\Hi^2_R}
\newcommand{\expe}{{\mathrm e}}
\newcommand{\dd}{{\mathrm d}}
\newcommand{\Z}{{\mathbf Z}}
\newcommand{\CC}{{{\mathbb C}}}
\newtheorem{thm}[lemma]{Theorem}
\newtheorem{cor}[lemma]{Corollary}
\newtheorem{rem}[lemma]{Remark}
\newtheorem{lem}[lemma]{Lemma}
\def\eqnarray{\stepcounter{equation}\let\@currentlabel=\theequation
\global\@eqnswtrue
\tabskip\@centering\let\\=\@eqncr
$$\halign to \displaywidth\bgroup\hfil\global\@eqcnt\z@
 $\displaystyle\tabskip\z@{##}$&\global\@eqcnt\@ne
 \hfil$\displaystyle{{}##{}}$\hfil
 &\global\@eqcnt\tw@ $\displaystyle{##}$\hfil
 \tabskip\@centering&\llap{##}\tabskip\z@\cr}
\def\endeqnarray{\@@eqncr\egroup
   \global\advance\c@equation\m@ne$$\global\@ignoretrue}
\def\@yeqncr{\@ifnextchar [{\@xeqncr}{\@xeqncr[5pt]}}
\def\sideremark#1{\ifvmode\leavevmode\fi\vadjust{\vbox to0pt{\vss
 \hbox to 0pt{\hskip\hsize\hskip1em%
 \vbox{\hsize2cm\tiny\raggedright\pretolerance10000
 \noindent #1\hfill}\hss}\vbox to8pt{\vfil}\vss}}}
\newcommand{\R}{\mathbb{R}}
\newcommand{\C}{\mathbb{C}}
\begin{document}

\renewcommand{\PaperNumber}{***}

\FirstPageHeading

\ShortArticleName{
Double summation addition theorems for Jacobi functions of the first and second kind
}

\ArticleName{Double summation addition theorems for\\ Jacobi functions of the first and second kind}
\Author{Howard S.~Cohl\,$^\ast$, 
Roberto S.~Costas-Santos\,$^\S$,
Loyal Durand\,$^\dag$,
Camilo Montoya\,$^\ast$
and Gestur \'{O}lafsson\,$^\ddag$
}

\AuthorNameForHeading{H.~S.~Cohl, 
R.~S.~Costas-Santos,
L.~Durand,
C. Montoya, G. \'{O}lafsson}

\Address{$^\ast$~Applied and Computational Mathematics Division,
National Institute of Standards and Technology,
Gaithersburg, MD 20899-8910, USA
} 
\URLaddressD{
\href{http://www.nist.gov/itl/math/msg/howard-s-cohl.cfm}
{http://www.nist.gov/itl/math/msg/howard-s-cohl.cfm}
}
\EmailD{howard.cohl@nist.gov, camilo.montoya@nist.gov} 

\Address{$^\S$ Department of Quantitative Methods, Universidad Loyola Andaluc\'ia, Sevilla, Spain
} 
\URLaddressD{
\href{http://www.rscosan.com}
{http://www.rscosan.com}
}
\EmailD{rscosa@gmail.com} 

\Address{$^\dag$~Department of Physics, University of Wisconsin, Madison, WI 53706, USA
} 
\EmailD{ldurand@wisc.edu} 

\Address{$^\ddag$~Department of Mathematics, Louisiana State University, Baton Rouge, LA 70803-4918, USA
} 
\URLaddressD{
\href{http://www.math.lsu.edu/~olafsson}
{http://www.math.lsu.edu/$\sim$olafsson}
}
\EmailD{olafsson@lsu.edu} 

\ArticleDates{Received ???, in final form ????; Published online ????}

\Abstract{In this paper we review and derive hyperbolic and trigonometric double summation addition theorems for Jacobi functions of the first and second kind. In connection with these addition theorems, we perform a full analysis of the relation between symmetric, antisymmetric and odd-half-integer parameter values for the Jacobi functions with certain Gauss hypergeometric functions which satisfy a quadratic transformation, including associated Legendre, Gegenbauer and Ferrers functions of the first and second kind.
We also introduce Olver normalizations of the Jacobi functions which are particularly useful in the derivation of expansion formulas when the parameters are integers. We introduce an application of the addition theorems for the Jacobi functions of the second kind to separated eigenfunction expansions of a fundamental solution of the Laplace-Beltrami operator on the compact and noncompact rank one symmetric spaces.}

\Keywords{
Addition theorems;
Jacobi function of the first kind;
Jacobi function of the second kind;
Jacobi polynomials;
ultraspherical polynomials}

\Classification{33C05, 33C45, 53C22, 53C35}

\begin{flushright}
\begin{minipage}{70mm}
\it Dedicated to Dick Askey whose favorite function was the Jacobi polynomial.
\end{minipage}
\end{flushright}

\section{Introduction}

Jacobi polynomials (hypergeometric polynomials)
were introduced by the German mathematician Carl Gustav 
Jacob Jacobi (1804--1851).
These polynomials first appear in an article by Jacobi which was published posthumously in 1859 by Heinrich Eduard Heine \cite{Jacobi1859}. Jacobi polynomials, $P_n^{(\alpha,\beta)}(x)$, 
are polynomials which for $\Re \alpha, \Re\beta>-1$ are 
orthogonal on the real segment $[-1,1]$ \cite[(9.8.2)]{Koekoeketal} 
and can be defined in terms of a terminating sum as follows:
\begin{equation}
P_n^{(\alpha,\beta)}(\cos\theta):=\frac{\Gamma(\alpha+1+n)}
{\Gamma(n+1)\,\Gamma(\alpha+\beta+1+n)}
\sum_{k=0}^n(-1)^k\binom{n}{k}
\frac{\Gamma(\alpha+\beta+1+n+k)}{\Gamma(\alpha+1+k)}
\sin^{2k}(\tfrac12\theta),
\end{equation}
where $\Gamma$ is the gamma function \cite[\href{http://dlmf.nist.gov/5.2.E1}{(5.2.1)}]{NIST:DLMF}, and
$\binom{n}{k}$ the binomial 
coefficient \cite[\href{http://dlmf.nist.gov/1.2.E1}{(1.2.1)}]{NIST:DLMF}.
The above definition of the Jacobi polynomial is equivalent to 
the following Gauss hypergeometric representation \cite[\href{http://dlmf.nist.gov/18.5.E7}{(18.5.7)}]{NIST:DLMF}:
\begin{equation}
\label{Jacobipolydef}
P_n^{(\alpha,\beta)}(x)=
\frac{(\alpha+1)_n}{n!}
\hyp21{-n,n+\alpha+\beta+1}{\alpha+1}{\frac{1\!-\!x}{2}},
\end{equation}
where $x=\cos \theta$.
We will return to the notations used in \eqref{Jacobipolydef} 
in the following section.

Ultraspherical polynomials, traditionally defined by \cite[\href{http://dlmf.nist.gov/18.7.E1}{(18.7.1)}]{NIST:DLMF}
\begin{equation}
C_n^\lambda(\cos\theta):=\frac{\Gamma(\lambda+\frac12)
\Gamma(2\lambda+n)}{\Gamma(2\lambda)\Gamma(\lambda+\frac12+n)}
P_n^{(\lambda-\frac12,\lambda-\frac12)}(\cos\theta),
\end{equation}
are symmetric $\alpha\!=\!\beta$ Jacobi polynomials.
These polynomials are 
commonly referred 
to as Gegenbauer polynomials after Austrian mathematician 
Leopold Gegenbauer 
(1849--1903). However the Czech (Austrian) astronomer and mathematician Moriz All\'{e} discovered and used many of their fundamental properties including their generating function {\it and} addition theorem \cite{Alle1865} almost a decade prior to Gegenbauer \cite{Gegenbauer1874,Gegenbauer1893}, and Heine \cite[p.~455]{Heine1878}.
See the nice discussion of the history of the addition theorem for ultraspherical polynomials by Koornwinder in \cite[p.~383]{Koornwinder18}; see also \cite[\href{http://dlmf.nist.gov/18.18.E8}{(18.18.8)}]{NIST:DLMF}.
The addition theorem for ultraspherical polynomials is given by 
\begin{eqnarray}
&&\hspace{-0.5cm}C_n^\lambda(\cos\theta_1\cos\theta_2
\pm\sin\theta_1\sin\theta_2\cos\phi)\nonumber\\
&&\hspace{0.5cm}=\frac{n!}{(2\lambda)_n}\sum_{k=0}^n \frac{(\mp 1)^k 
(\lambda)_k(2\lambda)_{2k}}{(-n)_k(\lambda-\frac12)_k(2\lambda+n)_k}
(\sin\theta_1\sin\theta_2)^kC_{n-k}^{\lambda+k}(\cos\theta_1)
C_{n-k}^{\lambda+k}(\cos\theta_2)C_k^{\lambda-\frac12}(\cos\phi).
\label{Gegaddn}
\end{eqnarray}
This result is quite important all by itself. 
In the special case $\lambda=\frac12$, it becomes one way of writing the addition 
theorem for spherical harmonics on the two-dimensional sphere:
\begin{equation}
P_n(\cos\theta_1\cos\theta_2\pm \sin\theta_1\sin\theta_2\cos\phi)=
\sum_{k=-n}^n(\pm 1)^k
\frac{(n-k)!}{(n+k)!}{\sf P}_n^k(\cos\theta_1)
{\sf P}_n^k(\cos\theta_2)\cos(k\phi),
\end{equation}
where the ${\sf P}_n^k$ are Ferrers functions of the first 
kind \cite[\href{http://dlmf.nist.gov/14.3.E1}{(14.3.1)}]{NIST:DLMF}. 
See the foreword of Willard Miller (1977) \cite{Miller} written by Richard Askey for a 
beautiful discussion (on pp.~xix--xx) of addition theorems. 
These addition theorems are intimately related to separated 
eigenfunction expansions of spherical functions (reproducing kernels) 
on highly symmetric (isotropic) manifolds. 
In fact, on a $d$-dimensional hypersphere, the special argument 
of Gegenbauer's addition theorem is easily expressible 
in terms of the geodesic distance between two arbitrary points.

Given the addition theorem for ultraspherical polynomials \eqref{Gegaddn}, it was a natural problem to extend this to Jacobi polynomials for $\alpha\ne\beta$. It was a good match when Richard Askey, on sabbatical at the Mathematical Centre in Amsterdam during 1969--1970, met Tom Koornwinder there, who had some experience with group theoretical methods and was looking for a good subject for a Ph.D.~thesis. 
Askey suggested to Koornwinder the problem of finding an addition theorem for Jacobi polynomials, and he also arranged that Koornwinder could attend a special year at the Mittag-Leffler Institute in Sweden. There Koornwinder obtained the desired result \cite{Koornwinder1972AI,Koornwinder72B,Koornwinder72C}.
He later found that his group theoretic method and the resulting addition theorem in a special case were anticipated by two papers in Russian:~Vilenkin and \v{S}apiro \cite{VilenkinSapiro1967}
realized that disk polynomials 
\cite[\href{http://dlmf.nist.gov/18.37.E1}{(18.37.1)}]{NIST:DLMF} 
and in particular the Jacobi polynomials 
$P_n^{(\alpha,0)}$, $\alpha$ {an} integer, can be interpreted as spherical functions on the complex projective space $\mathrm{SU}(\alpha+2)/\mathrm{U}(\alpha+1)$ 
\cite{Cartan1929,Cartan1931} or as spherical functions on 
the complex unit sphere $\mathrm{U}(\alpha+2)/\mathrm{U}(\alpha+1)$ in
$\mathbb{C}^{\alpha+2}$ as a homogeneous space of the unitary group $\mathrm{U}(\alpha+2)$
(see references by Ikeda, Kayama and Seto in \cite{Koornwinder72A}).
\v{S}apiro 
obtained from that observation, the addition theorem for Jacobi polynomials in the $\beta=0$ case
\cite{Sapiro1968}.

Koornwinder initially presented his addition theorem
for Jacobi polynomials in a series of three papers in 1972
\cite{Koornwinder1972AI,Koornwinder72B,Koornwinder72C}.
Koornwinder gave four different proofs of the addition formula for Jacobi polynomials. His first proof {focused on the spherical functions of the Lie group}
$\mathrm{U}(d)/\mathrm{U}(d-1)$, $d\ge 2$ integer, and appeared in 
\cite{Koornwinder72B,Koornwinder72C}; his second proof that used
ordinary spherical harmonics
appeared in \cite{Koornwinder73};
his third proof was 
an analytic proof and it appeared in \cite{MR385197,Koornwinder74,Koornwinder75}; and a short proof using
orthogonal polynomials in three variables which appeared in 
\cite{Koornwinder77}.

Let us consider 
the trigonometric context of Koornwinder's addition theorem for 
the Jacobi polynomials. 
Let $n\in\mathbb N_0$, $\alpha>\beta>-\frac12$,
$\cos\theta_1=\frac12(\expe^{i\theta_1}+\expe^{-i\theta_1})$,
$\cos\theta_2=\frac12(\expe^{i\theta_2}+\expe^{-i\theta_2})$, 
$w\in(-1,1)$,
$\phi\in[0,\pi]$.
Then Koornwinder's addition theorem for Jacobi polynomials is given by
\begin{eqnarray}
&&\hspace{-0.45cm}{P}_n^{(\alpha,\beta)}
\left(2|\cos\theta_1\cos\theta_2\pm\expe^{i\phi}w\sin\theta_1
\sin\theta_1|^2-1\right)
=\frac{n!\Gamma(\alpha+1)}{\Gamma(\alpha+n+1)}\sum_{k=0}^n
\frac{(\alpha+1)_k(\alpha+\beta+n+1)_k}
{(\alpha+k)(\beta+1)_k(-n)_k}
\nonumber\\
&&\hspace{-0.25cm}\times\sum_{l=0}^k
(\mp 1)^{k-l}
\frac{(\alpha+k+l)(-\beta-n)_l}{(\alpha+n+1)_l}
(\cos\theta_1\cos\theta_2)^{k-l}\left(\sin\theta_1\sin\theta_2\right)^{k+l}
\nonumber\\[0.1cm]
&&\hspace{0.1cm}\times
{P}_{n-k}^{(\alpha+k+l,\beta+k-l)}\left(\cos(2\theta_1)\right)
{P}_{n-k}^{(\alpha+k+l,\beta+k-l)}\left(\cos(2\theta_2)\right)
w^{k-l}{P}_{l}^{(\alpha-\beta-1,\beta+k-l)}(2w^2-1)\frac{\beta\!+\!k\!-\!l}{\beta}C_{k-l}^\beta(\cos\phi) .
\label{addPtrigfirst}
\end{eqnarray}
As we will see in Section \ref{reffirstKoorn} below, this addition theorem and its various counterparts for Jacobi functions of the first and second kind are deeply connected to a 2-variable orthogonal polynomial system sometimes referred to as parabolic biangle polynomials ${\mathcal P}_{k,l}^{(\alpha,\beta)}(w,\phi)$.
Furthermore, this addition theorem for Jacobi polynomials represents a separated eigenfunction 
expansion of the spherical functions on highly symmetric 
(isotropic) manifolds (so-called compact symmetric spaces of rank one) and the special argument is again 
given by the geodesic distance between two points on these manifolds. 
We shall return to this later.

In the case of ultraspherical and Jacobi polynomials, the sum is terminating, 
as one would expect since the object of study is a polynomial. 
However, as Flensted-Jensen and Koornwinder realized \cite{FlenstedJensenKoorn79}, the addition theorem for Jacobi polynomials can 
be extended to Jacobi functions of the first kind by formally 
taking the outer sum limit to infinity. 
While Jacobi polynomials $P_n^{(\alpha,\beta)}$ have $n$ discrete, the Jacobi functions $\varphi_\lambda^{(\alpha,\beta)}$ have $\lambda$ continuous
(see \eqref{FJKJdef} below).
When one starts to consider Jacobi functions, then many 
new questions arise which must be understood for a full 
theory of the separated eigenfunctions expansions of Jacobi functions. 
First of all, one must consider two separate contexts, 
the trigonometric context 
where the arguments of the functions are analytically 
continued from the segment $(-1,1)$ and also the hyperbolic 
context, where the arguments of the functions are analytically 
continued from the segment $(1,\infty)$. 
On top of that, one must also consider the particular expansions 
of Jacobi functions of the second kind.
Gegenbauer and Jacobi functions are solutions to 
second-order ordinary differential equations. 
Therefore there are two linearly independent solutions, 
namely the functions of the first kind and the functions 
of the second kind. The separated eigenfunction expansions 
of the Gegenbauer functions of the first and second 
kind were treated quite extensively in a paper by 
Durand, Fishbane and Simmons (1976) \cite{DurandFishSim}. 
Durand extended Koornwinder's addition theorem to Jacobi (and other) functions of the second kind  in \cite{Durand79}.
The study of multi-summation addition theorems for Jacobi 
functions of the first and second kind seems not to have 
moved forward since 
the advances by 
Durand and by Flensted-Jensen and Koornwinder. 
In the remainder of this paper, we give the full 
multi-summation expansions of Jacobi functions of the 
second kind and bring the full theory of the expansions 
of Jacobi functions to a circle. However, there still 
remain open questions in the study, and we will return 
to these questions later.
\section{Preliminaries}

Throughout this paper we adopt the following set notations:
$\mathbb N_0:=\{0\}\cup\mathbb N=\{0, 1, 2, 3, \ldots\}$, and
we use the set $\mathbb C$ which represents the complex
numbers.
Jacobi functions (and their special cases such as Gegenbauer, associated Legendre and Ferrers functions) have representations given in terms of Gauss hypergeometric functions which can be defined in terms of an infinite series over ratios of 
shifted factorials (Pochhammer symbols). The shifted factorial
can be defined for $a\in\mathbb C$,
$n\in\mathbb N_0$ 
by \cite[\href{http://dlmf.nist.gov/5.2.E4}{(5.2.4)}, \href{http://dlmf.nist.gov/5.2.E5}{(5.2.5)}]{NIST:DLMF}
$
(a)_n:=(a)(a+1)\cdots(a+n-1).
$
The following ratio of two gamma functions \cite[\href{http://dlmf.nist.gov/5}{Chapter 5}]{NIST:DLMF} are related to 
the shifted factorial, namely
for $a\in\mathbb C\setminus-{\mathbb N}_0$, one
 has
\begin{equation}
(a)_n=\frac{\Gamma(a+n)}{\Gamma(a)},
\label{Pochdef}
\end{equation}
which allows one to extend the definition to non-positive 
integer values of $n$.
Some other properties of shifted factorial which we will use are
($n,k\in\mathbb N_0$, $n\ge k$)
\begin{eqnarray}
&&\hspace{-12.5cm}\Gamma(a-n)=\frac{(-1)^n\Gamma(a)}{(1-a)_n},\label{Poch1}\\
&&\hspace{-12.5cm}(-n)_k=\frac{(-1)^kn!}{(n-k)!}.\label{Poch2}
\end{eqnarray}
One also has the following
expression for the generalized 
binomial coefficient 
for $z\in\mathbb C$, $n\in\mathbb N_0$
\cite[\href{http://dlmf.nist.gov/1.2.E6}{(1.2.6)}]{NIST:DLMF}
\begin{equation}
\label{binomz}
\binom{z}{n}=\frac{(-1)^n (-z)_n}{n!}.
\end{equation}
Define the multisets ${\bf a}:=\{a_1,\ldots,a_r\}$, ${\bf b}:=\{b_1,\ldots,b_s\}$.
We will also use the common notational product convention,
$a_l\in\mathbb C$, $l\in\mathbb N$, $r\in\mathbb N_0$, e.g.,
\begin{eqnarray}
&&\hspace{-9.2cm}
({\bf a})_k:=(a_1,\ldots,a_r)_k:=(a_1)_k(a_2)_k\cdots(a_r)_k,\\[0.1cm]
&&\hspace{-9.2cm}\Gamma({\bf a}):=\Gamma(a_1,\ldots,a_r):=\Gamma(a_1)\cdots\Gamma(a_r).
\end{eqnarray}
Also define the multiset notation ${\bf a}+t:=\{a_1+t,\ldots,a_r+t\}$.

For any expression of the form
$(z^2-1)^\alpha$, we fix the branch of the power functions such that
\[
(z^2-1)^\alpha:=(z+1)^\alpha(z-1)^\alpha,
\]
for any fixed $\alpha\in{\mathbb C}$ and 
$z\in{\mathbb C}\setminus\{-1,1\}$.
The generalized hypergeometric
function \cite[\href{http://dlmf.nist.gov/16}{Chapter 16}]{NIST:DLMF} is defined by the 
infinite series \cite[\href{http://dlmf.nist.gov/16.2.E1}{(16.2.1)}]{NIST:DLMF}
\begin{equation} \label{genhyp}
{}_rF_s
({\bf a};{\bf b};z):=\hyp{r}{s}{{\bf a}}
{{\bf b}}{z}
:=
\sum_{k=0}^\infty
\frac{({\bf a})_k}
{({\bf b})_k}
\frac{z^k}{k!},
\end{equation}
where $|z|<1$, $b_j\not \in -\mathbb N_0$, for 
$j\in\{1, \dots, s\}$;
and elsewhere by analytic continuation.
Further define the Olver normalized (scaled or regularized) 
generalized hypergeometric series
${}_r\bm{F}_s({\bf a};{\bf b};z)$, given
by
\begin{equation}
\label{regrFs}
\hspace{-0.0cm}
{}_r\bm{F}_s
({\bf a};{\bf b};z):=
\Ohyp{r}{s}{{\bf a}}{{\bf b}}{z}:=
\frac{1}{\Gamma({\bf b})}\hyp{r}{s}{{\bf a}}{{\bf b}}{z}=
\sum_{k=0}^\infty \frac{(a_1,\ldots,a_r)_k}{\Gamma({\bf b}+k
)}
\frac{z^k}{k!},
\end{equation}
which is entire for all $a_l,b_j\in\mathbb C$, $l\in\{1,\ldots,r\}$, $j\in\{1,\ldots,s\}$.
Both the generalized and Olver normalized generalized hypergeometric series, if 
nonterminating, are entire if $r\le s$, convergent for $|z|<1$ if $r=s+1$ 
and divergent if $r\ge s+1$.
\medskip
The special case of the generalized hypergeometric function with $r=2$, $s=1$ is referred to as the Gauss hypergeometric function \cite[\href{http://dlmf.nist.gov/15}{Chapter 15}]{NIST:DLMF}, or simply the hypergeometric function. It has many interesting properties, including linear transformations which were discovered by Euler and Pfaff. 
Euler's linear transformation is \cite[\href{http://dlmf.nist.gov/15.8.E1}{(15.8.1)}]{NIST:DLMF}
\begin{equation}
\Ohyp21{a,b}{c}{z}=(1-z)^{c-a-b}\Ohyp21{c\!-\!a,c\!-\!b}{c}{z}
\label{Eulertran}
\end{equation}
and Pfaff's linear transformation is \cite[\href{http://dlmf.nist.gov/15.8.E1}{(15.8.1)}]{NIST:DLMF}
\begin{equation}
\Ohyp21{a,b}{c}{z}
=(1-z)^{-a}\Ohyp21{a,c\!-\!b}{c}{\frac{z}{z-1}}
=(1-z)^{-b}\Ohyp21{b,c\!-\!a}{c}{\frac{z}{z-1}}.\label{Pfafftran}
\end{equation}

\subsection{The Gegenbauer and associated Legendre functions}
The functions which satisfy quadratic transformations of the Gauss 
hypergeometric function are given by Gegenbauer and associated Legendre functions of the 
first and second kind. As we will see, these
functions correspond to Jacobi functions of the first and second kind when 
their parameters satisfy certain relations. We now describe some of the 
properties of these functions, which have a deep and long history.

Let $n\in\N_0$. The Gegenbauer (ultraspherical) polynomial which is an important
specialization of the Jacobi polynomial for
symmetric parameters values, is given
in terms of a terminating Gauss hypergeometric series
\cite[\href{http://dlmf.nist.gov/18.7.E1}{(18.7.1)}]{NIST:DLMF}
\begin{equation}\label{GegJac}
\hspace{0.0cm}C_n^\mu(z)=\frac{(2\mu)_n}{(\mu+\frac12)_n}
P_n^{(\mu-\frac12,\mu-\frac12)}(z)
=\frac{(2\mu)_n}
{n!}
\hyp21{-n,2\mu+n}{\mu+\frac12}{\frac{1-z}{2}}.
\end{equation}
Note that the ultraspherical polynomials
satisfy the following parity relation
\cite[\href{http://dlmf.nist.gov/18.6.T1}{Table 18.6.1}]{NIST:DLMF}
\begin{equation}
C_n^\mu(-z)=(-1)^n C_n^\mu(z).
\label{parGeg}
\end{equation}
Gegenbauer functions which generalize ultraspherical polynomials
with arbitrary degrees $n=\lambda\in\CC$ are solutions $w=w(z)=w_\lambda^\mu(z)$ to the 
Gegenbauer differential equation 
\cite[\href{http://dlmf.nist.gov/18.8.T1}{Table 18.8.1}]{NIST:DLMF}
\begin{equation}
\hspace{0.0cm}(z^2\!-\!1)\frac{\dd^2w(z)}{\dd z^2}+\left(2\lambda+1\right)z\frac{\dd w(z)}{\dd z}
-\lambda(\lambda+2\mu)w(z)=0.
\label{Gende}
\end{equation}
There are two linearly independent solutions to this second order ordinary differential equation which are referred to as Gegenbauer functions of the first and second  kind $C_\lambda^\mu(z)$, $D_\lambda^\mu(z)$.
A closely connected differential equation to the Gegenbauer differential equation \eqref{Gende} is the associated Legendre differential equation which is given by  \cite[\href{http://dlmf.nist.gov/14.2.E1}{(14.2.1)}]{NIST:DLMF}
\begin{equation}
\hspace{0.0cm}(1\!-\!z^2)\frac{\dd^2w(z)}{\dd z^2}-2z\frac{\dd w(z)}{\dd z}
+\left(\nu(\nu+1)-\frac{\mu^2}{1-z^2}\right)w(z)=0.
\label{Legde}
\end{equation}
Two linearly independent solutions to this equation are referred to as associated Legendre functions of the first and second kind $P_\nu^\mu(z)$, $Q_\nu^\mu(z)$. In the following subsection we will present the definitions of these important functions which are Gauss hypergeometric functions which satisfy a quadratic transformation.
\subsubsection{Hypergeometric representations of the Gegenbauer and associated Legendre functions}

The Gegenbauer function of the first kind is defined by \cite[\href{http://dlmf.nist.gov/15.9.E15}{(15.9.15)}]{NIST:DLMF}
\begin{equation}\label{Gegenbauerfuncdef}
\hspace{0.0cm}C_\lambda^\mu(z):=
\frac{\sqrt{\pi}\,\Gamma(\lambda+2\mu)}
{2^{2\mu-1}\Gamma(\mu)\Gamma(\lambda+1)}
\Ohyp21{-\lambda,2\mu+\lambda}{\mu+\frac12}{\frac{1-z}{2}},
\end{equation}
where $\lambda+2\mu\not\in-\N_0$.
{It is a} clear extension of the Gegenbauer
polynomial when the index is allowed to be 
a complex number as well as a non-negative integer.
Two representations
which will be useful for us in comparing to
the Jacobi function of the second kind are referred to as 
Gegenbauer functions of the second kind which have hypergeometric representations given 
with $\lambda+2\mu\not\in-\N_0$, \cite[(2.3)]{DurandFishSim}
\begin{eqnarray}
&&\hspace{-4.7cm}D_\lambda^\mu(z)
:={\frac{\expe^{i\pi\mu}
\Gamma(\lambda+2\mu)}{\Gamma(\mu)(2z)^{\lambda+2\mu}}\Ohyp21{\frac12\lambda+\mu,
\frac12\lambda+\mu+\frac12}{\lambda+\mu+1}{\frac{1}{z^2}}}
\\[0.05cm]
&&\hspace{-4.6cm}\hspace{1.1cm}
=
\frac{\expe^{i\pi\mu}2^{\lambda}\Gamma(\lambda+\mu+\frac12)
\Gamma(\lambda+2\mu)}{\sqrt{\pi}\,\Gamma(\mu)(z\!-\!1)^{\lambda+\mu+\frac12}
(z\!+\!1)^{\mu-\frac12}}\Ohyp21{\lambda\!+\!1,\lambda\!+\!\mu\!+\!\frac12}
{2\lambda\!+\!2\mu\!+\!1}{\frac{2}{1\!-\!z}},
\label{GegDhyper}
\end{eqnarray}
and in the second representation 
$\lambda+\mu+\frac12\not\in-\N_0$.
The equality of these two representations of the Gegenbauer function of the second kind follow from a quadratic transformation of the Gauss hypergeometric function from Group 3 to Group 1 in \cite[\href{http://dlmf.nist.gov/15.8.T1}{Table 15.8.1}]{NIST:DLMF}.
The associated Legendre function 
of the first kind  is defined as \cite[\href{http://dlmf.nist.gov/14.3.E6}{(14.3.6)} and \href{http://dlmf.nist.gov/14.21.i}{\S 14.21(i)}]{NIST:DLMF}
\begin{equation}
\hspace{0.2cm}P_\nu^\mu(z):=
\left(\frac{z+1}{z-1}\right)^{\frac12\mu}
\Ohyp21{-\nu,\nu+1}{1-\mu}{\frac{1-z}{2}},
\label{associatedLegendrefunctionP}
\end{equation}
where $|1-z|<2$, and elsewhere in $z$ by analytic continuation.
The associated Legendre function of the second kind
$Q_\nu^\mu:\mathbb C\setminus(-\infty,1]\to\mathbb C,$ $\nu+\mu\notin-\mathbb N$, has the following two single Gauss hypergeometric function representations 
\cite[\href{http://dlmf.nist.gov/14.3.E7}{(14.3.7)} and \href{http://dlmf.nist.gov/14.21}{\S 14.21}]{NIST:DLMF}, \cite[entry 24, p.~161]{MOS}, 
\begin{eqnarray}\label{for:lf2k}
&&\hspace{-3.0cm}\hspace{0.2cm}Q_\nu^\mu(z):=\frac{\sqrt{\pi}\,\expe^{i\pi\mu}\Gamma(\nu+\mu+1)
(z^2-1)^{\frac12\mu}}{2^{\nu+1}
z^{\nu+\mu+1}}\Ohyp21{\frac{\nu+\mu+1}{2},
\frac{\nu+\mu+2}{2}}{\nu+\frac32}{\frac{1}{z^2}}
\\[0.05cm]
&&\hspace{-2.7cm}\hspace{1.1cm}
=
\frac{2^\nu \expe^{i\pi\mu}\Gamma(\nu+1)\Gamma(\nu+\mu+1) (z+1)^{\frac12\mu}}
{(z-1)^{\frac12\mu+\nu+1}}
\Ohyp21{
\nu+1,\nu+\mu+1}{
2\nu+2}{\frac{2}{1-z}},
\label{Qdefntwodivide1mz}
\end{eqnarray}
and for the second representation, $\nu\not\in-\N$.
The first and second single Gauss hypergeometric representations are convergent as a Gauss hypergeometric series for
$|z|>1$, respectively $|z-1|>2$, and elsewhere in $z\in\C\setminus(-\infty,1]$ by analytic continuation
of the Gauss hypergeometric function.
\begin{rem}
The relations between the 
associated Legendre functions of the first and second kind 
are related
to the Gegenbauer functions of the first and second kind by \cite[\href{http://dlmf.nist.gov/14.3.E22}{(14.3.22)}]{NIST:DLMF}
\begin{eqnarray}
&&\hspace{-5.9cm}P_\nu^{\mu}(z)=\frac{\Gamma(\frac12-\mu)\Gamma(\nu+\mu+1)}
{2^\mu\sqrt{\pi}\,\Gamma(\nu-\mu+1)(z^2-1)^{\frac12\mu}}
C_{\nu+\mu}^{\frac12-\mu}(z),
\label{FerPtoGegC}\\
&&\hspace{-5.9cm}Q_\nu^\mu(z)=\frac{\expe^{2\pi i(\mu-\frac14)}\sqrt{\pi}\,\Gamma(\frac12-\mu)\Gamma(\nu+\mu+1)}{2^\mu\Gamma(\nu-\mu+1)(z^2-1)^{\frac12\mu}}
D_{\nu+\mu}^{\frac12-\mu}(z),
\end{eqnarray}
which are valid for $\mu\in\C\setminus\{\frac12,\frac32,\ldots\}$, $\nu+\mu\in\C\setminus-\N$.
Equivalently, the inverse relationships are given by 
\begin{eqnarray}
&&\hspace{-5.3cm} \label{ClammuFerPb}
C_\lambda^\mu(z)=\dfrac{\sqrt{\pi}\,\Gamma(\lambda+2\mu)}
{2^{\mu-\frac12}\Gamma(\mu)\Gamma(\lambda+1)(z^2-1)^{\frac{\mu}{2}-\frac14}}
P_{\lambda+\mu-\frac12}^{\frac12-\mu}(z),\\
&&\hspace{-5.3cm}D_\lambda^\mu(z)=\frac{\expe^{2\pi i(\mu-\frac14)}\Gamma(\lambda+2\mu)}
{\sqrt{\pi}\,2^{\mu-\frac12}\Gamma(\mu)\Gamma(\lambda+1)
(z^2-1)^{\frac12\mu-\frac14}}Q_{\lambda+\mu-\frac12}^{\frac12-\mu}(z),
\label{DQrel}
\end{eqnarray}
which are valid for all $\lambda+2\mu\in\C\setminus-\N_0$.
\end{rem}

\begin{rem}
By comparing Gauss hypergeometric representations of the various functions, one may express ${}_2\bm{F}_1(a,a+\frac12;c;z)$ in terms of associated Legendre functions of the first and second kind $P_\nu^\mu$, $Q_\nu^\mu$ and the Gegenbauer functions of the first and second kind $C_\nu^\mu$, $D_\nu^\mu$ using the following very useful formulas. Let
$z\in\mathbb C\setminus[1,\infty)$. Then
\begin{eqnarray}
&&\hspace{-4.4cm}\Ohyp21{a,a+\frac12}{c}{z}=
2^{c-1}z^{\frac12(1-c)}(1-z)^{\frac12c-a-\frac12}P_{2a-c}^{1-c}
\left(\frac{1}{\sqrt{1-z}}\right)\\
&&\hspace{-1.32cm}=\frac{2^{2c-2}\Gamma(c-\frac12)\Gamma(2 (a-c+1))}
{\sqrt{\pi}\,\Gamma(2a)(1-z)^a}C_{2a-2c+1}^{c-\frac12}
\left(\frac{1}{\sqrt{1-z}}\right),
\end{eqnarray}
where $2c\not\in\{1,-1,-3,\ldots\}$, $2a-2c\not\in\{-2,-3,\ldots\}$, and
\begin{eqnarray}
&&\hspace{-3.7cm}\Ohyp21{a,a+\frac12}{c}{z}=
\frac{\expe^{i\pi(c-2a-\frac12)}2^{c-\frac12}(1-z)^{\frac12c-a-\frac14}}
{\sqrt{\pi}\,\Gamma(2a)z^{\frac12c-\frac14}}Q_{c-\frac32}^{2a-c+\frac12}
\left(\frac{1}{\sqrt{z}}\right)\\
&&\hspace{-2.35cm}\hspace{1.75cm}={\frac{\expe^{i\pi(2a-c)}2^{2c-2a-1}
\Gamma(c-2a)(1-z)^{c-2a-\frac12}}{\Gamma(2c-2a-1)z^{c-a-\frac12}}
D_{2a-1}^{c-2a}\left(\frac{1}{\sqrt{z}}\right),}
\end{eqnarray}
where $c,c-2a\not\in-\mathbb N_0$.
\end{rem}

\subsubsection{The Gegenbauer functions on-the-cut $(-1,1)$ and the Ferrers Functions}

We will consider Jacobi functions of the second kind on-the-cut
in Section \ref{Jac2cut}.
As we will see, for certain combinations
of the parameters which we will describe below, the Jacobi functions 
of the first and second kind on the cut are related to the 
the Gegenbauer functions of the first and second kind on-the-cut and the associated Legendre 
functions of the first and second kind on-the-cut (Ferrers functions).

\medskip
The Gegenbauer functions of the first and second kind 
on-the-cut are defined in terms of the Gegenbauer functions immediately above and below the segment $(-1,1)$ in the complex plane. These definition are given by
\cite[(3.3), (3.4)]{Durand78}
\begin{eqnarray}
\label{GegCcutdef}
&&\hspace{-4.6cm}{\sf C}_\lambda^\mu(x):=D_\lambda^\mu(x+i0)+\expe^{-2\pi i\mu}D_\lambda^\mu(x-i0)=C_\lambda^\mu(x\pm i0),\quad x\in(-1,1]\\
&&\hspace{-4.6cm}
{\sf D}_\lambda^\mu(x):=-iD_\lambda^\mu(x+i0)+i\expe^{-2\pi i\mu}D_\lambda^\mu(x-i0),\quad x\in(-1,1).\label{GegDcutdef}
\end{eqnarray}
Note that ${\sf C}_\lambda^\mu(x)$ and ${\sf D}_\lambda^\mu$ are real for real values of $\lambda$ and $\mu$.

\medskip

The Ferrers functions of the first and second kind are defined
as  \cite[\href{http://dlmf.nist.gov/14.23.E1}{(14.23.1)}, \href{http://dlmf.nist.gov/14.23.E2}{(14.23.2)}]{NIST:DLMF}
\begin{eqnarray}
&&\hspace{-3.1cm}{\sf P}_\nu^\mu(x):=\expe^{\pm i\pi\mu}P_\nu^\mu(x\pm i0)=\frac{i\expe^{-i\pi\mu}}{\pi}\left(\expe^{-\frac12 i\pi\mu}Q_\nu^\mu(x+i0)-\expe^{\frac12 i\pi\mu}Q_\nu^\mu(x-i0)\right),
\label{FerrersPdef}\\
&&\hspace{-3.1cm}{\sf Q}_\nu^\mu(x):=\frac{\expe^{-i\pi\mu}}{2}\left(\expe^{-\frac12 i\pi\mu}Q_\nu^\mu(x+i0)+\expe^{\frac12 i\pi\mu}Q_\nu^\mu(x-i0)\right).
\label{FerrersQdef}
\end{eqnarray}

Using the above definition one can readily obtain a single hypergeometric representation of the Gegenbauer function of the first kind on-the-cut, namely
\begin{equation}\label{Gegenbauerfunccutdef}
\hspace{0.3cm}{\sf C}_\lambda^\mu(x)=
\frac{\sqrt{\pi}\,\Gamma(2\mu+\lambda)}
{2^{2\mu-1}\Gamma(\mu)\Gamma(\lambda+1)}
\Ohyp21{-\lambda,2\mu+\lambda}{\mu+\frac12}{\frac{1-x}{2}},
\end{equation}
which is identical to the Gegenbauer function of the first kind \eqref{Gegenbauerfuncdef} because this function analytically continues to the segment $(-1,1)$, see \eqref{GegCcutdef}.
For the Gegenbauer function of the second kind on-the-cut, one can readily obtain a double hypergeometric representation of by using the definition \eqref{GegDcutdef} and then using the interrelation between the Gegenbauer function of the second kind and the Legendre function of the second kind and then comparing to the Ferrers function of the second kind through its definition
\eqref{FerrersQdef}.

However, first we will give hypergeometric representations of the Ferrers function
of the first and second kind which are easily found in the literature. The first author recently co-authored a paper with Park and Volkmer where all double hypergeometric representations of the Ferrers function of the second kind were computed \cite{Cohletal2021}.
Using \eqref{FerrersPdef} one can derive hypergeometric representations of the Ferrers function of the first kind
(associated Legendre function of the first kind on-the-cut)
$\mathsf{P}_\nu^\mu:(-1,1)\to\mathbb C$. For instance,
one has a single hypergeometric representation given by 
\cite[\href{http://dlmf.nist.gov/14.3.E1}{(14.3.1)}]{NIST:DLMF}
\begin{equation}
\hspace{0.3cm}{\sf P}_\nu^\mu(x)=
\left(\frac{1+x}{1-x}\right)^{\frac12\mu}
\Ohyp21{-\nu,\nu+1}{1-\mu}{\frac{1\!-\!x}{2}}.
\label{FerrersPdefnGauss2F1}
\end{equation}
Let $\nu \in \C$, $\mu \in \C \setminus \Z$, $\nu+\mu\not\in-\N$, then a double hypergeometric representation of the Ferrers function of the second kind is given by
\cite[\href{http://dlmf.nist.gov/14.3.E2}{(14.3.2)}]{NIST:DLMF}
\begin{eqnarray}
&&\hspace{-3.4cm}{\sf Q}_\nu^\mu (x) = \frac{\pi}{2 \sin(\pi\mu)} \Bigg( \cos(\pi\mu)
{\left( \frac{1+x}{1-x} \right)}^{{\frac12\mu}}
\Ohyp21{-\nu, \nu+1}{1 - \mu}{\frac{1-x}{2}} \nonumber\\
&&\hspace{-3.0cm}\hspace{3cm}- \frac{\Gamma(\nu+\mu+1)}{\Gamma(\nu-\mu+1)}
{\left( \frac{1-x}{1+x} \right)}^{{\frac12\mu}}
\Ohyp21{-\nu, \nu+1}{1+\mu}{\frac{1-x}{2}} \Bigg).
\label{QcuthyprepA}
\end{eqnarray}

\begin{lem}
Let $x\in\C\setminus((-\infty,1]\cup(1,\infty))$, $\lambda,\nu,\mu\in\C$. Then
\begin{eqnarray}
&&\hspace{-5cm}{\sf D}_\lambda^\mu(x)=
\frac{\Gamma(\lambda+2\mu)}{2^{\mu-\frac32}\sqrt{\pi}\,\Gamma(\mu)\Gamma(\lambda+1)(1-x^2)^{\frac12\mu-\frac14}}
{\sf Q}_{\lambda+\mu
-\frac12}^{\frac12-\mu}(x),\label{relDQcut},
\end{eqnarray}
such that $\lambda+2\mu\not\in-\N_0$ and
\begin{eqnarray}
&&\hspace{-6.7cm}{\sf Q}_\nu^\mu(x)=\frac{\sqrt{\pi}\,\Gamma(\frac12-\mu)\Gamma(\nu+\mu+1)}
{2^{\mu+1}\Gamma(\nu-\mu+1)(1-x^2)^{\frac12\mu}}
{\sf D}_{\nu+\mu}^{\frac12-\mu}(x),
\label{relQDcut}
\end{eqnarray}
such that $\mu\not\in\{\frac12,\frac32,\ldots\}$ and $\nu+\mu\not\in-\N$.
\end{lem}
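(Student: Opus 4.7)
The plan is to derive both \eqref{relDQcut} and \eqref{relQDcut} from their off-the-cut counterparts \eqref{DQrel} (equivalently its inverse in $Q$) by passing to the boundary values $x\pm i0$ and then combining them in the precise linear combinations dictated by the definitions \eqref{GegDcutdef} and \eqref{FerrersQdef}. Concretely, I will start from \eqref{DQrel}, solve it for $Q_{\lambda+\mu-\frac12}^{\frac12-\mu}(z)$ (equivalently, relabel the parameters as $\nu=\lambda+\mu-\tfrac12$, $\tilde\mu=\tfrac12-\mu$ to obtain an identity of the shape $Q_\nu^{\tilde\mu}(z)=K\,(z^2-1)^{-\tilde\mu/2}D_{\nu+\tilde\mu}^{\frac12-\tilde\mu}(z)$ for an explicit constant $K$), then specialize to $z=x\pm i0$ for $x\in(-1,1)$.

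The key branch-cut computation is the following: with the convention $(z^2-1)^\alpha:=(z+1)^\alpha(z-1)^\alpha$ adopted in the preliminaries, the factor $z+1$ stays positive across the cut, so only $(z-1)^\alpha$ picks up phase. Approaching from above, $(z-1)^\alpha\big|_{z=x+i0}=\expe^{i\pi\alpha}(1-x)^\alpha$, and from below, $(z-1)^\alpha\big|_{z=x-i0}=\expe^{-i\pi\alpha}(1-x)^\alpha$. Hence
\[
(z^2-1)^\alpha\big|_{z=x\pm i0}=\expe^{\pm i\pi\alpha}(1-x^2)^\alpha,
\]
which is the only source of phase, since $D_\lambda^\mu$ and $Q_\nu^\mu$ themselves are just analytic from each side of the cut. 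Inserting $\alpha=-\tilde\mu/2$ (or $\alpha=\mu/2-\tfrac14$ for \eqref{relDQcut}) converts the off-the-cut identity into a pair of boundary identities in which the only nontrivial $\pm$-dependence sits in explicit exponential factors.

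Next I will substitute these two boundary identities into the defining combinations
\[
\mathsf D_\lambda^\mu(x)=-iD_\lambda^\mu(x+i0)+i\expe^{-2\pi i\mu}D_\lambda^\mu(x-i0),\qquad
\mathsf Q_\nu^\mu(x)=\tfrac12\expe^{-i\pi\mu}\!\left(\expe^{-\frac12 i\pi\mu}Q_\nu^\mu(x+i0)+\expe^{\frac12 i\pi\mu}Q_\nu^\mu(x-i0)\right),
\]
and re-recognize the surviving linear combination on the right-hand side as, respectively, $\mathsf Q$ or $\mathsf D$ via the same defining formulas with the shifted parameters $(\nu,\mu)\mapsto(\lambda+\mu-\tfrac12,\tfrac12-\mu)$ (and vice versa). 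For \eqref{relQDcut}, for instance, one checks that the combination $\expe^{-i\pi\mu}D_{\nu+\mu}^{\frac12-\mu}(x+i0)+\expe^{i\pi\mu}D_{\nu+\mu}^{\frac12-\mu}(x-i0)$ that emerges coincides, up to an overall $i\expe^{-i\pi\mu}$, with $\mathsf D_{\nu+\mu}^{\frac12-\mu}(x)$ read off from \eqref{GegDcutdef} at parameter $\tfrac12-\mu$. A symmetric calculation starting from \eqref{DQrel} directly gives \eqref{relDQcut}.

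The main obstacle is purely bookkeeping: four different sources of phase have to cancel in just the right way, namely (i) the $\expe^{2\pi i(\mu-\frac14)}$ from \eqref{DQrel}, (ii) the $\expe^{\pm i\pi\alpha}$ from the branch of $(z^2-1)^\alpha$, (iii) the $\expe^{-2\pi i\mu}$ inside \eqref{GegDcutdef}, and (iv) the $\expe^{\pm \frac12 i\pi\mu}\expe^{-i\pi\mu}$ inside \eqref{FerrersQdef}. I would organize this by recording the two boundary identities as rows of a $2\times2$ system, so that the Ferrers-type combination acts as a row operation that turns the pair $(D(x+i0),D(x-i0))$ into $\mathsf D$; the gamma prefactor then simplifies by means of the parameter shift $\lambda\leftrightarrow\nu+\mu$, $\mu\leftrightarrow\tfrac12-\mu$ (which sends $\lambda+2\mu\mapsto\nu-\mu+1$ and $\mu/2-1/4\mapsto -\mu/2$), and the factor $1/\sqrt{\pi}$ versus $\sqrt{\pi}\,\Gamma(\tfrac12-\mu)$ appears via Euler's reflection as usual. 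The stated parameter restrictions $\lambda+2\mu\notin-\mathbb N_0$ and $\mu\notin\{\tfrac12,\tfrac32,\ldots\}$, $\nu+\mu\notin-\mathbb N$ are exactly what is needed to ensure the gamma-function prefactors and the hypergeometric series underlying $Q$ and $D$ are well-defined throughout the argument.
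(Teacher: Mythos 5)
Your proposal is correct and takes essentially the same route as the paper's (very terse) proof: combine the boundary-value definitions \eqref{GegDcutdef} and \eqref{FerrersQdef} with the off-cut interrelation \eqref{DQrel}, tracking the phase $(z^2-1)^\alpha\big|_{x\pm i0}=\expe^{\pm i\pi\alpha}(1-x^2)^\alpha$, and your phase bookkeeping checks out. The only tiny inaccuracy is the remark about Euler's reflection formula, which is not actually needed, since the gamma prefactors match directly under the relabeling $\nu=\lambda+\mu-\tfrac12$, $\mu\mapsto\tfrac12-\mu$.
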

\begin{proof}
Start with the definition \eqref{GegDcutdef} and use the interrelation between the Gegenbauer function of the second kind on-the-cut and the Ferrers function of the second kind 
\eqref{DQrel}. Then applying this relation to the double hypergeometric representation 
completes the proof.
\end{proof}

\begin{thm}Let $x\in\C\setminus((-\infty,1]\cup(1,\infty))$, $\lambda,\mu\in\C$, such that $\lambda+2\mu\not\in-\N_0$. Then
\begin{eqnarray}
&&\hspace{-0.5cm}{\sf D}_\lambda^\mu(x)=\frac{\sqrt{\pi}}{\cos(\pi\mu)2^{\mu-\frac12}\Gamma(\mu)}
\Biggl(\frac{\sin(\pi\mu)\Gamma(\lambda+2\mu)}{\Gamma(\lambda+1)(1+x)^{\mu-\frac12}}\Ohyp21{\lambda+\mu+\frac12,\frac12-\lambda-\mu}{\frac12+\mu}{\frac{1-x}{2}}\nonumber\\
&&\hspace{6cm}-\frac{1}{(1-x)^{\mu-\frac12}}\Ohyp21{\lambda+\mu+\frac12,\frac12-\lambda-\mu}{\frac32-\mu}{\frac{1-x}{2}}\Biggr).
\end{eqnarray}
\end{thm}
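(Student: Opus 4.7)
The plan is to combine the relation \eqref{relDQcut} from the immediately preceding lemma with the double hypergeometric representation \eqref{QcuthyprepA} of the Ferrers function of the second kind, and then simplify the resulting expression by collecting the prefactors involving powers of $1\pm x$. The hypothesis $\lambda+2\mu\not\in-\N_0$ ensures that the application of \eqref{relDQcut} is valid.

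First I would substitute $\nu\mapsto\lambda+\mu-\frac12$ and $\mu\mapsto\frac12-\mu$ into \eqref{QcuthyprepA}. This yields the key parameter identifications
\[
-\nu=\tfrac12-\lambda-\mu,\quad \nu+1=\lambda+\mu+\tfrac12,\quad 1-\mu'=\tfrac12+\mu,\quad 1+\mu'=\tfrac32-\mu,
\]
together with $\nu+\mu'+1=\lambda+1$, $\nu-\mu'+1=\lambda+2\mu$, $\cos(\pi\mu')=\sin(\pi\mu)$, $\sin(\pi\mu')=\cos(\pi\mu)$. These immediately produce the two Gauss hypergeometric functions with upper parameters $\lambda+\mu+\frac12,\;\frac12-\lambda-\mu$ and lower parameters $\frac12+\mu$ and $\frac32-\mu$ respectively, as well as the gamma factor $\Gamma(\lambda+1)/\Gamma(\lambda+2\mu)$ that appears in the second term of the bracket.

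Next I would insert this transformed expression into \eqref{relDQcut}. The main bookkeeping step is the simplification of the $(1-x^2)^{\frac12\mu-\frac14}$ denominator against the two factors $(\frac{1+x}{1-x})^{\frac14-\frac12\mu}$ and $(\frac{1-x}{1+x})^{\frac14-\frac12\mu}$ that arise from the $(1\pm x)$ powers in \eqref{QcuthyprepA}. A short calculation gives
\[
\frac{1}{(1-x^2)^{\frac12\mu-\frac14}}\Bigl(\tfrac{1+x}{1-x}\Bigr)^{\frac14-\frac12\mu}=\frac{1}{(1+x)^{\mu-\frac12}},\qquad
\frac{1}{(1-x^2)^{\frac12\mu-\frac14}}\Bigl(\tfrac{1-x}{1+x}\Bigr)^{\frac14-\frac12\mu}=\frac{1}{(1-x)^{\mu-\frac12}},
\]
which produce the precise prefactors appearing in the claimed formula.

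Finally I would combine the overall constants: the $\pi/(2\cos(\pi\mu))$ from \eqref{QcuthyprepA} (after the swap $\cos\pi\mu'=\sin\pi\mu$) multiplied by $\Gamma(\lambda+2\mu)/(2^{\mu-\frac32}\sqrt{\pi}\,\Gamma(\mu)\Gamma(\lambda+1))$ collapses to the common prefactor $\sqrt{\pi}/(\cos(\pi\mu)2^{\mu-\frac12}\Gamma(\mu))$. In the second term the gamma ratio $\Gamma(\lambda+2\mu)/\Gamma(\lambda+1)$ cancels with its reciprocal coming from \eqref{QcuthyprepA}, leaving the bare prefactor, exactly as stated. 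The only genuinely tricky point is keeping the branch choices of the fractional powers consistent on $\C\setminus((-\infty,1]\cup(1,\infty))$, which is why the domain hypothesis on $x$ is required; all other steps are purely algebraic substitution.
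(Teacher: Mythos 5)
Your proposal is correct and follows essentially the same route as the paper: the paper's proof goes through the definition \eqref{GegDcutdef} and the interrelation \eqref{DQrel} — which is exactly the content of the preceding lemma \eqref{relDQcut} that you invoke — and then compares with the Ferrers representation \eqref{QcuthyprepA}. Your explicit substitution $\nu\mapsto\lambda+\mu-\frac12$, $\mu\mapsto\frac12-\mu$, the trigonometric swaps, the cancellation of $\Gamma(\lambda+2\mu)/\Gamma(\lambda+1)$ in the second term, and the simplification of the $(1\pm x)$ powers all check out and match the stated formula.
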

\begin{proof}
Start with the definition \eqref{GegDcutdef} and use the interrelation between the Gegenbauer function of the second kind and the Legendre function of the second kind 
\eqref{DQrel}. Then comparing with the double hypergeometric representation given by \eqref{QcuthyprepA} completes the proof. 
\end{proof}

Note that we also have 
interrelation between 
the Ferrers function of the first kind 
and the Gegenbauer function of the first kind on-the-cut 
\cite[\href{http://dlmf.nist.gov/14.3.E21}{(14.3.21)}]{NIST:DLMF}
\begin{eqnarray} \label{relFerPGeg}
\hspace{-7.5cm}
&&{\sf P}_\nu^{\mu}(x)=\frac{\Gamma(\frac12-\mu)\Gamma(\nu+\mu+1)}
{2^\mu\sqrt{\pi}\,\Gamma(\nu-\mu+1)(1-x^2)^{\frac12\mu}}
{\sf C}_{\nu+\mu}^{\frac12-\mu}(x),
\end{eqnarray}
where $\mu\not\in\{\frac12,\frac32,\ldots\}$, $\nu+\mu\not\in- \mathbb N$,
or 
equivalently 
\begin{eqnarray} \label{ClammuFerP}
\hspace{-7.0cm}{\sf C}_\lambda^\mu(x)=
\dfrac{\sqrt{\pi}\,\Gamma(\lambda+2\mu)}
{2^{\mu-\frac12}\Gamma(\mu)\Gamma(\lambda+1)(1-x^2)^{\frac{1}{2}\mu-\frac14}}
{\sf P}_{\lambda+\mu-\frac12}^{\frac12-\mu}(x).
\end{eqnarray}
Finally we should add that the Legendre polynomial (the associated Legendre 
function of the first kind $P_\nu^\mu$ and the Ferrers 
function of the first kind ${\sf P}_\nu^\mu$ with $\mu=0$ 
and $\nu=n\in\mathbb Z$) is given by \cite[\href{http://dlmf.nist.gov/18.7.E9}{(18.7.9)}]{NIST:DLMF}
\[
\hspace{0.45cm}P_n(x):=P_n^0(x)={\sf P}_n^0(x)=C_n^{\frac12}(x)=P_n^{(0,0)}(x),
\]
which vanishes for $n$ negative.

\subsection{Brief introduction to Jacobi functions of the first and second kind}
Now we will discuss fundamental properties and special 
values and limits for the Jacobi functions.
Jacobi functions are
complex solutions $w=w(z)=w_\gamma^{(\alpha,\beta)}(z)$ 
to the Jacobi differential equation
\cite[\href{http://dlmf.nist.gov/18.8.T1}{Table 18.8.1}]{NIST:DLMF}
\begin{equation}
(1-z^2)\frac{\dd^2w}{\dd z^2}+\left(\beta-\alpha-z(\alpha+\beta+2)\right)
\frac{\dd w}{\dd z}
+\gamma(\alpha+\beta+\gamma+1)w=0,
\label{Jacde}
\end{equation}
which is a second order linear homogeneous differential equation.
Solutions to this differential equation satisfy the following three-term recurrence relation \cite[(10.8.11), p.~169]{ErdelyiHTFII}
\begin{equation}
B_\gamma^{(\alpha,\beta)}w_{\gamma}^{(\alpha,\beta)}(z)+A_\gamma^{(\alpha,\beta)} (z)w_{\gamma+1}^{(\alpha,\beta)}(z)+w_{\gamma+2}^{(\alpha,\beta)}(z)=0,
\label{ttrl}
\end{equation}
where
\begin{eqnarray}
&&\hspace{-3.0cm}A_\gamma^{(\alpha,\beta)}(z)=-\frac{(\alpha+\beta+2\gamma+3)\left(\alpha^2-\beta^2+
(\alpha+\beta+2\gamma+2)
(\alpha+\beta+2\gamma+4)
z\right)}
{2(\gamma+2)(\alpha+\beta+\gamma+2)(\alpha+\beta+2\gamma+2)},\\
&&\hspace{-3.0cm}B_\gamma^{(\alpha,\beta)}=\frac{(\alpha+\gamma+1)(\beta+\gamma+1)(\alpha+\beta+2\gamma+4)}
{(\gamma+2)(\alpha+\beta+\gamma+2)(\alpha+\beta+2\gamma+2)}.
\end{eqnarray}
This three-term recurrence relation is very useful for deriving various solutions to \eqref{Jacde} when solutions are known for values which have integer separations.

\subsubsection{The Jacobi function of the first kind}
{The Jacobi function of the first kind
is a generalization of the Jacobi polynomial (as given by \eqref{Jacobipolydef}) where 
the degree is no longer restricted to be an integer. 
In the following material we derive properties 
for the Jacobi function of the first kind.}
{In the following result we present the four 
single Gauss hypergeometric function representations 
of the Jacobi function of the first kind.}

\begin{thm}
\label{Firstthm}
Let
$\alpha,\beta,\gamma\in\mathbb C$ such that
$\alpha+\gamma\not\in-\mathbb N$. Then, 
the Jacobi function of the first kind 
$P_\gamma^{(\alpha,\beta)}:\mathbb C
\setminus(-\infty,-1]\to\mathbb C$ 
can be defined by
\begin{eqnarray}
&&\hspace{-1.8cm}P_\gamma^{(\alpha,\beta)}(z)=
\frac{\Gamma(\alpha+\gamma+1)}{
\Gamma(\gamma+1)}
\Ohyp21{-\gamma,\alpha+\beta+\gamma+1}
{\alpha+1}{\frac{1-z}{2}}
\label{Jac1}\\
&&\hspace{-0.2cm}=\frac{\Gamma(\alpha+\gamma+1)}{
\Gamma(\gamma+1)}
\left(\frac{2}{z+1}\right)^\beta\Ohyp21
{-\beta-\gamma,\alpha+\gamma+1}{\alpha+1}
{\frac{1-z}{2}}\label{Jac2}\\
&&\hspace{-0.2cm}=\frac{\Gamma(\alpha+\gamma+1)}
{
\Gamma(\gamma+1)}
\left(\frac{z+1}{2}\right)^\gamma\Ohyp21
{-\gamma,-\beta-\gamma}
{\alpha+1}{\frac{z-1}{z+1}}\label{Jac3}\\
&&\hspace{-0.2cm}=\frac{\Gamma(\alpha+\gamma+1)}
{
\Gamma(\gamma+1)}
\left(\frac{2}{z+1}\right)^{\alpha+\beta+\gamma+1}\Ohyp21
{\alpha+\gamma+1,\alpha+\beta+\gamma+1}{\alpha+1}
{\frac{z-1}{z+1}}.\label{Jac4}
\end{eqnarray}
\label{bigPthm}
\end{thm}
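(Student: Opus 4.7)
The plan is to adopt \eqref{Jac1} as the definition of $P_\gamma^{(\alpha,\beta)}(z)$ and derive the remaining three representations \eqref{Jac2}, \eqref{Jac3}, \eqref{Jac4} by direct applications of Euler's linear transformation \eqref{Eulertran} and the two forms of Pfaff's linear transformation \eqref{Pfafftran} of the Gauss hypergeometric function, with the argument specialized to $(1-z)/2$. The Möbius change of variable $z\mapsto(1-z)/2$ sends $\mathbb C\setminus(-\infty,-1]$ to $\mathbb C\setminus[1,\infty)$, which is the natural analytic continuation domain for all three linear transformations; the factor $z+1$ stays away from the negative real axis, so the fractional-power factors that appear are unambiguously interpreted by the principal-branch convention fixed in the preliminaries.

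First, to obtain \eqref{Jac2}, I would apply Euler's transformation \eqref{Eulertran} with $a=-\gamma$, $b=\alpha+\beta+\gamma+1$, $c=\alpha+1$. Direct computation yields $c-a-b=-\beta$, $c-a=\alpha+\gamma+1$ and $c-b=-\beta-\gamma$, while the prefactor becomes
\[
\bigl(1-\tfrac{1-z}{2}\bigr)^{-\beta}=\bigl(\tfrac{z+1}{2}\bigr)^{-\beta}=\bigl(\tfrac{2}{z+1}\bigr)^{\beta}.
\]
Multiplying through by $\Gamma(\alpha+\gamma+1)/\Gamma(\gamma+1)$ produces \eqref{Jac2}.

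Next, for \eqref{Jac3}, I would apply the first form of Pfaff's transformation \eqref{Pfafftran} with the same parameter identification. The prefactor becomes $\bigl(1-\tfrac{1-z}{2}\bigr)^{\gamma}=\bigl(\tfrac{z+1}{2}\bigr)^{\gamma}$, the argument transforms as
\[
\frac{(1-z)/2}{(1-z)/2-1}=\frac{1-z}{-1-z}=\frac{z-1}{z+1},
\]
and the new top parameters are $a=-\gamma$ and $c-b=-\beta-\gamma$, delivering \eqref{Jac3}. Finally, \eqref{Jac4} follows in the same way from the second form of Pfaff's transformation \eqref{Pfafftran}, which contributes the prefactor $\bigl(1-\tfrac{1-z}{2}\bigr)^{-(\alpha+\beta+\gamma+1)}=(2/(z+1))^{\alpha+\beta+\gamma+1}$ and the top parameters $b=\alpha+\beta+\gamma+1$, $c-a=\alpha+\gamma+1$; equivalently, one can obtain \eqref{Jac4} by applying Euler's transformation in the variable $(z-1)/(z+1)$ to \eqref{Jac3}.

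The condition $\alpha+\gamma\notin-\mathbb N$ is required only to ensure that $\Gamma(\alpha+\gamma+1)$ is finite, and the Olver normalization renders every ${}_2\bm F_1$ on the right-hand sides entire in all parameters, so no exceptional cases in $c=\alpha+1$ or in the upper parameters need separate treatment; if $\gamma\in-\mathbb N$, both sides vanish automatically because $1/\Gamma(\gamma+1)=0$. I do not anticipate a substantive obstacle: the content of the theorem is essentially the bookkeeping of parameters and the correct identification of the branches of the fractional powers, which is routine once the transformations \eqref{Eulertran}--\eqref{Pfafftran} have been invoked.
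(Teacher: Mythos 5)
Your proposal is correct and follows essentially the same route as the paper: take \eqref{Jac1} as the definition (the paper phrases this as substituting $n\mapsto\gamma$ in the polynomial formula \eqref{Jacobipolydef}) and obtain \eqref{Jac2}--\eqref{Jac4} by Euler's transformation \eqref{Eulertran} and the two forms of Pfaff's transformation \eqref{Pfafftran}. Your parameter bookkeeping and branch/domain remarks are accurate and simply make explicit what the paper's terse proof leaves to the reader.
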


\begin{proof}
Start with \eqref{Jacobipolydef} and replace 
the shifted factorial by a ratio of gamma 
functions using \eqref{Pochdef},
the factorial $n!=\Gamma(n+1)$ and substitute 
$n\mapsto\gamma\in\mathbb C$, 
$x\mapsto z$. Application of Euler's transformation \eqref{Eulertran} and Pfaff's transformation \eqref{Pfafftran} provides the 
other three single hypergeometric representations. This completes the proof.
\end{proof}
There exist double Gauss hypergeometric 
representations of the Jacobi function of the 
first kind which can be obtained by using 
the linear transformation formulas for the Gauss hypergeometric function 
$z\mapsto z^{-1}$, $z\mapsto (1-z)^{-1}$, $z\mapsto 1-z$, $z\mapsto 1-z^{-1}$
\cite[\href{http://dlmf.nist.gov/14.3.E1}{(14.3.1)}--\href{http://dlmf.nist.gov/14.3.E5}{(14.3.5)}]{NIST:DLMF},
respectively. 
However, these in general 
will be given in terms of a sum of two Gauss 
hypergeometric functions. 
{We will} will not present {the} double hypergeometric representations of 
the Jacobi function of the first kind here.
\medskip

One has the following connection relation for the Jacobi function of the first kind.

\begin{cor}Let $\gamma,\alpha,\beta\in\CC$, $z\in\CC\setminus(-\infty,1]$, $\gamma\not\in-\N$, $\beta+\gamma\not\in\N_0$. Then
\begin{eqnarray}
&&\hspace{-6.6cm}P_{-\gamma-\alpha-\beta-1}^{(\alpha,\beta)}(z)=
\frac{\Gamma(-\beta-\gamma)\Gamma(\gamma+1)}{\Gamma(-\gamma-\alpha-\beta)
\Gamma(\alpha+\gamma+1)}P_\gamma^{(\alpha,\beta)}(z).\label{WolfP}
\end{eqnarray}
\end{cor}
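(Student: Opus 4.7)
The plan is to derive the identity by direct substitution into one of the four single–hypergeometric representations of the Jacobi function of the first kind given in Theorem \ref{Firstthm}, exploiting the symmetry of ${}_2\bm{F}_1$ in its first two upper parameters.

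First, I would pick the representation \eqref{Jac1}, namely
\[
P_\gamma^{(\alpha,\beta)}(z)=\frac{\Gamma(\alpha+\gamma+1)}{\Gamma(\gamma+1)}\,\Ohyp{2}{1}{-\gamma,\,\alpha+\beta+\gamma+1}{\alpha+1}{\frac{1-z}{2}},
\]
and perform the replacement $\gamma\mapsto -\gamma-\alpha-\beta-1$ throughout. Under this substitution, the two upper hypergeometric parameters become $\gamma+\alpha+\beta+1$ and $-\gamma$ respectively, while the lower parameter $\alpha+1$ is unchanged (this is the key cancellation that makes the identity work). The gamma prefactor transforms as $\Gamma(\alpha+\gamma+1)/\Gamma(\gamma+1)\mapsto \Gamma(-\beta-\gamma)/\Gamma(-\gamma-\alpha-\beta)$.

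Next, I would invoke the symmetry ${}_2\bm{F}_1(a,b;c;w)={}_2\bm{F}_1(b,a;c;w)$, which is immediate from the series definition \eqref{genhyp} and the fact that $(a)_k(b)_k=(b)_k(a)_k$. This identifies the hypergeometric factor in $P_{-\gamma-\alpha-\beta-1}^{(\alpha,\beta)}(z)$ with that in $P_\gamma^{(\alpha,\beta)}(z)$, so
\[
P_{-\gamma-\alpha-\beta-1}^{(\alpha,\beta)}(z)=\frac{\Gamma(-\beta-\gamma)}{\Gamma(-\gamma-\alpha-\beta)}\cdot\frac{\Gamma(\gamma+1)}{\Gamma(\alpha+\gamma+1)}\,P_\gamma^{(\alpha,\beta)}(z),
\]
which is precisely the claimed identity after regrouping the gamma ratios.

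There is essentially no obstacle: the argument is a one‑line substitution plus the trivial symmetry of the Gauss hypergeometric series. The only point worth flagging is bookkeeping of the parameter exclusions that guarantee both sides are well defined: the hypotheses $\gamma\notin-\mathbb N$, $\beta+\gamma\notin\mathbb N_0$, $\alpha+\gamma\notin-\mathbb N$ (the last being the standing assumption of Theorem \ref{Firstthm}) ensure the gamma factors $\Gamma(\gamma+1)$, $\Gamma(-\beta-\gamma)$, $\Gamma(-\gamma-\alpha-\beta)$, $\Gamma(\alpha+\gamma+1)$ are all finite, so the Olver-normalized representation \eqref{Jac1} applies both for parameter $\gamma$ and for parameter $-\gamma-\alpha-\beta-1$, and no boundary case needs to be treated separately.
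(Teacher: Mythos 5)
Your proposal is correct and coincides with the paper's own proof: both substitute $\gamma\mapsto-\gamma-\alpha-\beta-1$ into the representation \eqref{Jac1}, observe that the hypergeometric factor is unchanged (you make explicit the swap of the two upper parameters and the symmetry of ${}_2\bm{F}_1$, which the paper states more tersely), and compare the gamma prefactors. The bookkeeping of parameter restrictions is also handled correctly.
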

\begin{proof}
This connection relation
can be derived by using \eqref{Jac1} and making the replacement 
$\gamma\mapsto-\gamma-\alpha-\beta-1$ 
which 
leaves the parameters and argument of the hypergeometric function unchanged. Comparing the prefactors completes the proof. 
\end{proof}

\begin{rem}
One of the consequences of the definition of the 
Jacobi function of the first kind is the following
special value:
\begin{equation}
P_\gamma^{(\alpha,\beta)}(1)
=\frac{\Gamma(\alpha+\gamma+1)}
{\Gamma(\alpha+1)\Gamma(\gamma+1)},
\label{Jacone}
\end{equation}
where $\alpha+\gamma\not\in-\mathbb N$.
For $\gamma=n\in\mathbb Z$ one has
\begin{equation}
P_n^{(\alpha,\beta)}(1)=\frac{(\alpha+1)_n}{n!},\quad 
P_n^{(\alpha,\beta)}(-1)=(-1)^n\frac{(\beta+1)_n}{n!},
\end{equation}
which is consistent with \eqref{Jacone}
and the parity relation for Jacobi polynomials 
(see \cite[\href{http://dlmf.nist.gov/18.6.T1}{Table 18.6.1}]{NIST:DLMF}).
From \eqref{Jac1} we have
\begin{equation}
P_0^{(\alpha,\beta)}(z)=1,
\label{Pzero}
\end{equation}
and $P_k^{(\alpha,\beta)}(z)=0$ for all $k\in-\mathbb N$.
\end{rem}
\medskip
\subsubsection{The Jacobi function of the second kind}
The Jacobi function of the second kind $Q_\gamma^{(\alpha,\beta)}(z)$, $\gamma\in\CC$
is a generalization of the Jacobi function
of the second kind $Q_n^{(\alpha,\beta)}(z)$, $n\in\mathbb N_0$ (as given by \cite[(10.8.18)]{ErdelyiHTFII}), where the degree is no longer restricted to be an integer. 
In the following material we
derive properties for the Jacobi function of the second kind.
Below we give the four single Gauss hypergeometric function representations 
of the Jacobi function of the second kind.
\begin{thm}\label{thmQ}
Let $\gamma,\alpha,\beta,z\in\mathbb C$ such that 
$z\in\mathbb C\setminus[-1,1]$,
$\alpha+\gamma,\beta+\gamma\notin-\mathbb N$.
Then, the Jacobi function of the second kind
has the following Gauss hypergeometric representations
\begin{eqnarray}
&&\hspace{-1.4cm}Q_\gamma^{(\alpha,\beta)}(z) :=
\frac{2^{\alpha+\beta+\gamma}\Gamma(\alpha+\gamma+1)\Gamma(\beta+\gamma+1)}
{(z-1)^{\alpha+\gamma+1}(z+1)^\beta}
\Ohyp21{\gamma+1,\alpha+\gamma+1}{\alpha+\beta+2\gamma+2}
{\frac{2}{1-z}}
\label{dJsk1}\\[0.2cm]
&&\hspace{0.0cm}=
\frac{2^{\alpha+\beta+\gamma}\Gamma(\alpha+\gamma+1)\Gamma(\beta+\gamma+1)}
{(z-1)^{\alpha+\beta+\gamma+1}}
\Ohyp21{\beta+\gamma+1,\alpha+\beta+\gamma+1}{\alpha+\beta+2\gamma+2}
{\frac{2}{1-z}}\label{dJsk4}\\[0.2cm]
&&\hspace{0.0cm}=
\frac{2^{\alpha+\beta+\gamma}\Gamma(\alpha+\gamma+1)\Gamma(\beta+\gamma+1)}
{(z-1)^{\alpha}(z+1)^{\beta+\gamma+1}}
\Ohyp21{\gamma+1,\beta+\gamma+1}{\alpha+\beta+2\gamma+2}
{\frac{2}{1+z}}\label{dJsk3}\\[0.2cm]
&&\hspace{0.0cm}=
\frac{2^{\alpha+\beta+\gamma}\Gamma(\alpha+\gamma+1)\Gamma(\beta+\gamma+1)}
{(z+1)^{\alpha+\beta+\gamma+1}}
\Ohyp21{\alpha+\gamma+1,\alpha+\beta+\gamma+1}{\alpha+\beta+2\gamma+2}
{\frac{2}{1+z}}.
\label{dJsk2}
\end{eqnarray}
\end{thm}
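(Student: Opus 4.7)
The plan is to mirror the proof strategy of Theorem~\ref{bigPthm}: take the first identity \eqref{dJsk1} as the starting point (it is the natural analytic extension in $\gamma$ of the integer-parameter definition $Q_n^{(\alpha,\beta)}$ from \cite[(10.8.18)]{ErdelyiHTFII}, so we regard it as the definition of $Q_\gamma^{(\alpha,\beta)}$) and obtain \eqref{dJsk4}, \eqref{dJsk3}, \eqref{dJsk2} by applying the Euler and Pfaff linear transformations \eqref{Eulertran} and \eqref{Pfafftran} to the Gauss hypergeometric factor. The underlying $_2F_1$ in \eqref{dJsk1} has parameters $a=\gamma+1$, $b=\alpha+\gamma+1$, $c=\alpha+\beta+2\gamma+2$, and argument $\zeta:=2/(1-z)$, so the three linear transformations will reshuffle these parameters in a controlled way.

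First I would apply Euler's transformation \eqref{Eulertran}, which multiplies by $(1-\zeta)^{c-a-b}$ and sends $(a,b;c)\mapsto(c-a,c-b;c)$. Here $c-a-b=\beta$, and $1-\zeta=-(z+1)/(1-z)$, so the extra prefactor simplifies under our branch convention $(z^2-1)^\alpha=(z+1)^\alpha(z-1)^\alpha$ to convert $(z-1)^{-\alpha-\gamma-1}(z+1)^{-\beta}$ into $(z-1)^{-\alpha-\beta-\gamma-1}$; the new $_2F_1$ is $_2F_1(\beta+\gamma+1,\alpha+\beta+\gamma+1;\alpha+\beta+2\gamma+2;\zeta)$, which is exactly \eqref{dJsk4}. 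Next, applying the first form of Pfaff's transformation \eqref{Pfafftran} (acting on parameter $a$) introduces the factor $(1-\zeta)^{-a}=(1-\zeta)^{-\gamma-1}$ and changes the argument from $\zeta$ to $\zeta/(\zeta-1)=2/(z+1)$; the parameter pair becomes $(\gamma+1,\beta+\gamma+1)$, yielding \eqref{dJsk3} after consolidating the $(z\pm1)^{\bullet}$ factors. Finally, applying Pfaff in its second form (acting on parameter $b$) to \eqref{dJsk1} produces $(1-\zeta)^{-b}=(1-\zeta)^{-\alpha-\gamma-1}$ and the same argument transformation $\zeta\mapsto 2/(z+1)$, delivering $_2F_1(\alpha+\gamma+1,\alpha+\beta+\gamma+1;\alpha+\beta+2\gamma+2;2/(z+1))$ with the prefactor collapsing to $(z+1)^{-\alpha-\beta-\gamma-1}$, which is \eqref{dJsk2}.

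The main obstacle is purely bookkeeping: carefully tracking the $(z-1)^{\bullet}(z+1)^{\bullet}$ prefactors under our fixed branch convention when the Euler/Pfaff transformations produce factors like $(1-\zeta)^{c-a-b}$, and verifying that the domain restriction $z\in\mathbb C\setminus[-1,1]$ is consistent for each resulting representation (the arguments $2/(1\pm z)$ avoiding $[1,\infty)$, so that the $_2F_1$ series converges or is meant in its analytically continued sense). Once these elementary algebraic identifications are in place, the four representations follow uniformly, and the parameter restrictions $\alpha+\gamma,\beta+\gamma\notin-\mathbb N$ arise precisely from the appearance of the gamma factors $\Gamma(\alpha+\gamma+1)$ and $\Gamma(\beta+\gamma+1)$ in the common prefactor (the Olver normalization of the hypergeometric series itself is entire in its parameters, so no additional restrictions are needed from the $_2F_1$).
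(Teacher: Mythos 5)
Your proposal is correct and matches the paper's proof: the paper likewise takes \eqref{dJsk1} as the extension of Erd\'elyi's (10.8.18) with $n\mapsto\gamma$, $x\mapsto z$, and obtains \eqref{dJsk4}, \eqref{dJsk3}, \eqref{dJsk2} by applying Euler's and the two Pfaff transformations to the $_2F_1$ with argument $2/(1-z)$. Your parameter and prefactor bookkeeping (e.g.\ $c-a-b=\beta$, $\zeta/(\zeta-1)=2/(1+z)$) is exactly what the paper's one-line proof leaves implicit, and it checks out.
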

\begin{proof}
Start with \cite[(10.8.18)]{ErdelyiHTFII} and
let $n\mapsto\gamma\in\mathbb C$ and $x\mapsto z$.
Application of Pfaff's $(z\mapsto z/(z-1))$ and 
Euler's $(z\mapsto z)$ transformations
\cite[\href{http://dlmf.nist.gov/15.8.E1}{(15.8.1)}]{NIST:DLMF} provides the other 
three representations. This completes the proof.
\end{proof}

One has the following 
connection relation between Jacobi functions of the first kind and Jacobi functions of the second kind.

\begin{cor}
Let $\gamma,\alpha,\beta\in\CC$, $z\in\CC\setminus(-\infty,1]$, $\alpha+\gamma,\beta+\gamma\not\in-\N$, $\alpha+\beta+2\gamma\not\in\Z$. Then
\begin{eqnarray}
&&\hspace{-0.7cm}P_\gamma^{(\alpha,\beta)}(z)=
\frac{-2\sin(\pi(\beta+\gamma))}{\pi \sin(\pi(\alpha+\beta+2\gamma+1))}\nonumber\\
&&\hspace{1.3cm}\times\left(
\sin(\pi\gamma)
Q_\gamma^{(\alpha,\beta)}(z)
-\sin(\pi(\alpha+\gamma))\frac{\Gamma(\alpha+\gamma+1)\Gamma(\beta+\gamma+1)}{\Gamma(\gamma+1)\Gamma(\alpha+\beta+\gamma+1)}Q_{-\alpha-\beta-\gamma-1}^{(\alpha,\beta)}(z)\right).
\label{Durconrel}
\end{eqnarray}
\end{cor}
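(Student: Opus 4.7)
The plan is to derive \eqref{Durconrel} by applying a single linear transformation formula for the Gauss hypergeometric function to representation \eqref{Jac3} of $P_\gamma^{(\alpha,\beta)}$. The key observation is that $\frac{z-1}{z+1}=1-\frac{2}{z+1}$, so the ${}_2F_1$ appearing in \eqref{Jac3} has argument of the form $1-w$ with $w=\frac{2}{z+1}$, while the two Jacobi functions of the second kind $Q_\gamma^{(\alpha,\beta)}(z)$ and $Q_{-\alpha-\beta-\gamma-1}^{(\alpha,\beta)}(z)$ can, via \eqref{dJsk2}, both be expressed as ${}_2F_1$'s in the variable $w$. Thus the connection relation amounts to one instance of the standard ${}_2F_1(z)$--${}_2F_1(1-z)$ transformation.

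Concretely, I would start from \eqref{Jac3} and apply the connection formula \cite[\href{http://dlmf.nist.gov/15.10.E21}{(15.10.21)}]{NIST:DLMF}, namely
$$\hyp21{a,b}{c}{1-w}=\frac{\Gamma(c)\Gamma(c-a-b)}{\Gamma(c-a)\Gamma(c-b)}\hyp21{a,b}{a+b-c+1}{w}+\frac{\Gamma(c)\Gamma(a+b-c)}{\Gamma(a)\Gamma(b)}w^{c-a-b}\hyp21{c-a,c-b}{c-a-b+1}{w},$$
with $a=-\gamma$, $b=-\beta-\gamma$, $c=\alpha+1$. This produces two ${}_2F_1$'s of argument $\frac{2}{z+1}$ with parameter sets $\{-\gamma,-\beta-\gamma;-\alpha-\beta-2\gamma\}$ and $\{\alpha+\gamma+1,\alpha+\beta+\gamma+1;\alpha+\beta+2\gamma+2\}$, respectively. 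The second is precisely what occurs in \eqref{dJsk2} for $Q_\gamma^{(\alpha,\beta)}(z)$; the first is what occurs in \eqref{dJsk2} after substituting $\gamma\mapsto -\alpha-\beta-\gamma-1$ (under which $\alpha+\gamma+1\mapsto -\beta-\gamma$, $\alpha+\beta+\gamma+1\mapsto -\gamma$, $\alpha+\beta+2\gamma+2\mapsto -\alpha-\beta-2\gamma$), so it is proportional to $Q_{-\alpha-\beta-\gamma-1}^{(\alpha,\beta)}(z)$.

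The remaining work is to collapse the resulting gamma prefactors using Euler's reflection formula $\Gamma(x)\Gamma(1-x)=\pi/\sin(\pi x)$ three times, with $x=\alpha+\beta+2\gamma+1$, $x=-\gamma$, and $x=-\beta-\gamma$. Together with the identity $\sin(\pi(\alpha+\beta+2\gamma+1))=-\sin(\pi(\alpha+\beta+2\gamma))$ inherited from the reflection at $x=\alpha+\beta+2\gamma+1$, and cancellation of the accompanying factors of $(z+1)$ and powers of $2$ (each term reduces to a common factor of $2$), one recovers exactly the trigonometric coefficients in \eqref{Durconrel}.

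The main obstacle is purely bookkeeping: keeping careful track of the $\Gamma$-function cancellations, the signs produced by reflection, and verifying that all $(z+1)$-powers collapse. The hypothesis $\alpha+\gamma,\beta+\gamma\not\in-\N$ keeps both $Q$'s well-defined (by Theorem \ref{thmQ}), while $\alpha+\beta+2\gamma\not\in\Z$ ensures that neither the connection formula nor the denominator $\sin(\pi(\alpha+\beta+2\gamma+1))$ in \eqref{Durconrel} degenerates, and that the two $Q$'s are linearly independent solutions of the Jacobi equation \eqref{Jacde} — consistent with the observation that \eqref{Durconrel} expresses the first-kind solution as a linear combination of two generically independent second-kind solutions.
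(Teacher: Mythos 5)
Your proposal is correct and is essentially the paper's own proof: the paper starts from \eqref{Jac1} and applies the two-term $z\mapsto z^{-1}$ transformation \cite[(15.8.2)]{NIST:DLMF}, then compares twice with Theorem \ref{thmQ}, while you start from the Pfaff-equivalent representation \eqref{Jac3} and use the $z\mapsto 1-z$ connection formula, comparing with \eqref{dJsk2}; the two computations coincide up to a change of variable. All of your identifications (the parameter sets, the substitution $\gamma\mapsto-\alpha-\beta-\gamma-1$, and the reflection-formula cleanup, which in fact also requires one reflection at $\alpha+\gamma$ to produce the $\sin(\pi(\alpha+\gamma))\Gamma(\alpha+\gamma+1)$ factor) check out and reproduce \eqref{Durconrel} exactly.
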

\begin{proof}
This can be derived by starting with \eqref{Jac1}, applying the linear transformation \cite[\href{http://dlmf.nist.gov/15.8.E2}{(15.8.2)}] {NIST:DLMF} $z\mapsto z^{-1}$ and then comparing twice with Theorem \ref{thmQ}.
\end{proof}

\begin{rem}
Using \eqref{Durconrel} one can see that
for $\gamma=n\in\N_0$, that $Q_{-\alpha-\beta-\gamma-1}^{(\alpha,\beta)}(z)$ is a Jacobi polynomial, namely
\begin{eqnarray}
&&\hspace{-3.2cm}Q_{-\alpha-\beta-1-n}^{(\alpha,\beta)}(z)=
\frac{\Gamma(-\alpha)\Gamma(-\beta)}{2\Gamma(-\alpha-\beta)}
\frac{n!(\alpha+\beta+1)_n}{(\alpha+1)_n(\beta+1)_n}
P_n^{(\alpha,\beta)}(z)\nonumber\\
&&\hspace{-0.6cm}
=-\frac{\pi}{2}
\frac{\sin(\pi(\alpha+\beta))}{\sin(\pi\alpha)\sin(\pi\beta)}
\frac{n!\,\Gamma(\alpha+\beta+1+n)}{\Gamma(\alpha+1+n)\Gamma(\beta+1+n)}
P_n^{(\alpha,\beta)}(z).
\label{QtoP}
\end{eqnarray}
\end{rem}

\begin{rem}
From Theorem 
\ref{thmQ} one can derive the following special values for $Q_{-1}^{(\alpha,\beta)}(z)$and $Q_{0}^{(\alpha,\beta)}(z)$, namely
\begin{eqnarray}
\label{JF2Kmone}
&&\hspace{-3.50cm}Q_{-1}^{(\alpha,\beta)}(z) =
\frac{2^{\alpha+\beta-1}\Gamma(\alpha)\Gamma(\beta)}
{\Gamma(\alpha+\beta)(z-1)^{\alpha}(z+1)^\beta},\\
&&\hspace{-3.50cm}Q_0^{(\alpha,\beta)}(z) =
\frac{2^{\alpha+\beta}\Gamma(\alpha+1)\Gamma(\beta+1)}
{(z+1)^{\alpha+\beta+1}}
\Ohyp21{\alpha+1,\alpha+\beta+1}{\alpha+\beta+2}
{\frac{2}{1+z}}.
\end{eqnarray}
Using the three-term recurrence relation \eqref{ttrl} one can derive values of the Jacobi function of the second kind at all negative integer values. For instance, one can derive
\begin{eqnarray}
\label{JF2Kmtwo}
&&\hspace{-3.50cm}Q_{-2}^{(\alpha,\beta)}(z) =
\frac{2^{\alpha+\beta-2}\Gamma(\alpha-1)\Gamma(\beta-1)}
{\Gamma(\alpha+\beta-1)(z-1)^{\alpha}(z+1)^\beta}(\alpha-\beta+(\alpha+\beta-2)z),
\end{eqnarray}
and also expressions for Jacobi functions of the second kind with further negative integer values of $\gamma$.
\end{rem}

If one examines the Gauss hypergeometric representations presented in Theorem \ref{thmQ} one can see that they are not defined for certain values of $\gamma$, $\alpha$, $\beta$ since we must avoid $\alpha+\gamma$ and $\beta+\gamma$ being a negative integer. In fact, these singularities are removable and one is able to compute the values of these Jacobi functions.  One can evaluate the Jacobi function of the second kind when the parameters $\alpha$, $\beta$,  and degree $\gamma$ is a non-negative integer in the following result{,} which was inspired by the work in \cite{WimpMcCabeConnor97}.

\begin{thm}
\label{Qnotcutsumthm}
Let $n,a,b\in\N_0$, $z\in\CC\setminus[-1,1]$. Then
\begin{eqnarray}
&&\hspace{-1cm} Q_n^{(a,b)}(z)=\frac{(-1)^{a+n}}{2^{n+1}}
\sum_{\substack{k=0\\k\ne n}}^{a+b+2n}\frac{(-2)^k}{(n-k)}
\left((z+1)^{n-k}-(z-1)^{n-k}\right)
P_k^{(a+n-k,b+n-k)}(z)\nonumber\\
&&\hspace{6cm}+\frac{(-1)^a}{2}\log\left(\frac{z+1}{z-1}\right)P_n^{(a,b)}(z).
\end{eqnarray}
\end{thm}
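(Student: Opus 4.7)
The plan is to derive the formula starting from a Rodrigues–type integral representation of $Q_n^{(a,b)}(z)$, Taylor-expanding the polynomial part around $t=z$, and then evaluating the resulting elementary inner integrals.

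First, I would convert the hypergeometric representation \eqref{dJsk1} into the integral form
\[
Q_n^{(a,b)}(z)=\frac{1}{2^{n+1}(z-1)^a(z+1)^b}\int_{-1}^{1}\frac{(1-t)^{a+n}(1+t)^{b+n}}{(z-t)^{n+1}}\,dt,
\]
valid for $a,b,n\in\N_0$ and $z\in\CC\setminus[-1,1]$. This follows by applying Euler's integral representation of ${}_2F_1$ to the representation \eqref{dJsk1} and making the substitution $s=(1-t)/2$, so that the convergence conditions are automatic for non-negative integer parameters.

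Next, I would Taylor-expand the numerator $(1-t)^{a+n}(1+t)^{b+n}$, a polynomial of degree $a+b+2n$ in $t$, around the point $t=z$. The key identity is that Rodrigues' formula for the Jacobi polynomial $P_k^{(a+n-k,b+n-k)}$ gives
\[
\frac{d^k}{dt^k}\bigl[(1-t)^{a+n}(1+t)^{b+n}\bigr]
=(-2)^k k!\,(1-t)^{a+n-k}(1+t)^{b+n-k}P_k^{(a+n-k,b+n-k)}(t),
\]
so the Taylor expansion reads
\[
(1-t)^{a+n}(1+t)^{b+n}=\sum_{k=0}^{a+b+2n}(-2)^k(t-z)^k(1-z)^{a+n-k}(1+z)^{b+n-k}P_k^{(a+n-k,b+n-k)}(z).
\]
Substituting into the integral, interchanging summation and integration, and using $(t-z)^k=(-1)^k(z-t)^k$ reduces the problem to evaluating the elementary integrals $\int_{-1}^{1}(z-t)^{k-n-1}\,dt$. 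These give $\log\frac{z+1}{z-1}$ when $k=n$, and $\bigl[(z+1)^{k-n}-(z-1)^{k-n}\bigr]/(k-n)$ otherwise. The $k=n$ term produces the claimed logarithmic contribution after the prefactor $(1-z)^a(1+z)^b/[(z-1)^a(z+1)^b]=(-1)^a$ is absorbed.

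For each $k\neq n$ term, I would simplify the product of powers by writing $(1-z)^{a+n-k}=(-1)^{a+n-k}(z-1)^{a+n-k}$, using
\[
(z-1)^{n-k}(z+1)^{n-k}\bigl[(z+1)^{k-n}-(z-1)^{k-n}\bigr]=(z-1)^{n-k}-(z+1)^{n-k},
\]
dividing by $(z-1)^a(z+1)^b$ in the prefactor, and tracking the overall sign via $(-1)^{a+n-k}=(-1)^{a+n}(-1)^k$. The combined factor then becomes
\[
\frac{(-1)^{a+n}(-2)^k}{2^{n+1}(n-k)}\bigl[(z+1)^{n-k}-(z-1)^{n-k}\bigr],
\]
matching the target formula exactly. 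The main obstacle I anticipate is precisely this sign and power-of-two bookkeeping in the final simplification; a secondary technical point is to justify the Rodrigues integral representation cleanly for all integer $a,b,n$ without convergence issues, which requires the non-negativity of $a$ and $b$ to keep the integrand integrable at $t=\pm 1$.
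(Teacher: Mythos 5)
Your proposal is correct and follows essentially the same route as the paper's proof: the Szeg\H{o}-type integral representation \eqref{intrepQ}, the Rodrigues-type Taylor expansion of $(1-t)^{a+n}(1+t)^{b+n}$ about $t=z$ in terms of $P_k^{(a+n-k,b+n-k)}(z)$, and the elementary integrals $\int_{-1}^1(z-t)^{k-n-1}\,\dd t$ with the logarithmic case $k=n$. The only difference is cosmetic: you derive the integral representation from \eqref{dJsk1} via Euler's integral, whereas the paper simply cites it from Szeg\H{o}, and your sign and power-of-two bookkeeping does reproduce the stated formula.
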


\begin{proof}
Start with the integral
representation for the Jacobi function of the second kind
\cite[(4.61.1)]{Szego}
\begin{equation}
\hspace{0.9cm}Q_\gamma^{(\alpha,\beta)}(z)=\frac{1}{2^{\gamma+1}(z-1)^\alpha(z+1)^\beta}\int_{-1}^{1}\frac{(1-t)^{\alpha+\gamma}(1+t)^{\beta+\gamma}}{(z-t)^{\gamma+1}}{\mathrm d}t,
\label{intrepQ}
\end{equation}
provided $\Re(\alpha+\gamma)$, $\Re(\beta+\gamma)>-1$ \cite[(2.5)]{WimpMcCabeConnor97}
and 
identify $(\gamma,\alpha,\beta)=(n,a,b)\in\N_0^3$.
Then consider 
\begin{eqnarray}
&&\hspace{-3.4cm}\mu_{n,k}^{(a,b)}(z):=
\frac{\dd^k}{\dd z^k}(1-z)^{n+a}(1+z)^{n+b}\nonumber\\[0.1cm]
&&\hspace{-1.8cm}=(-1)^k2^kk!(1-z)^{a+n-k}(1+z)^{b+n-k}P_k^{(a+n-k,b+n-k)}(z),\nonumber
\end{eqnarray}
where we have used the Rodrigues-type formula for Jacobi polynomials
\cite[\href{http://dlmf.nist.gov/18.5.T1}{Table 18.5.1}]{NIST:DLMF}.
It is easy to show that
\begin{equation}
\hspace{1cm}(1-t)^{n+a}(1+t)^{n+b}=\sum_{k=0}^{2n+a+b}
\mu_{n,k}^{(a,b)}(z)\frac{(t-z)^k}{k!},
\label{sumit}
\end{equation}
and the right-hand side is valid for all $z\in\C$.
Now start with \eqref{intrepQ}  and insert  \eqref{sumit} into the integrand and perform the integration over $t\in(-1,1)$ using
\[
\hspace{1cm}\int_{-1}^1 (z-t)^{k-n-1}\,\dd t
=
\left\{ \begin{array}{ll}
\displaystyle \frac{(z+1)^{k-n}-(z-1)^{k-n}}{k-n} & \qquad\mathrm{if}\ k\ne n,\\[10pt]
\displaystyle \log\left(\frac{z+1}{z-1}\right) & \qquad\mathrm{if}\ k=n,
\end{array} \right.
\]
which completes the proof.
\end{proof}

By using \eqref{dJsk3} we find that
if $|z|\sim 1+\epsilon$ then as $\epsilon\to0^{+}$ one has the following behavior of the Jacobi function of the second kind near the singularity at $z=1$, namely
\begin{equation}
\hspace{1cm}Q_\gamma^{(\alpha,\beta)}(1+\epsilon)\sim\frac{2^{\alpha-1}\Gamma(\alpha)\Gamma(\beta+\gamma+1)}{\Gamma(\alpha+\beta+\gamma+1)\epsilon^\alpha},
\label{Qnear1}
\end{equation}
where $\Re\alpha>0$, $\beta+\gamma\not\in-\N$. 
By using \eqref{dJsk3} we
see that as $|z|\to\infty$ one has 
\begin{equation}
\label{Qinfty}
\hspace{1cm}Q_\gamma^{(\alpha,\beta)}(z)\sim\frac{2^{\alpha+\beta+\gamma}\Gamma(\alpha+\gamma+1)\Gamma(\beta+\gamma+1)}{\Gamma(\alpha+\beta+2\gamma+2)z^{\alpha+\beta+\gamma+1}},
\end{equation}
where $\alpha+\gamma+1,\beta+\gamma\not\in-\N$.

\subsubsection{Jacobi functions of the first and second kind on-the-cut}
\label{Jac2cut}
\medskip
We now refer to the real segment $(-1,1)$ as the cut and the Jacobi functions 
of the first and second kind on-the-cut as ${\sf P}_\gamma^{(\alpha,\beta)},
{\sf Q}_\gamma^{(\alpha,\beta)}$.
The natural definitions of these Jacobi functions 
are due to Durand and can be found in 
\cite[(2.3), (2.4)]{Durand78} (see also 
\cite{Askeyetal86}). These are given as follows:
\begin{eqnarray}
\label{JacPcutdef}
&&\hspace{-2.1cm}{\sf P}_\gamma^{(\alpha,\beta)}(x)
:=\frac{i}{\pi}\left(\expe^{i\pi\alpha}
Q_\gamma^{(\alpha,\beta)}(x+i0)
-\expe^{-i\pi\alpha}Q_\gamma^{(\alpha,\beta)}
(x-i0)\right)
=P_\gamma^{(\alpha,\beta)}(x\pm i0),
\label{Pcutdef}\\
&&\hspace{-2.1cm} {\sf Q}_\gamma^{(\alpha,\beta)}(x)
:=\frac12\left(\expe^{i\pi\alpha}
Q_\gamma^{(\alpha,\beta)}(x+i0)
+\expe^{-i\pi\alpha}Q_\gamma^{(\alpha,\beta)}(x-i0)
\right).
\label{Qcutdef}
\end{eqnarray}
Note that the Jacobi function of the first kind 
on-the-cut \eqref{Pcutdef} is simply an analytic 
continuation of the Jacobi function of the first 
kind (see Theorem \ref{Firstthm}) since 
the complex-valued function is continuous across 
the real interval $(-1,1]$. On the other hand, 
the Jacobi function of the second kind is not an 
analytic continuation of the Jacobi function of 
the second kind
(see Theorem \ref{thmQ}). This is because 
$Q_\gamma^{(\alpha,\beta)}$ is not continuous 
across the real interval $(-1,1)$.
Hence, an `average' \eqref{Qcutdef} must be taken of the 
function values with infinitesimal positive and negative 
arguments in order to define it. Originally,
in Szeg\H{o}'s book \cite[\S4.62.9]{Szego} (see also 
\cite[(10.8.22)]{ErdelyiHTFII}) a definition for 
the Jacobi function of the second kind on-the-cut was 
given by ${\sf Q}_\gamma^{(\alpha,\beta)}(x):=\frac12
\left( Q_\gamma^{(\alpha,\beta)}(x+i0)
+Q_\gamma^{(\alpha,\beta)}(x-i0)\right)$, but as is pointed 
out by Durand \cite{Durand78}, Szeg\H{o}'s definition 
destroys the analogy between 
${\sf P}_\gamma^{(\alpha,\beta)}(\cos\theta)$, 
${\sf Q}_\gamma^{(\alpha,\beta)}(\cos\theta)$ and the 
trigonometric functions. Hence with the updated Durand 
definitions for the Jacobi functions of the first and 
second kind on-the-cut \eqref{Pcutdef}, \eqref{Qcutdef}, 
one has the following asymptotics as $n\to\infty$, {namely \cite[p.~77]{Durand78}}
\[
\hspace{0.8cm}Q_n^{(\alpha,\beta)}(\cos\theta
\pm i0)\sim\frac12\left(\frac{\pi}{n}\right)^\frac12
\left(\sin(\tfrac12\theta)\right)^{-\alpha-\frac12}
\left(\cos(\tfrac12\theta)\right)^{-\beta-\frac12}
\expe^{\mp iN\theta\mp i\frac{\pi}{2}(\alpha+\frac12)},
\]
where $N:=n+\frac12\alpha+\frac12\beta+\frac12$.

There are many double hypergeometric
representations of the Jacobi function
of the second kind on-the-cut 
${\sf Q}_\gamma^{(\alpha,\beta)}:\mathbb C\setminus((-\infty,1]
\cup[1,\infty))\to\mathbb C$. 
These hypergeometric representations follow 
by applying the definition \eqref{Qcutdef} to 
Theorem \ref{thmQ} which provides the Gauss hypergeometric 
representations for the Jacobi function of the second kind. 
The application of \eqref{Qcutdef} takes the argument of the
Gauss hypergeometric functions just above and below the ray 
$(1,\infty)$
in which it is known that the Gauss hypergeometric 
function is discontinuous.
The values of the Gauss hypergeometric function $z$ 
above and below this ray may then be transformed 
into a region where the Gauss hypergeometric function 
is continuous in a complex neighborhood of the argument 
of the Gauss hypergeometric function by utilizing the 
transformations which one can find in 
\cite[Appendix B]{Cohletal2021}.
These transformations, which map from Gauss hypergeometric 
functions with argument $x\pm i0$ to sums of Gauss 
hypergeometric functions with arguments given by 
$1/x$, $1-x$, $1-x^{-1}$ and $(1-x)^{-1}$. 
Eight Gauss hypergeometric function representations of 
the Jacobi function of the second kind on-the-cut can 
be obtained by starting with \eqref{dJsk1}-\eqref{dJsk2}, 
applying the transformation \cite[Theorem B.1]{Cohletal2021} 
$z\mapsto z^{-1}$ and by either utilizing 
the Euler \eqref{Eulertran} or Pfaff \eqref{Pfafftran} 
transformations as needed. 
There are certainly more Gauss hypergeometric 
representations that can be obtained for the Jacobi 
function of the second kind on-the-cut by applying 
\cite[Theorems B.2--B.4]{Cohletal2021}, but the derivation 
of these representations must be left to a later publication. 
{We} will give two of these here for
$\gamma,\alpha,\beta\in\mathbb C$ such that 
$\alpha, \beta\not\in\mathbb Z$, $\alpha+\gamma,\beta
+\gamma\not\in-\mathbb N$, namely
\begin{eqnarray}
&&\hspace{-0.4cm}
{\sf Q}_\gamma^{(\alpha,\beta)}(x)=
\frac{\pi}{2\sin(\pi\alpha)}
\Biggl(-
\cos(\pi\alpha)\frac{\Gamma(\alpha+\gamma+1)}
{\Gamma(\gamma+1)}
\Ohyp21{-\gamma,\alpha+\beta+\gamma+1}{1+\alpha}
{\frac{1\!-\!x}{2}}\nonumber\\[-0.0cm]
&&\hspace{3.7cm}
+\frac{\Gamma(\beta+\gamma+1)}
{\Gamma(\alpha+\beta+\gamma+1)}
\left(\frac{2}{1\!-\!x}\right)^\alpha
\left(\frac{2}{1\!+\!x}\right)^\beta
\Ohyp21{-\alpha-\beta-\gamma,\gamma+1}
{1-\alpha}{\frac{1\!-\!x}{2}}
\Biggr)
\label{Qcut1}\\
&&\hspace{1.25cm}=
\frac{\pi}{2^{\gamma+1}\sin(\pi\alpha)}
\biggl(-\cos(\pi\alpha)\frac{\Gamma(\alpha+\gamma+1)}{\Gamma(\gamma+1)}
(1+x)^\gamma
\Ohyp21{-\gamma,-\beta-\gamma}{1+\alpha}{\frac{x-1}{x+1}}\nonumber\\
&&\hspace{3.7cm}+\frac{\Gamma(\beta+\gamma+1)}{\Gamma(\alpha+\beta+\gamma+1)}
\frac{(1\!+\!x)^{\alpha+\gamma}}{(1\!-\!x)^\alpha}
\Ohyp21{-\alpha-\beta-\gamma,-\alpha-\gamma}{1-\alpha}{\frac{x-1}{x+1}}\biggr).
\end{eqnarray}

Just as we were able to compute the values of the Jacobi function of the second kind with non-negative integer parameters and degree, {the same evaluation can be accomplished} for the Jacobi function of the second kind on-the-cut which we present now.

\begin{thm}
\label{Qcutsub}
Let $n,a,b\in\N_0$, $x\in(-1,1)$. Then
\begin{eqnarray}
&&\hspace{-0.7cm}{\sf Q}_n^{(a,b)}(x)\!=\!\frac{(-1)^n}{2^{n+1}}\sum_{\substack{k=0\\k\ne n}}^{a+b+2n}\frac{(-2)^k}{(n-k)}\left((1\!+\!x)^{n-k}-(x\!-\!1)^{n-k}\right)P_k^{(a+n-k,b+n-k)}(x)
+\frac12\log\left(\frac{1\!+\!x}{1\!-\!x}\right)P_n^{(a,b)}(x).
\end{eqnarray}
\end{thm}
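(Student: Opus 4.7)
The plan is to apply the Durand definition \eqref{Qcutdef} of the on-the-cut Jacobi function of the second kind, specialized to integer $\alpha=a\in\No$ so that $\expe^{\pm i\pi a}=(-1)^a$, and then substitute the already-established formula for $Q_n^{(a,b)}(z)$ from Theorem \ref{Qnotcutsumthm} evaluated at $z=x+i0$ and $z=x-i0$. Explicitly, \eqref{Qcutdef} reduces to
\[
{\sf Q}_n^{(a,b)}(x)=\frac{(-1)^a}{2}\Bigl(Q_n^{(a,b)}(x+i0)+Q_n^{(a,b)}(x-i0)\Bigr),
\]
so the task is to compute each boundary value and average.

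First I would split the right-hand side of Theorem \ref{Qnotcutsumthm} into its \emph{polynomial} part (the finite sum over $k\neq n$) and its \emph{log} part. Each Jacobi polynomial factor $P_k^{(a+n-k,b+n-k)}(z)$ is a polynomial in $z$, and each factor $(z\pm 1)^{n-k}$ has integer exponent (possibly negative), so the polynomial part is at worst a rational function of $z$ whose only singularities are at $z=\mp 1$; since neither point lies in the cut $(-1,1)$, this part is single-valued on a complex neighborhood of the cut and takes identical boundary values from above and below, producing exactly the sum in the theorem with $z$ replaced by $x$.

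The nontrivial step is tracking the branch of $\log\bigl(\tfrac{z+1}{z-1}\bigr)$. For $x\in(-1,1)$ and small $\epsilon>0$, a direct computation of $\tfrac{(x+1)+i\epsilon}{(x-1)+i\epsilon}$ shows that its real part tends to $\tfrac{x+1}{x-1}<0$ while its imaginary part $\tfrac{-2\epsilon}{(x-1)^2+\epsilon^2}$ is strictly negative, so $\tfrac{z+1}{z-1}$ approaches the negative real axis from below as $z\to x+i0$. Under the principal branch this gives $\log\bigl(\tfrac{z+1}{z-1}\bigr)\to \log\bigl(\tfrac{1+x}{1-x}\bigr)-i\pi$, and by complex conjugation the $z=x-i0$ limit is $\log\bigl(\tfrac{1+x}{1-x}\bigr)+i\pi$. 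The $\pm i\pi$ imaginary parts cancel when the two boundary values are summed, leaving $2\log\bigl(\tfrac{1+x}{1-x}\bigr)$.

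Finally I would collect the pieces. Averaging doubles the polynomial part; combined with the factor $\tfrac{(-1)^a}{2}$ from \eqref{Qcutdef} and the coefficient $\tfrac{(-1)^{a+n}}{2^{n+1}}$ inside Theorem \ref{Qnotcutsumthm}, the overall coefficient is $\tfrac{(-1)^n}{2^{n+1}}$, exactly as in the claim. The log contribution becomes $\tfrac{(-1)^a}{2}\cdot(-1)^a\log\bigl(\tfrac{1+x}{1-x}\bigr)P_n^{(a,b)}(x)=\tfrac{1}{2}\log\bigl(\tfrac{1+x}{1-x}\bigr)P_n^{(a,b)}(x)$, matching the stated form. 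The only genuine obstacle is the branch analysis of the logarithm, which is dispatched by the elementary computation above; everything else reduces to bookkeeping of signs and powers of $(-1)^a$.
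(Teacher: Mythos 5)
Your proposal is correct and follows essentially the same route as the paper's own (very terse) proof: apply the Durand definition \eqref{Qcutdef} with $\alpha=a\in\N_0$ to the evaluation of $Q_n^{(a,b)}$ in Theorem \ref{Qnotcutsumthm}. The only difference is that you spell out the details the paper leaves implicit, namely the single-valuedness of the rational part across $(-1,1)$ and the $\mp i\pi$ branch contributions of $\log\bigl(\frac{z+1}{z-1}\bigr)$ cancelling in the average, and these details are handled correctly.
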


\begin{proof}
Start with Theorem \ref{Qnotcutsumthm} and use the definition \eqref{Qcutdef}
which completes the proof.
\end{proof}

\noindent 
Note that by setting $a=b$ in the above result we can 
obtain an interesting finite sum expression for the Ferrers functions of the second kind with non-negative integer degree  
and order given as a sum over ultraspherical polynomials.

\begin{cor}
Let $n,a\in\N_0$, $x\in(-1,1)$. Then
\begin{eqnarray}
&&\hspace{-0.5cm}{\sf Q}_n^{a}(x)=\frac{(-1)^a(1-x^2)^{\frac12a}}{2\sqrt{\pi}}
\biggl((-1)^{n+a}2^n(n+a)!\nonumber\\
&&\hspace{1cm}\times\sum_{\substack{k=0\\[0.10cm]k\ne n-a}}^{2n}\frac{(-1)^k\Gamma(n-k+\frac12)}{2^k(2n-k)!(n-a-k)}\left((1+x)^{n-a-k}-(x-1)^{n-a-k}\right)C_k^{n-k+\frac12}(x)\nonumber\\
&&\hspace{6cm}+2^a\Gamma(a+\tfrac12)\log\left(\frac{1+x}{1-x}\right)C_{n-a}^{a+\frac12}(x)\biggr).
\end{eqnarray}
\end{cor}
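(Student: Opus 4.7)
The strategy is to specialize Theorem \ref{Qcutsub} to the symmetric Jacobi parameter case $b=a$ (so that the Jacobi polynomials on the right collapse to ultraspherical polynomials) and then translate the resulting identity from the Jacobi-on-the-cut setting to the Ferrers setting via a Gegenbauer intermediate. The key bridging identity, valid for $n,a\in\N_0$ with $n\ge a$, that I need to establish is
\[
{\sf Q}_n^{a}(x)=\frac{(-1)^a\,\Gamma(n+a+1)\,(1-x^2)^{\frac12 a}}{2^a\,n!}\,{\sf Q}_{n-a}^{(a,a)}(x),
\]
which I will refer to as $(\star)$.

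To derive $(\star)$, I would first observe that the second hypergeometric representation of $Q_\lambda^{(\mu-\frac12,\mu-\frac12)}(z)$ in \eqref{dJsk1} and the second representation of $D_\lambda^\mu(z)$ in \eqref{GegDhyper} share the identical $\Ohyp21$ factor and the identical singular prefactors $(z-1)^{-\lambda-\mu-\frac12}(z+1)^{\frac12-\mu}$, so their ratio is an explicit closed-form constant depending only on $\lambda$ and $\mu$. Applying the Durand averages \eqref{Qcutdef} and \eqref{GegDcutdef} to both sides then produces the on-the-cut companion
\[
{\sf Q}_\lambda^{(\mu-\frac12,\mu-\frac12)}(x)=\frac{2^{2\mu-2}\sqrt{\pi}\,\Gamma(\mu)\,\Gamma(\lambda+\mu+\tfrac12)}{\Gamma(\lambda+2\mu)}\,{\sf D}_\lambda^\mu(x).
\]
Combining this with \eqref{relDQcut} to eliminate ${\sf D}_\lambda^\mu$ in favor of the Ferrers function ${\sf Q}_{\lambda+\mu-\frac12}^{\frac12-\mu}$, then setting $\mu=a+\tfrac12$, and finally invoking the classical integer-order Ferrers identity ${\sf Q}_\nu^{-a}(x)=(-1)^a\Gamma(\nu-a+1)\,{\sf Q}_\nu^a(x)/\Gamma(\nu+a+1)$ (for $a\in\N_0$, obtained by taking the limit $\mu\to a$ in \eqref{QcuthyprepA}) together with the relabelling $\lambda\mapsto n-a$ yields $(\star)$.

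With $(\star)$ in hand, the second step is to apply Theorem \ref{Qcutsub} with the substitution $(n,a,b)\mapsto(n-a,a,a)$: the upper sum limit $a+b+2(n-a)$ collapses to $2n$, the excluded index shifts to $k=n-a$, the exponents become $(1+x)^{n-a-k}-(x-1)^{n-a-k}$, the Jacobi polynomials in the sum become $P_k^{(n-k,n-k)}(x)$, and the log term carries $P_{n-a}^{(a,a)}(x)$. The third step is to rewrite each of these symmetric-parameter Jacobi polynomials as a Gegenbauer polynomial via \eqref{GegJac}, giving
\[
P_k^{(n-k,n-k)}(x)=\frac{n!\,(2n-2k)!}{(n-k)!\,(2n-k)!}\,C_k^{n-k+\frac12}(x),\qquad
P_{n-a}^{(a,a)}(x)=\frac{n!\,(2a)!}{a!\,(n+a)!}\,C_{n-a}^{a+\frac12}(x).
\]
Substituting these into Theorem \ref{Qcutsub}, multiplying through by the prefactor from $(\star)$, and simplifying with the Legendre duplication formula $\Gamma(2z)=2^{2z-1}\Gamma(z)\Gamma(z+\tfrac12)/\sqrt{\pi}$ (which turns $(2n-2k)!/(n-k)!$ into $2^{2(n-k)}\Gamma(n-k+\tfrac12)/\sqrt\pi$ and $(2a)!/a!$ into $2^{2a}\Gamma(a+\tfrac12)/\sqrt\pi$) reproduces exactly the coefficients in the stated formula.

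The main obstacle is the first step: passing from Jacobi-on-the-cut to Ferrers threads together three separate scalings (the $Q\leftrightarrow D$ hypergeometric comparison, the ${\sf D}\leftrightarrow{\sf Q}_{\rm Ferrers}$ identity \eqref{relDQcut}, and the negative-order Ferrers identity), each carrying half-integer gammas, powers of $2$, and Durand phase factors $\expe^{\pm i\pi\mu}$ that must collapse to the clean prefactor $(-1)^a\,2^{-a}\Gamma(n+a+1)/n!$ appearing in $(\star)$; once this bookkeeping is settled the remainder of the proof is routine algebra driven by the duplication formula.
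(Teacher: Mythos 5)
Your proposal is correct and follows essentially the same route as the paper: specialize Theorem \ref{Qcutsub} to $(n-a,a,a)$, bridge the symmetric Jacobi function of the second kind on-the-cut to the Ferrers function, rewrite the symmetric Jacobi polynomials as ultraspherical polynomials, and clean up with the duplication formula. The only difference is that the paper obtains your identity $(\star)$ by simply citing its relation \eqref{Qcutsym} (together with \eqref{relJacGeg} and the implicit integer-order flip ${\sf Q}_n^{-a}=(-1)^a\frac{(n-a)!}{(n+a)!}{\sf Q}_n^{a}$), whereas you rederive that same bridge from \eqref{dJsk1}, \eqref{GegDhyper}, the Durand cut definitions and \eqref{relDQcut} — more work, but equivalent, and your bookkeeping (including the explicit $n\ge a$ caveat) checks out against the stated coefficients.
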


\begin{proof}
Start with \eqref{Qcutsub} and 
set $a=b$. Then utilizing
\eqref{Qcutsym} below with
\eqref{relJacGeg} completes the proof.
\end{proof}

By using \eqref{Qcut1} we
see that as $x=1-\epsilon$ one has 
as $\epsilon\to 0^{+}$,
\begin{equation}
\label{Qcutnear1}
\hspace{1cm}{\sf Q}_\gamma^{(\alpha,\beta)}(1-\epsilon)\sim\frac{2^{\alpha-1}\Gamma(\alpha)\Gamma(\beta+\gamma+1)}{\Gamma(\alpha+\beta+\gamma+1)\epsilon^\alpha},
\end{equation}
where $\beta+\gamma+1\not\in-\N_0$ and
$\Re\alpha>0$.

\subsection{Specializations to Gegenbauer, associated Legendre and Ferrers functions}
Here we discuss some limiting cases where the Jacobi functions 
reduce to more elementary functions such as Gegenbauer, 
associated Legendre, and Ferrers functions.

These identities involve 
symmetric and antisymmetric Jacobi functions of
the first kind.
The relation between the symmetric Jacobi function
of the first kind and the Gegenbauer function
of the first kind for $z\in\mathbb C\setminus(-\infty,-1]$
is given by
\begin{equation}
\label{relJacGeg}
\hspace{1.0cm}P_\gamma^{(\alpha,\alpha)}(z)=
\frac{\Gamma(2\alpha+1)\Gamma(\alpha+\gamma+1)}
{\Gamma(\alpha+1)\Gamma(2\alpha+\gamma+1)}
C_\gamma^{\alpha+\frac12}(z).
\end{equation}
This follows by starting with \eqref{Jac1} and then 
comparing it to the Gauss hypergeometric representation 
of the Gegenbauer function of the first kind on the 
right-hand side using {\eqref{Gegenbauerfuncdef}}.

\begin{rem}
The relation between the symmetric Jacobi function 
of the first kind and the Ferrers function of the 
first kind is
\begin{equation}
\label{relJacFer}
\hspace{1.0cm}P_\gamma^{(\alpha,\alpha)}(z)=
\frac{2^\alpha\Gamma(\alpha+\gamma+1)}{\Gamma(\gamma+1)
(1-x^2)^{\frac12\alpha}}{\sf P}_{\alpha+\gamma}^{-\alpha}(x),
\end{equation}
where $x\in\mathbb C\setminus((-\infty,-1]\cup[1,\infty))$ and the relation between 
the symmetric Jacobi function
of the first kind 
and the associated Legendre function 
of the first kind is
\begin{equation}
\label{relJacAssP}
\hspace{1.0cm}P_\gamma^{(\alpha,\alpha)}(z)=
\frac{2^\alpha\Gamma(\alpha+\gamma+1)}{\Gamma(\gamma+1)(z^2-1)^{\frac12\alpha}} 
P_{\alpha+\gamma}^{-\alpha}(z)
\end{equation}
where $z\in\mathbb C\setminus(-\infty,1]$.
These are easily obtained
through
\cite[\href{http://dlmf.nist.gov/18.7.E2}{(18.7.2)}]{NIST:DLMF}
and
\cite[\href{http://dlmf.nist.gov/14.3.E21}{(14.3.21)}, \href{http://dlmf.nist.gov/14.3.E22}{(14.3.22)}]{NIST:DLMF}.
\end{rem}

\begin{rem}The relation between the antisymmetric Jacobi function of the first kind {on-the-cut} and
the Ferrers function of the first kind and the Gegenbauer function {of the first kind on-the-cut} is
\begin{eqnarray}
&&\hspace{-5.7cm}{\sf P}_\gamma^{(\alpha,-\alpha)}(x)
=\frac{\Gamma(\alpha+\gamma+1)}{\Gamma(\gamma+1)}
\left(\frac{1+x}{1-x}\right)^{\frac12\alpha}
{\sf P}_\gamma^{-\alpha}(x)\nonumber\\
&&\hspace{-3.8cm}=
\frac{\Gamma(2\alpha+1)\Gamma(\gamma-\alpha+1)}
{2^\alpha\Gamma(\gamma+1)\Gamma(\alpha+1)}
(1+x)^{\alpha}
{\sf C}_{\gamma-\alpha}^{\alpha+\frac12}(x)
,
\end{eqnarray}
where $x\in\mathbb C\setminus((-\infty,-1]\cup[1,\infty))$ and the relation between 
the antisymmetric Jacobi function
of the first kind 
and the associated Legendre and
Gegenbauer function 
of the first kinds is
\begin{eqnarray}
&&\hspace{-5.7cm}P_\gamma^{(\alpha,-\alpha)}(z)=\frac{\Gamma(\alpha+\gamma+1)}
{\Gamma(\gamma+1)}\left(\frac{z+1}{z-1}\right)^{\frac12\alpha}P_\gamma^{-\alpha}(z)
\nonumber\\
&&\hspace{-3.825cm}=
\frac{\Gamma(2\alpha+1)
\Gamma(\gamma-\alpha+1)}{2^\alpha\Gamma(\gamma+1)\Gamma(\alpha+1)}
(z+1)^{\alpha}
C_{\gamma-\alpha}^{\alpha+\frac12}(z),
\end{eqnarray}
where $z\in\mathbb C\setminus(-\infty,1]$.
These are obtained by comparing \eqref{Jac1} with \eqref{FerrersPdefnGauss2F1} and
\eqref{associatedLegendrefunctionP}.
\end{rem}

\noindent
\begin{rem}One has the following quadratic transformations for the symmetric Jacobi functions of the first kind 
which can be found in \cite[Theorem 4.1]{Szego}. Let $z\in\CC\setminus(-\infty,1]$, $\gamma,\alpha\in\CC$, $\alpha+\gamma\not\in-\mathbb N$. Then
\begin{eqnarray}
&&\hspace{-5.65cm}P_{2\gamma}^{(\alpha,\alpha)}(z)=
\frac{\sqrt{\pi}\,\Gamma(\alpha+2\gamma+1)}{2^{2\gamma}\Gamma(\gamma+\frac12)\Gamma(\alpha+\gamma+1)}P_\gamma^{(\alpha,-\frac12)}(2z^2-1),
\label{P2g}
\end{eqnarray}
where $\alpha+2\gamma\not\in-\mathbb N$,
$\gamma\not\in-\mathbb N+\frac12$, and
\begin{eqnarray}
&&\hspace{-5.5cm}P_{2\gamma+1}^{(\alpha,\alpha)}(z)=\frac{\sqrt{\pi}\,\Gamma(\alpha+2\gamma+2)z}{2^{2\gamma+1}\Gamma(\gamma+\frac32)\Gamma(\alpha+\gamma+1)}P_\gamma^{(\alpha,\frac12)}(2z^2-1),
\label{P2gp}
\end{eqnarray}
where 
$\alpha+2\gamma+1\not\in-\mathbb N$,
$\gamma\not\in-\mathbb N-\frac12$.
The restrictions on the parameters 
come directly by applying the restrictions on the parameters in Theorem \ref{bigPthm} to the Jacobi functions of the first kind on both sides of the relations.
\end{rem}
Below we present some identities which involve 
symmetric and antisymmetric Jacobi functions of
the second kind
\begin{thm}
Two equivalent relations between the symmetric 
Jacobi function of the
second kind and the associated Legendre function of the second kind are given by
\begin{eqnarray}
&&\hspace{-7.1cm} Q_{\gamma}^{(\alpha,\alpha)}(z)
=\frac
{2^{\alpha}\expe^{i\pi\alpha}
\Gamma(\alpha+\gamma+1)}
{\Gamma(\gamma+1)(z^2-1)^{\frac12\alpha}}
Q_{\alpha+\gamma}^{-\alpha}(z),
\label{JacQLeg}\\
&&\hspace{-7.1cm}Q_{\gamma}^{(\alpha,\alpha)}(z)=\frac{2^{\alpha}\expe^{-i\pi\alpha}
\Gamma(\alpha+\gamma+1)}
{\Gamma(2\alpha+\gamma+1)(z^2-1)^{\frac12\alpha}}
Q_{\alpha+\gamma}^{\alpha}(z),
\label{JacQLegb}
\end{eqnarray}
where $\alpha+\gamma\not\in-\mathbb N$. Also,
two equivalent
relations between  antisymmetric Jacobi functions of the second kind and the associated Legendre function of the second kind are given by
\begin{eqnarray}
&& \hspace{-6.2cm}Q_{\gamma}^{(\alpha,-\alpha)}(z)
=\frac
{\expe^{-i\pi\alpha}
\Gamma(\gamma-\alpha+1)}
{\Gamma(\gamma+1)}
\left(\frac{z+1}{z-1}\right)^{\frac12\alpha}
Q_{\gamma}^{\alpha}(z),
\label{JacasQLeg}\\
&&\hspace{-6.2cm}{Q_\gamma^{(-\alpha,\alpha)}(z)=\frac{\expe^{-i\pi\alpha}\Gamma(\gamma-\alpha+1)}{\Gamma(\gamma+1)}\left(\frac{z-1}{z+1}\right)^{\frac12\alpha}Q_\gamma^\alpha(z),}
\end{eqnarray}
where $\gamma-\alpha\not\in-\mathbb N$.
\end{thm}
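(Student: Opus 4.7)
The plan is to prove each identity by matching two explicit single Gauss hypergeometric representations: the four-fold menu for $Q_\gamma^{(\alpha,\beta)}$ in Theorem~\ref{thmQ} on one side, and the two single-hypergeometric representations \eqref{for:lf2k}, \eqref{Qdefntwodivide1mz} for $Q_\nu^\mu$ on the other. The branch convention $(z^2-1)^{\mu/2}=(z-1)^{\mu/2}(z+1)^{\mu/2}$ stated in the preliminaries makes the prefactor algebra routine, and the hypergeometric arguments on both sides can be forced to coincide by at most one application of Euler \eqref{Eulertran} or Pfaff \eqref{Pfafftran}.

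For \eqref{JacQLegb}, I would begin with representation \eqref{dJsk4} of $Q_\gamma^{(\alpha,\alpha)}(z)$ and compare it to \eqref{Qdefntwodivide1mz} with $(\nu,\mu)=(\alpha+\gamma,\alpha)$; the hypergeometric parameters and argument $2/(1-z)$ already agree, and the remaining factor $(z-1)^{\alpha/2}(z+1)^{\alpha/2}=(z^2-1)^{\alpha/2}$ together with the $e^{i\pi\mu}$ phase gives the stated prefactor with no transformation required. For \eqref{JacQLeg} I would again use \eqref{dJsk4}, but now pair it with \eqref{Qdefntwodivide1mz} taken at $(\nu,\mu)=(\alpha+\gamma,-\alpha)$. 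The numerator parameters $\{\alpha+\gamma+1,\gamma+1\}$ coming from the Legendre side must be matched to $\{\alpha+\gamma+1,2\alpha+\gamma+1\}$ on the Jacobi side; Euler's transformation \eqref{Eulertran} supplies exactly the factor $(1-2/(1-z))^{\alpha}=((z+1)/(z-1))^{\alpha}$ needed to absorb the mismatch, and the residual powers collapse to $(z^2-1)^{\alpha/2}$ by the branch convention.

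For the antisymmetric pair, I would use representation \eqref{dJsk1} of $Q_\gamma^{(\alpha,-\alpha)}(z)$ and the single Gauss hypergeometric representation \eqref{Qdefntwodivide1mz} of $Q_\gamma^\alpha$; the hypergeometric arguments and parameters coincide immediately, and the only work is to verify that the prefactors match after applying the branch convention and absorbing the phases, which yields \eqref{JacasQLeg} directly. For the final identity on $Q_\gamma^{(-\alpha,\alpha)}$, I would pass through representation \eqref{dJsk3}, which naturally carries the hypergeometric argument $2/(1+z)$; to put $Q_\gamma^\alpha$ into the same form I would first apply Pfaff \eqref{Pfafftran} to \eqref{Qdefntwodivide1mz} (which sends $2/(1-z)\mapsto 2/(1+z)$), and then Euler \eqref{Eulertran} to identify $\{\gamma+1,\gamma-\alpha+1\}$ with $\{\gamma+1,\gamma+\alpha+1\}$, producing the factor $((z-1)/(z+1))^{\alpha}$ that matches the stated prefactor.

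The only subtle point, which I expect to be the main obstacle, is bookkeeping of the multi-valued powers and the $e^{\pm i\pi\alpha}$ phases when combining the various hypergeometric representations; the convention for $(z^2-1)^\alpha$ fixed in the preliminaries and the phase built into the definitions \eqref{for:lf2k}, \eqref{Qdefntwodivide1mz} of $Q_\nu^\mu$ is what makes all four identities come out with the signs and phases written in the statement, and care is required to ensure that the Euler/Pfaff transformations are applied in a domain where these branches are the correct ones, namely $z\in\CC\setminus(-\infty,1]$.
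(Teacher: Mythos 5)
Your proposal is correct and takes essentially the same approach as the paper: each identity is obtained by matching a single Gauss hypergeometric representation of $Q_\gamma^{(\alpha,\pm\alpha)}$ from Theorem~\ref{thmQ} against the representation \eqref{Qdefntwodivide1mz} of $Q_\nu^\mu$ and comparing prefactors under the fixed branch convention. The only cosmetic difference is that your choice of representations (\eqref{dJsk4}, \eqref{dJsk1}, \eqref{dJsk3}) together with Euler/Pfaff transformations makes all gamma factors cancel directly, whereas the paper compares \eqref{dJsk1} and \eqref{dJsk2} and invokes the Legendre duplication formula.
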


\begin{proof}
By comparing \eqref{dJsk1} and \eqref{dJsk2} with \eqref{Qdefntwodivide1mz} 
and by using the Legendre duplication formula \cite[\href{http://dlmf.nist.gov/4.4.E5}{(5.5.5)}]{NIST:DLMF} 
one can obtain all these formulas in a straightforward way.
\end{proof}

See \cite[Section 3, (A.14)]{Cohl12pow} for an interesting
application of the symmetric relation for associated Legendre functions of the second kind.
\begin{rem}
Observe that by identifying \eqref{JacQLeg} and \eqref{JacQLegb} and
for $z\in \mathbb C\setminus [-1,1]$ one has
\[
\hspace{1cm}Q_\gamma^{(\alpha,\alpha)}(z)=
\frac{2^{2\alpha}}{(z^2-1)^{\alpha}} 
\left\{ \begin{array}{ll}
\displaystyle Q_{\gamma+2\alpha}^{(-\alpha,-\alpha)}(z), & \qquad\mathrm{if}\ z\in\CC\setminus[-1,1]\ s.t.\ \Re z\ge 0,\\[7pt]
\displaystyle \expe^{2\pi i\alpha} Q_{\gamma+2\alpha}^{(-\alpha,-\alpha)}(z), & \qquad\mathrm{if}\ z\in\CC\setminus[-1,1]\ s.t. \ \Re z<0\ and \ \Im z<0, \nonumber\\[7pt]
\displaystyle \expe^{-2\pi i\alpha} Q_{\gamma+2\alpha}^{(-\alpha,-\alpha)}(z), & \qquad\mathrm{if}\ z\in\CC\setminus[-1,1]\ s.t. \ \Re z<0\ and \ \Im z\ge 0, \nonumber
\end{array} \right.
\]
where the principal branches of complex powers are taken.
\end{rem}
\medskip
\begin{thm}
Let $\alpha,\gamma\in\mathbb C$, $z\in\mathbb C\setminus[-1,1]$, 
$\alpha+\gamma\not\in-\mathbb N$.
Then the relations between the symmetric and antisymmetric Jacobi
functions of the second kind to the Gegenbauer function
of the second kind is given by
\begin{equation}
\hspace{1.0cm}Q_\gamma^{(\alpha,\alpha)}(z)=
\expe^{-i\pi(\alpha+\frac12)}\sqrt{\pi}\,2^{2\alpha}
\frac{\Gamma(\alpha+\frac12)\Gamma(\alpha+\gamma+1)}
{\Gamma(2\alpha+\gamma+1)}D_\gamma^{\alpha+\frac12}(z),
\label{GegDsym}
\end{equation}
where 
$\alpha\in\mathbb C\setminus\{-\frac12,-\frac32,-\frac52,\ldots\}$
and
\begin{equation}
\hspace{1.0cm}Q_\gamma^{(\alpha,-\alpha)}(z)=\expe^{i\pi(\alpha-\frac12)}2^{2\gamma-\alpha+1}
\frac{\Gamma(\alpha+\gamma+1)\Gamma(\frac12-\alpha)\Gamma(\gamma+\frac32)}
{\Gamma(2\gamma+2)(z-1)^\alpha}D_{\alpha+\gamma}^{\frac12-\alpha}(z),
\label{GegDasym}
\end{equation}
where $\alpha\in\mathbb C\setminus\{\frac12,\frac32,\frac52,\ldots\}$, 
$\gamma\in\mathbb C\setminus\{-\frac32,-\frac52,-\frac72,\ldots\}$.
\end{thm}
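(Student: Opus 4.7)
The plan is to reduce both identities to results already established in this section, so that no fresh Gauss hypergeometric manipulation is needed. The key ingredients are the relations \eqref{JacQLeg} and \eqref{JacasQLeg} linking the symmetric and antisymmetric Jacobi functions of the second kind to the associated Legendre function $Q_\nu^\mu$, together with the identity \eqref{DQrel} linking the Gegenbauer function of the second kind $D_\lambda^\mu$ to $Q_\nu^\mu$.

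For \eqref{GegDsym}, first apply \eqref{JacQLeg} to write $Q_\gamma^{(\alpha,\alpha)}(z)$ as a prefactor times $Q_{\alpha+\gamma}^{-\alpha}(z)$. Then invert \eqref{DQrel} at $\mu=\alpha+\tfrac12$ and $\lambda=\gamma$, so that $\tfrac12-\mu=-\alpha$ and $\lambda+\mu-\tfrac12=\alpha+\gamma$ match the $Q$ on hand; this expresses $Q_{\alpha+\gamma}^{-\alpha}(z)$ as a prefactor times $D_\gamma^{\alpha+\frac12}(z)$. When the two prefactors are multiplied, the $(z^2-1)^{\alpha/2}$ factors cancel, the $\Gamma(\gamma+1)$ factors cancel, and the combined phase is $\expe^{i\pi\alpha}\cdot\expe^{-2\pi i(\alpha+1/4)}=\expe^{-i\pi(\alpha+1/2)}$, producing the stated identity with the expected $\sqrt{\pi}\,2^{2\alpha}\Gamma(\alpha+\tfrac12)\Gamma(\alpha+\gamma+1)/\Gamma(2\alpha+\gamma+1)$ coefficient.

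For \eqref{GegDasym}, the strategy is identical: use \eqref{JacasQLeg} to bring $Q_\gamma^{(\alpha,-\alpha)}(z)$ into the form prefactor times $Q_\gamma^{\alpha}(z)$, and then invert \eqref{DQrel} at $\mu=\tfrac12-\alpha$, $\lambda=\alpha+\gamma$ so that $\tfrac12-\mu=\alpha$ and $\lambda+\mu-\tfrac12=\gamma$. Multiplying the resulting prefactors, the powers of $z\pm1$ simplify via $(z+1)^{\alpha/2}/\bigl((z-1)^{\alpha/2}(z^2-1)^{\alpha/2}\bigr)=1/(z-1)^\alpha$, the $\Gamma(\gamma-\alpha+1)$ factors cancel, and the phase collapses to $\expe^{-i\pi\alpha}\cdot\expe^{-2\pi i(1/4-\alpha)}=\expe^{i\pi(\alpha-1/2)}$. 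The surviving gamma factor is $\sqrt{\pi}/\Gamma(\gamma+1)$, and applying Legendre's duplication formula $\Gamma(2\gamma+2)=2^{2\gamma+1}\Gamma(\gamma+1)\Gamma(\gamma+\tfrac32)/\sqrt{\pi}$ rewrites this as $2^{2\gamma+1}\Gamma(\gamma+\tfrac32)/\Gamma(2\gamma+2)$, which combines with the pre-existing $2^{-\alpha}$ to produce the factor $2^{2\gamma-\alpha+1}$ in \eqref{GegDasym}.

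The only real work is bookkeeping: tracking the branches of $(z\pm 1)^{\cdot}$ and $(z^2-1)^{\cdot}$ under the paper's fixed branch convention, and correctly carrying the phase $\expe^{2\pi i(\mu-1/4)}$ coming from \eqref{DQrel} through each substitution. The parameter restrictions stated in the theorem are precisely the union of the restrictions on the intermediate relations \eqref{JacQLeg}, \eqref{JacasQLeg} and \eqref{DQrel} at the chosen specializations, so no extra hypotheses need to be introduced.
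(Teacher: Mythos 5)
Your proposal is correct: I checked both prefactor computations (the phases $\expe^{i\pi\alpha}\expe^{-2\pi i(\alpha+\frac14)}=\expe^{-i\pi(\alpha+\frac12)}$ and $\expe^{-i\pi\alpha}\expe^{-2\pi i(\frac14-\alpha)}=\expe^{i\pi(\alpha-\frac12)}$, the cancellation of $(z^2-1)^{\frac12\alpha}$, respectively the collapse to $(z-1)^{-\alpha}$ under the paper's branch convention, and the duplication step $\sqrt{\pi}/\Gamma(\gamma+1)=2^{2\gamma+1}\Gamma(\gamma+\tfrac32)/\Gamma(2\gamma+2)$), and they reproduce \eqref{GegDsym} and \eqref{GegDasym} exactly. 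However, your route is genuinely different from the paper's: the paper proves the theorem in one step by setting $\beta=\pm\alpha$ in the single Gauss hypergeometric representation \eqref{dJsk1} of $Q_\gamma^{(\alpha,\beta)}$ and comparing, after Euler's transformation, with the second representation \eqref{GegDhyper} of $D_\lambda^\mu$, whose ${}_2F_1$ has the same argument $\frac{2}{1-z}$; you instead compose the previously established Jacobi-to-Legendre relations \eqref{JacQLeg} and \eqref{JacasQLeg} with the inverted $D$--$Q$ interrelation \eqref{DQrel} and Legendre duplication. What your approach buys is that no fresh hypergeometric manipulation is needed and the identities are exhibited as consequences of the already-proved interrelation network; what it costs is that the argument passes through intermediate functions whose validity conditions are not literally those of the theorem: in the antisymmetric case both \eqref{JacasQLeg} and \eqref{DQrel} (with $\lambda+2\mu=\gamma-\alpha+1$) require $\gamma-\alpha\not\in-\mathbb N$, a hypothesis absent from the statement, and in the symmetric case the inversion of \eqref{DQrel} divides by $\Gamma(2\alpha+\gamma+1)$. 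So your closing claim that the stated restrictions are exactly the union of the intermediate ones is slightly too strong; the fix is routine (both sides are analytic in the parameters, so the extraneous condition $\gamma-\alpha\not\in-\mathbb N$ is removed by analytic continuation, and $2\alpha+\gamma\not\in-\mathbb N$ is already implicit in the definition of $D_\gamma^{\alpha+\frac12}$), but it should be said explicitly. The paper's direct comparison avoids this bookkeeping since no intermediate gamma factors need to cancel.
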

\begin{proof}
Start with the definition of the Jacobi function
of the second kind \eqref{dJsk1} and take
$\beta=\alpha$. Then comparing \eqref{GegDhyper} using
Euler's $(z\mapsto z)$ transformation
\cite[\href{http://dlmf.nist.gov/15.8.E1}{(15.8.1)}]{NIST:DLMF} produces 
\eqref{GegDsym}.
In order to produce \eqref{GegDasym}, start 
with \eqref{dJsk1} and take
$\beta=-\alpha$. Then compare \eqref{GegDhyper} using
Euler's $(z\mapsto z)$ transformation
\cite[\href{http://dlmf.nist.gov/15.8.E1}{(15.8.1)}]{NIST:DLMF}. This completes the
proof.
\end{proof}

\noindent{One has the following quadratic transformations for symmetric Jacobi functions of the second kind.
\begin{thm}Let $z\in\CC\setminus[-1,1]$, $\gamma,\alpha\in\CC$, $\alpha+\gamma\not\in-\mathbb N$. Then
\begin{eqnarray}
&&\hspace{-5.5cm}Q_{2\gamma}^{(\alpha,\alpha)}(z)=
\frac{\sqrt{\pi}\,\Gamma(\alpha+2\gamma+1)}{2^{2\gamma}\Gamma(\gamma+\frac12)\Gamma(\alpha+\gamma+1)}Q_\gamma^{(\alpha,-\frac12)}(2z^2-1),
\label{Q2g}
\end{eqnarray}
where $\alpha+2\gamma\not\in-\mathbb N$,
$\gamma\not\in-\mathbb N+\frac12$, and
\begin{eqnarray}
&&\hspace{-5.5cm}Q_{2\gamma+1}^{(\alpha,\alpha)}(z)=\frac{\sqrt{\pi}\,\Gamma(\alpha+2\gamma+2)z}{2^{2\gamma+1}\Gamma(\gamma+\frac32)\Gamma(\alpha+\gamma+1)}Q_\gamma^{(\alpha,\frac12)}(2z^2-1),
\label{Q2gp}
\end{eqnarray}
where 
$\alpha+2\gamma+1\not\in-\mathbb N$,
$\gamma\not\in-\mathbb N-\frac12$.
\end{thm}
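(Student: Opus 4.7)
The plan is to identify both sides of each identity as elements of the two-dimensional solution space of the Jacobi ODE \eqref{Jacde} with parameters $(\alpha,\alpha,2\gamma)$ or $(\alpha,\alpha,2\gamma+1)$, mirroring the strategy by which \eqref{P2g} and \eqref{P2gp} follow from Szeg\H{o}'s quadratic transformation. A direct chain-rule computation under the substitution $u=2z^2-1$ shows that if $g(u)$ satisfies \eqref{Jacde} with parameters $(\alpha,-\tfrac12,\gamma)$ then $g(2z^2-1)$ satisfies the $(\alpha,\alpha,2\gamma)$-equation in $z$, while if $g(u)$ satisfies the $(\alpha,\tfrac12,\gamma)$-equation then $z\,g(2z^2-1)$ satisfies the $(\alpha,\alpha,2\gamma+1)$-equation; the verification reduces, via $16z^2(1-z^2)=4(1-u^2)$, to matching coefficients of $u$ and of the constant term in \eqref{Jacde}. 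In particular $Q_{\gamma}^{(\alpha,-\frac12)}(2z^2-1)$ is a linear combination of $\{P_{2\gamma}^{(\alpha,\alpha)}(z),Q_{2\gamma}^{(\alpha,\alpha)}(z)\}$, and $z\,Q_{\gamma}^{(\alpha,\frac12)}(2z^2-1)$ of $\{P_{2\gamma+1}^{(\alpha,\alpha)}(z),Q_{2\gamma+1}^{(\alpha,\alpha)}(z)\}$.

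Using the asymptotic formula \eqref{Qinfty}, the function $Q_{\gamma}^{(\alpha,-\frac12)}(2z^2-1)$ decays as $O(z^{-2\alpha-2\gamma-1})$ and $z\,Q_{\gamma}^{(\alpha,\frac12)}(2z^2-1)$ as $O(z^{-2\alpha-2\gamma-2})$ as $|z|\to\infty$, matching the decay of $Q_{2\gamma}^{(\alpha,\alpha)}(z)$ and $Q_{2\gamma+1}^{(\alpha,\alpha)}(z)$ respectively but not the polynomial-order growth (generically $\sim z^{2\gamma}$ and $\sim z^{2\gamma+1}$) of the corresponding $P$-functions. Hence the $P$-component must vanish, and each side is a scalar multiple of the appropriate Jacobi function of the second kind. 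The scalar is pinned down by equating leading asymptotic coefficients; in the even case this reduces to
\[
\frac{2^{2\alpha+2\gamma}\,\Gamma(\alpha+2\gamma+1)^{2}}{\Gamma(2\alpha+4\gamma+2)}=c\cdot\frac{\Gamma(\alpha+\gamma+1)\,\Gamma(\gamma+\tfrac12)}{2\,\Gamma(\alpha+2\gamma+\tfrac32)},
\]
and applying Legendre's duplication formula \cite[\href{http://dlmf.nist.gov/5.5.E5}{(5.5.5)}]{NIST:DLMF} in the form $\Gamma(2\alpha+4\gamma+2)=\tfrac{2^{2\alpha+4\gamma+1}}{\sqrt{\pi}}\Gamma(\alpha+2\gamma+1)\Gamma(\alpha+2\gamma+\tfrac32)$ causes the $\Gamma(\alpha+2\gamma+\tfrac32)$ to cancel, yielding exactly $c=\sqrt{\pi}\,\Gamma(\alpha+2\gamma+1)/(2^{2\gamma}\Gamma(\gamma+\tfrac12)\Gamma(\alpha+\gamma+1))$ as claimed in \eqref{Q2g}. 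The odd case \eqref{Q2gp} is handled identically, replacing $\Gamma(2\alpha+4\gamma+2)$ by $\Gamma(2\alpha+4\gamma+4)$ and carrying the extra factor of $z$ through the asymptotic.

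The main obstacle is purely bookkeeping, specifically recognizing that duplication must be applied to $\Gamma(2\alpha+4\gamma+2)=\Gamma(2(\alpha+2\gamma+1))$ and not to the $\Gamma(\alpha+2\gamma+\tfrac32)$ in the target expression, since the latter route does not produce a simplification. As an independent check one may apply the quadratic transformation \cite[\href{http://dlmf.nist.gov/15.8.E15}{(15.8.15)}]{NIST:DLMF}, $\hyp{2}{1}{a,b}{2b}{w}=(1-w/2)^{-a}\hyp{2}{1}{\tfrac{a}{2},\tfrac{a+1}{2}}{b+\tfrac12}{w^2/(2-w)^2}$, directly to the representation \eqref{dJsk2} of the left-hand side with $w=2/(1+z)$: the substitutions $1-w/2=z/(1+z)$ and $w/(2-w)=1/z$ convert it into the same hypergeometric in $1/z^2$ that \eqref{dJsk2} yields for $Q_\gamma^{(\alpha,\mp\frac12)}(2z^2-1)$, with the duplication formula again producing the constant.
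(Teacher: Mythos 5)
Your argument is correct, but it is not the route the paper takes. The paper's proof is a one-step hypergeometric identification: write the left-hand sides of \eqref{Q2g}, \eqref{Q2gp} via \eqref{dJsk1}, observe that the resulting ${}_2F_1$ has $c=2b$, apply the quadratic transformation \cite[\href{http://dlmf.nist.gov/15.8.E14}{(15.8.14)}]{NIST:DLMF}, and recognize the right-hand sides through \eqref{dJsk2}, \eqref{dJsk1}; your closing ``independent check'' (the transformation \cite[\href{http://dlmf.nist.gov/15.8.E15}{(15.8.15)}]{NIST:DLMF} applied to \eqref{dJsk2} with $w=2/(1+z)$) is essentially that proof in a slightly different dress. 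Your main argument instead characterizes $Q_\gamma^{(\alpha,\mp\frac12)}(2z^2-1)$ (resp.\ $z\,Q_\gamma^{(\alpha,\frac12)}(2z^2-1)$) as solutions of the symmetric Jacobi equation \eqref{Jacde} of degree $2\gamma$ (resp.\ $2\gamma+1$) — your chain-rule computation and eigenvalue matching are correct — then uses the recessive behavior at infinity from \eqref{Qinfty} to kill the $P$-component and the leading coefficient plus Legendre duplication to fix the constant; I have checked that both constants come out exactly as in \eqref{Q2g}, \eqref{Q2gp}. What this buys is independence from any table of quadratic ${}_2F_1$ transformations, at the cost of a little unstated bookkeeping that the paper's computation avoids: the identification of the composite with a combination $A\,P_{2\gamma}^{(\alpha,\alpha)}+B\,Q_{2\gamma}^{(\alpha,\alpha)}$ should be made on the simply connected domain $\CC\setminus(-\infty,1]$ (and then continued to $\CC\setminus[-1,1]$), and the step ``decay forces $A=0$'' together with the asymptotic normalization is valid only for generic $(\alpha,\gamma)$ (one needs the exponents $2\gamma$ and $-2\alpha-2\gamma-1$ distinct, the dominant coefficient of $P_{2\gamma}^{(\alpha,\alpha)}$ nonzero, and the coefficient in \eqref{Qinfty} nondegenerate), after which the stated parameter ranges follow by analytic continuation in $(\alpha,\gamma)$ — a maneuver the paper itself uses elsewhere, so this is a presentational gap rather than a mathematical one.
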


\begin{proof}
Starting with the left-hand sides of 
\eqref{Q2g}, \eqref{Q2gp} using the Gauss hypergeometric definition \eqref{dJsk1},
the ${}_2F_1$'s become of a form 
where $c=2b$. Then for both equations we use the quadratic
transformation of the Gauss hypergeometric function \cite[\href{http://dlmf.nist.gov/15.8.E14}{(15.8.14)}]{NIST:DLMF}. This transforms the ${}_2F_1$ to a form which is recognizable with the right-hand sides through \eqref{dJsk2}, \eqref{dJsk1}, respectively. This completes the proof.
The restrictions on the parameters 
come directly by applying the restrictions on the parameters in Theorem \ref{thmQ} to the Jacobi functions of the second kind on both sides of the relations.
\end{proof}

There is also an interesting alternative additional 
quadratic transformation for the Jacobi function 
of the second kind with $\alpha=\pm \frac12$. Note that 
there does not seem to be a corresponding formula
for the Jacobi function of the first kind since in 
this case the functions which would appear 
on the left-hand side would be a sum of two Gauss hypergeometric
functions.

\begin{thm}
\label{altquadQalpha}
Let $z\in\mathbb C$ such that $|z|<1$, $\beta,\gamma\in\mathbb C$ such that 
$\beta+\gamma+\frac12\not\in-\mathbb N_0$. Then
\begin{eqnarray}
&&\hspace{-2.0cm}C_{2\gamma+1}^\beta(z)=\frac{2^{2\gamma+2}\Gamma(\beta+\gamma+\frac12)}{\Gamma(-\gamma-\frac12)\Gamma(2\gamma+2)\Gamma(\beta)(1-z^2)^{\beta+\gamma+\frac12}}
Q_{-\gamma-1}^{(-\frac12,\beta+2\gamma+1)}\left(\frac{1+z^2}{1-z^2}\right),\\
&&\hspace{-2.0cm}C_{2\gamma}^\beta(z)=\frac{2^{2\gamma+1}\Gamma(\beta+\gamma+\frac12)z}{\Gamma(-\gamma+\frac12)\Gamma(2\gamma+1)\Gamma(\beta)(1-z^2)^{\beta+\gamma+\frac12}}
Q_{-\gamma-1}^{(\frac12,\beta+2\gamma)}\left(\frac{1+z^2}{1-z^2}\right).
\end{eqnarray}
\end{thm}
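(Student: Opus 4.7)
The plan is to unfold the right-hand side using a representation from Theorem \ref{thmQ} and then match the resulting hypergeometric to the Gegenbauer side via a single quadratic transformation of the Gauss hypergeometric function.

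For the first (odd-index) identity I would invoke representation \eqref{dJsk2} with the parameter assignment $(\alpha,\beta,\gamma)\mapsto(-\tfrac12,\beta+2\gamma+1,-\gamma-1)$. A short computation shows that the three hypergeometric parameters collapse to $(-\gamma-\tfrac12,\beta+\gamma+\tfrac12;\beta+\tfrac12)$ and that the overall prefactor exponent $\alpha+\beta+\gamma+1$ becomes $\beta+\gamma+\tfrac12$. Under the substitution $Z=(1+z^2)/(1-z^2)$ one has $Z+1=2/(1-z^2)$ and $2/(Z+1)=1-z^2$, so the factor $(Z+1)^{-\beta-\gamma-1/2}$ produces precisely the $(1-z^2)^{\beta+\gamma+1/2}$ that cancels the denominator in the claimed identity, and the right-hand side reduces to a constant multiple of $\hyp21{-\gamma-\frac12,\beta+\gamma+\frac12}{\beta+\frac12}{1-z^2}$. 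The bridge to $C_{2\gamma+1}^\beta(z)$ is then the quadratic transformation \cite[\href{http://dlmf.nist.gov/15.8.E13}{(15.8.13)}]{NIST:DLMF}, $\hyp21{a,b}{\frac12(a+b+1)}{w}=\hyp21{a/2,b/2}{\frac12(a+b+1)}{4w(1-w)}$, applied in reverse with $(a,b)=(-2\gamma-1,\,2\beta+2\gamma+1)$ and $w=(1-z)/2$, so that $4w(1-w)=1-z^2$ and $\tfrac12(a+b+1)=\beta+\tfrac12$. This reconstructs exactly the ${}_2F_1$ in the definition \eqref{Gegenbauerfuncdef} of $C_{2\gamma+1}^\beta(z)$, and the remaining prefactor identity reduces to the Legendre duplication formula applied to $\Gamma(2\beta+2\gamma+1)$.

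The second (even-index) identity proceeds along the same route with $(\alpha,\beta,\gamma)\mapsto(\tfrac12,\beta+2\gamma,-\gamma-1)$ in \eqref{dJsk2}. The hypergeometric that emerges after the same $Z$-substitution has parameters $(\tfrac12-\gamma,\beta+\gamma+\tfrac12;\beta+\tfrac12;1-z^2)$, which do not yet match the Gegenbauer form $(-\gamma,\beta+\gamma;\beta+\tfrac12)$. I would therefore insert one Euler transformation \eqref{Eulertran}: since $c-a-b=-\tfrac12$ in this case, it produces the factor $(1-(1-z^2))^{-1/2}=1/z$ along with the required parameter swap. This $1/z$ precisely cancels the explicit $z$ in the numerator on the right-hand side of the claim, and the leftover prefactor simplification is again the duplication formula, now applied to $\Gamma(2\beta+2\gamma)$.

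The main obstacle is combinatorial rather than analytical: the manipulation involves several power functions $(1-z^2)^{\beta+\gamma+1/2}$, $(Z\pm 1)^{\bullet}$, and $(z^2)^{-1/2}$ whose branches must be chosen consistently. For $|z|<1$ the point $Z=(1+z^2)/(1-z^2)$ remains off the cut $[-1,1]$, the principal branches used in \eqref{dJsk2} and in \eqref{Eulertran} agree throughout, and the duplication identity closes the gamma-function ledger; checking a base case such as $\gamma=0$ (where $C_1^\beta(z)=2\beta z$) is a convenient sanity check that all signs and branches have been tracked correctly.
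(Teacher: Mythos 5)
Your computations check out, and they amount to a correct verification, but you take a slightly different path than the paper. The paper's proof starts from the representations \eqref{dJsk1} and \eqref{dJsk3} of $Q_{-\gamma-1}^{(\mp\frac12,\,\cdot)}$ and identifies the resulting ${}_2F_1$'s with associated Legendre (equivalently Gegenbauer) functions of the \emph{first} kind evaluated at $\sqrt{(Z-1)/(Z+1)}=z$, i.e.\ it keeps the Jacobi argument $Z$ abstract and uses the quadratic transformation that trades argument $2/(1\pm Z)$ for argument $\sqrt{(Z-1)/(Z+1)}$; the constants are then absorbed in the Legendre--Gegenbauer interrelations. You instead substitute $Z=(1+z^2)/(1-z^2)$ at the outset in \eqref{dJsk2}, which collapses everything to a single ${}_2F_1$ of argument $1-z^2$, and then bridge to the definition \eqref{Gegenbauerfuncdef} with the $w\mapsto 4w(1-w)$ quadratic transformation (DLMF 15.8.13), an Euler step \eqref{Eulertran} for the even case, and the duplication formula for the gammas (I verified both prefactor ledgers close exactly as you claim). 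Your route is arguably more economical, since it never introduces the Legendre functions and makes the origin of the explicit factor $z$ in the even identity transparent; the paper's route has the side benefit of directly yielding the inverted corollary involving $C^{\beta+2\gamma+\bullet}_{-2\gamma-\bullet}\bigl(\sqrt{(z-1)/(z+1)}\bigr)$ that follows it. One caveat: your blanket branch claim is overstated --- for purely imaginary $z$ the point $Z$ lands on the cut $[-1,1]$, and both DLMF 15.8.13 (with $w=(1-z)/2$) and the identification $(z^2)^{-1/2}=1/z$ require $\Re z\ge 0$, so the argument really establishes the identities on the right half of the disc; but this restriction is intrinsic to the statement itself (the two sides have opposite parity behavior in $z$) and is equally unaddressed in the paper's own sketch, so it is not a defect of your method relative to theirs.
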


\begin{proof}
The results are easily verified by starting with 
\eqref{dJsk1}, \eqref{dJsk3}, substituting the related 
values in the Jacobi function of the second kind and 
comparing with associated Legendre functions of the 
first kind with argument $\sqrt{(z-1)/(z+1)}$ and 
utilizing a quadratic transformation of the Gauss 
hypergeometric function which relates the two
completes the proof.
\end{proof}

\begin{rem}
Note that in Theorem \ref{altquadQalpha}, if the argument
of the Jacobi function of the second kind has modulus greater than unity then the argument of the Gegenbauer function of the first kind has modulus less than unity.
\end{rem}

\begin{cor}
Let $z,\beta,\gamma\in\mathbb C$ such that $z\in\mathbb C\setminus[-1,1]$. Then
\begin{eqnarray}
&&\hspace{-1.9cm}Q_\gamma^{(\frac12,\beta)}(z)=
\frac{2^{\beta+3\gamma+\frac52}\Gamma(-2\gamma-1)\Gamma(\gamma+\frac32)\Gamma(\beta+2\gamma+2)}
{\Gamma(\beta+\gamma+\frac32)(z-1)^\frac12(z+1)^{\beta+\gamma+1}}
C_{-2\gamma-2}^{\beta+2\gamma+2}\left(\sqrt{\frac{z-1}{z+1}}\right),
\end{eqnarray}
where $-2\gamma-1,\gamma+\frac32,\beta+2\gamma+2\not\in-\mathbb N_0$, and 
\begin{eqnarray}
&&\hspace{-2.3cm}Q_\gamma^{(-\frac12,\beta)}(z)=
\frac{2^{\beta+3\gamma+\frac12}\Gamma(-2\gamma)\Gamma(\gamma+\frac12)\Gamma(\beta+2\gamma+1)}
{\Gamma(\beta+\gamma+\frac12)(z+1)^{\beta+\gamma+\frac12}}
C_{-2\gamma-1}^{\beta+2\gamma+1}\left(\sqrt{\frac{z-1}{z+1}}\right),
\end{eqnarray}
where $-2\gamma,\gamma+\frac12,\beta+2\gamma+1\not\in-\mathbb N_0$.
\end{cor}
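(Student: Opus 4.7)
The plan is to obtain the Corollary by inverting the two identities in Theorem~\ref{altquadQalpha}, since those identities already express $Q_{-\gamma-1}^{(\mp\frac12,\,\cdot)}$ with argument $\tfrac{1+z^2}{1-z^2}$ as a multiple of a Gegenbauer function of the first kind at $z$. The reading to carry out is therefore a change of variables followed by a relabelling of parameters so that the Jacobi function of the second kind on the left-hand side has unconstrained degree $\gamma$ and a free second Jacobi parameter $\beta$.

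For the first identity, I would start with the second equation of Theorem~\ref{altquadQalpha}, namely
\[
C_{2\gamma_0}^{\beta_0}(z_0)=\frac{2^{2\gamma_0+1}\Gamma(\beta_0+\gamma_0+\tfrac12)\,z_0}{\Gamma(\tfrac12-\gamma_0)\Gamma(2\gamma_0+1)\Gamma(\beta_0)(1-z_0^2)^{\beta_0+\gamma_0+\frac12}}\,Q_{-\gamma_0-1}^{(\frac12,\beta_0+2\gamma_0)}\!\left(\frac{1+z_0^2}{1-z_0^2}\right),
\]
and then substitute $\gamma_0=-\gamma-1$, $\beta_0=\beta+2\gamma+2$, together with the involution $z_0=\sqrt{(w-1)/(w+1)}$, which inverts the map $z_0\mapsto\frac{1+z_0^2}{1-z_0^2}$ and satisfies $1-z_0^2=2/(w+1)$. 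Under this dictionary the Jacobi function on the right becomes $Q_\gamma^{(\frac12,\beta)}(w)$ and the Gegenbauer on the left becomes $C_{-2\gamma-2}^{\beta+2\gamma+2}\!\left(\sqrt{(w-1)/(w+1)}\right)$. Solving algebraically for the $Q$ factor then produces the stated formula after the gamma functions $\Gamma(\gamma+\tfrac32)$, $\Gamma(-2\gamma-1)$, $\Gamma(\beta+2\gamma+2)$ and $\Gamma(\beta+\gamma+\tfrac32)$ fall into the right places and the powers of $2$ combine to $2^{\beta+3\gamma+\frac52}$ against a denominator of $(w-1)^{\frac12}(w+1)^{\beta+\gamma+1}$.

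For the second identity I would repeat the argument starting instead from the first equation of Theorem~\ref{altquadQalpha}, the one relating $C_{2\gamma_0+1}^{\beta_0}$ to $Q_{-\gamma_0-1}^{(-\frac12,\beta_0+2\gamma_0+1)}\bigl(\tfrac{1+z_0^2}{1-z_0^2}\bigr)$. Here the substitution is $\gamma_0=-\gamma-1$, $\beta_0=\beta+2\gamma+1$, with the same involution $z_0=\sqrt{(w-1)/(w+1)}$; the degree of the Gegenbauer becomes $2\gamma_0+1=-2\gamma-1$, its order becomes $\beta+2\gamma+1$, and inverting the coefficient yields $Q_\gamma^{(-\frac12,\beta)}(w)$ on the left with the correct prefactor $2^{\beta+3\gamma+\frac12}\Gamma(-2\gamma)\Gamma(\gamma+\tfrac12)\Gamma(\beta+2\gamma+1)/\bigl(\Gamma(\beta+\gamma+\tfrac12)(w+1)^{\beta+\gamma+\frac12}\bigr)$. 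The absence of a $(w-1)^{1/2}$ factor here is a consequence of the extra $z_0$ in the first formula of the theorem, which cancels the square root in the change of variable.

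I do not expect a serious obstacle: no hypergeometric transformations need to be applied beyond those already used to prove Theorem~\ref{altquadQalpha}, and the main task is careful bookkeeping of powers of $2$, powers of $w\pm 1$, and ratios of gamma functions. The only point that requires attention is the statement of parameter restrictions, which must be read off from the substituted theorem: the three poles avoided in each case correspond to $\Gamma(-2\gamma-1)$, $\Gamma(\gamma+\tfrac32)$, $\Gamma(\beta+2\gamma+2)$ in the first formula and $\Gamma(-2\gamma)$, $\Gamma(\gamma+\tfrac12)$, $\Gamma(\beta+2\gamma+1)$ in the second, and these are exactly the conditions $-2\gamma-1,\gamma+\tfrac32,\beta+2\gamma+2\notin-\mathbb N_0$ and $-2\gamma,\gamma+\tfrac12,\beta+2\gamma+1\notin-\mathbb N_0$ stated in the Corollary.
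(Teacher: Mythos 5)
Your proposal is correct and matches the paper's own proof, which likewise simply inverts Theorem \ref{altquadQalpha}; your substitutions $\gamma_0=-\gamma-1$, $\beta_0=\beta+2\gamma+2$ (respectively $\beta_0=\beta+2\gamma+1$) with $z_0=\sqrt{(z-1)/(z+1)}$ reproduce the stated prefactors and parameter restrictions exactly. One trivial slip in wording: the extra factor of $z_0$ sits in the \emph{second} displayed formula of Theorem \ref{altquadQalpha} (the one used for $Q_\gamma^{(\frac12,\beta)}$, producing the $(z-1)^{1/2}$), not in the first, but your computation treats it correctly.
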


\begin{proof}
Inverting Theorem \ref{altquadQalpha} completes the proof.
\end{proof}

\noindent
Note that the above results imply the following corollary.
\begin{cor}
Let $z,\beta,\gamma\in\mathbb C$ such that $z\in\mathbb C\setminus[-1,1]$, 
$\gamma+\frac32,\beta+\gamma+1\not\in-\mathbb N_0$. Then
\begin{equation}
\hspace{0.9cm}Q_\gamma^{(\frac12,\beta)}(z)=\frac{\Gamma(\gamma+\frac32)\Gamma(\beta+\gamma+1)}{\Gamma(\gamma+1)\Gamma(\beta+\gamma+\frac32)}
\left(\frac{2}{z-1}\right)^\frac12
Q_{\gamma+\frac12}^{(-\frac12,\beta)}(z).
\end{equation}
\end{cor}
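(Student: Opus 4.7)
The plan is to use the two Gegenbauer representations in the preceding corollary to eliminate the Gegenbauer function $C_{-2\gamma-2}^{\beta+2\gamma+2}(\sqrt{(z-1)/(z+1)})$ between the expressions for $Q_\gamma^{(\frac12,\beta)}(z)$ and a suitably shifted $Q_{\gamma+\frac12}^{(-\frac12,\beta)}(z)$.

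First I would take the second formula in the previous corollary, which reads
\[
Q_\gamma^{(-\frac12,\beta)}(z)=
\frac{2^{\beta+3\gamma+\frac12}\Gamma(-2\gamma)\Gamma(\gamma+\frac12)\Gamma(\beta+2\gamma+1)}
{\Gamma(\beta+\gamma+\frac12)(z+1)^{\beta+\gamma+\frac12}}
C_{-2\gamma-1}^{\beta+2\gamma+1}\!\left(\sqrt{\frac{z-1}{z+1}}\right),
\]
and make the substitution $\gamma\mapsto\gamma+\frac12$. This turns the index and order of the Gegenbauer function on the right-hand side into $-2\gamma-2$ and $\beta+2\gamma+2$, respectively, matching exactly the Gegenbauer function appearing in the first formula of the previous corollary (the expression for $Q_\gamma^{(\frac12,\beta)}(z)$).

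Next I would solve the shifted second formula for $C_{-2\gamma-2}^{\beta+2\gamma+2}(\sqrt{(z-1)/(z+1)})$ and substitute into the first formula. What remains is a purely multiplicative comparison of prefactors: ratios of gamma functions, powers of $2$, and powers of $(z\pm 1)$. The power of $(z-1)$ produces exactly the factor $(2/(z-1))^{1/2}$ on the right-hand side of the claim (after accounting for the extra $\sqrt{2}$ from $2^{\beta+3\gamma+\frac52}/2^{\beta+3\gamma+2}$), while the powers of $(z+1)$ cancel identically since $(\beta+\gamma+1)-(\beta+\gamma+1)=0$.

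The main (and only) potential obstacle is bookkeeping on the gamma-function ratio: after the shift, the numerator contains $\Gamma(-2\gamma-1)\Gamma(\gamma+1)\Gamma(\beta+2\gamma+2)$ and the denominator contains $\Gamma(\beta+\gamma+1)$, while the first formula of the previous corollary contributes $\Gamma(-2\gamma-1)\Gamma(\gamma+\frac32)\Gamma(\beta+2\gamma+2)/\Gamma(\beta+\gamma+\frac32)$. The quotient of these is precisely $\Gamma(\gamma+\frac32)\Gamma(\beta+\gamma+1)/(\Gamma(\gamma+1)\Gamma(\beta+\gamma+\frac32))$, which is the prefactor claimed in the statement. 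Finally, the restrictions $\gamma+\frac32\notin-\mathbb N_0$ and $\beta+\gamma+1\notin-\mathbb N_0$ arise automatically from applying the parameter restrictions of the previous corollary (namely $-2\gamma,\gamma+\frac12,\beta+2\gamma+1\notin-\mathbb N_0$) after the shift $\gamma\mapsto\gamma+\frac12$, so no separate analytic-continuation argument is needed.
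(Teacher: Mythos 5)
Your proposal is correct and is essentially the paper's own argument: the paper's proof simply equates the two relations of Theorem \ref{altquadQalpha} after a half-integer shift, while you equivalently eliminate the common Gegenbauer function $C_{-2\gamma-2}^{\beta+2\gamma+2}$ between the two inverted relations of the preceding corollary (shifting $\gamma\mapsto\gamma+\frac12$ in the second), and your prefactor bookkeeping checks out. One small caveat: the intermediate formulas carry the stronger restrictions $-2\gamma-1,\,\beta+2\gamma+2\not\in-\mathbb N_0$ coming from the $\Gamma(-2\gamma-1)$ and $\Gamma(\beta+2\gamma+2)$ factors that subsequently cancel, so the stated range $\gamma+\frac32,\beta+\gamma+1\not\in-\mathbb N_0$ is recovered by noting these singularities are removable (analytic continuation in $\gamma,\beta$), rather than ``automatically'' as you claim.
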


\begin{proof}
Equating the two relations in Theorem \ref{altquadQalpha}
completes the proof.
\end{proof}

\begin{thm}
Let $x\in\mathbb C\setminus((-\infty,-1]\cup[1,\infty))$. Then
the relation between the symmetric and antisymmetric
Jacobi functions of the second kind on-the-cut and
the Ferrers function of the second kind are given by
\begin{eqnarray}
&&\hspace{-6.9cm}
\label{Qcutsym}\boro{{\sf Q}_\gamma^{(\alpha,\alpha)}(x)
=\frac{2^\alpha\Gamma(\alpha+\gamma+1)}{\Gamma(\gamma+1)
(1-x^2)^{\frac12\alpha}}{\sf Q}_{\gamma+\alpha}^{-\alpha}(x),}
\label{FerQsym}\\
&&\hspace{-6.9cm}\boro{{\sf Q}_\gamma^{(\alpha,-\alpha)}(x)
=\frac{\Gamma(\alpha+\gamma+1)}{\Gamma(\gamma+1)}
\left(\frac{1+x}{1-x}\right)^{\frac12\alpha}
{\sf Q}_\gamma^{-\alpha}(x),}
\label{FerQasym}
\end{eqnarray}
where $\alpha+\gamma\not\in-\mathbb N$,
\begin{eqnarray}
&&\hspace{-7.1cm}\boro{{\sf Q}_\gamma^{(-\alpha,\alpha)}(x)
=\frac{\Gamma(\gamma-\alpha+1)}{\Gamma(\gamma+1)}
\left(\frac{1-x}{1+x}\right)^{\frac12\alpha}
{\sf Q}_\gamma^{\alpha}(x),}
\label{lastQcut}
\end{eqnarray}
where $\gamma-\alpha\not\in-\mathbb N$.
\end{thm}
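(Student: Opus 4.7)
The plan is to derive each of the three identities by substituting the corresponding off-the-cut relations \eqref{JacQLeg}, \eqref{JacasQLeg}, and the final formula of the same theorem (the one for $Q_\gamma^{(-\alpha,\alpha)}$) into the boundary-value definition \eqref{Qcutdef}, and then recognizing the resulting linear combination of $Q_\nu^\mu(x\pm i0)$ as a Ferrers function of the second kind through \eqref{FerrersQdef}. All the real work lies in tracking the phase picked up by the various power functions when the argument crosses from the upper and lower half-planes onto the segment $(-1,1)$.

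Under the paper's branch convention $(z^2-1)^{\alpha/2}=(z+1)^{\alpha/2}(z-1)^{\alpha/2}$, I first record that for $x\in(-1,1)$ the three power functions appearing in the off-cut relations evaluate, at $z=x\pm i0$, to
\[
(1-x^2)^{\alpha/2}\,e^{\pm i\pi\alpha/2},\quad
\left(\frac{1+x}{1-x}\right)^{\alpha/2}\!e^{\mp i\pi\alpha/2},\quad
\left(\frac{1-x}{1+x}\right)^{\alpha/2}\!e^{\pm i\pi\alpha/2},
\]
respectively, since $(z+1)^{\alpha/2}$ is continuous across the cut while $(z-1)^{\pm\alpha/2}$ contributes the phase $e^{\pm i\pi\alpha/2}$. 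For \eqref{FerQsym} I substitute the first of these into \eqref{JacQLeg} and form the average \eqref{Qcutdef}: the outer phases $e^{\pm i\pi\alpha}$, the constant factor $e^{i\pi\alpha}$ inside \eqref{JacQLeg}, and the branch phases $e^{\mp i\pi\alpha/2}$ from $(z^2-1)^{-\alpha/2}$ combine so that the resulting average of $Q_{\alpha+\gamma}^{-\alpha}(x\pm i0)$ is precisely the pattern appearing in \eqref{FerrersQdef} with order $\mu=-\alpha$, yielding ${\sf Q}_{\alpha+\gamma}^{-\alpha}(x)$. The remaining prefactor collapses to $2^\alpha\Gamma(\alpha+\gamma+1)/[\Gamma(\gamma+1)(1-x^2)^{\alpha/2}]$, which is exactly \eqref{FerQsym}.

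For \eqref{FerQasym} I begin with \eqref{JacasQLeg}. Because the target involves ${\sf Q}_\gamma^{-\alpha}$ but \eqref{JacasQLeg} is written in terms of $Q_\gamma^\alpha$, I first use the fact that \eqref{JacQLeg} and \eqref{JacQLegb} are two representations of the same $Q_\gamma^{(\alpha,\alpha)}$ to extract the off-cut symmetry
\[
Q_\nu^\alpha(z)=e^{2i\pi\alpha}\,\frac{\Gamma(\nu+\alpha+1)}{\Gamma(\nu-\alpha+1)}\,Q_\nu^{-\alpha}(z),\qquad z\in\mathbb{C}\setminus[-1,1].
\]
Substituting into \eqref{JacasQLeg} yields a representation of $Q_\gamma^{(\alpha,-\alpha)}$ in terms of $Q_\gamma^{-\alpha}$ with prefactor $e^{i\pi\alpha}\Gamma(\alpha+\gamma+1)/\Gamma(\gamma+1)\cdot((z+1)/(z-1))^{\alpha/2}$; the second entry of the branch table together with \eqref{Qcutdef} and \eqref{FerrersQdef} (again with $\mu=-\alpha$) then produces \eqref{FerQasym}. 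The proof of \eqref{lastQcut} is identical in structure, but starts from the $Q_\gamma^{(-\alpha,\alpha)}$ relation of the same theorem and uses the third entry of the branch table; this time no conversion between $Q_\gamma^\alpha$ and $Q_\gamma^{-\alpha}$ is needed, since the target already involves ${\sf Q}_\gamma^\alpha$.

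The main obstacle is purely the phase bookkeeping: although each branch calculation is elementary, the cumulative cancellation must land exactly on the pattern $e^{\mp i\pi\mu/2}$ embedded in the Ferrers definition \eqref{FerrersQdef}, and the antisymmetric case requires the extra off-cut symmetry for $Q_\nu^{\pm\alpha}$ that is only implicit in the earlier theorem. The stated parameter restrictions $\alpha+\gamma\notin-\mathbb{N}$ for the first two identities and $\gamma-\alpha\notin-\mathbb{N}$ for the third are inherited directly from the off-cut relations being used.
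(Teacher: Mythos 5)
Your proof is correct, but it follows a genuinely different route from the paper. The paper proves these identities by comparing double Gauss hypergeometric representations: it sets $\beta=\pm\alpha$ in the representation \eqref{Qcut1} of ${\sf Q}_\gamma^{(\alpha,\beta)}$ and matches the result term-by-term against a two-term hypergeometric representation of the Ferrers function ${\sf Q}_\nu^\mu$ taken from \cite[Theorem 3.2]{Cohletal2021} (displayed as \eqref{FerrersQ}). You instead push the already-established off-the-cut relations \eqref{JacQLeg}, \eqref{JacasQLeg} (and the $Q_\gamma^{(-\alpha,\alpha)}$ companion) through the boundary-value definitions \eqref{Qcutdef} and \eqref{FerrersQdef}, tracking the branch phases of $(z^2-1)^{\frac12\alpha}$ and $\left((z\pm1)/(z\mp1)\right)^{\frac12\alpha}$ across the cut; your phase table is right, and I checked that the accumulated factors do cancel to give exactly the stated prefactors in all three cases. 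Your auxiliary symmetry $Q_\nu^{\alpha}(z)=\expe^{2\pi i\alpha}\frac{\Gamma(\nu+\alpha+1)}{\Gamma(\nu-\alpha+1)}Q_\nu^{-\alpha}(z)$, extracted by equating \eqref{JacQLeg} with \eqref{JacQLegb}, is the standard order-reflection formula and is legitimately derived within the paper. One small point to make explicit in the third case: in \eqref{Qcutdef} the exponential prefactors carry the \emph{first} Jacobi parameter, so for ${\sf Q}_\gamma^{(-\alpha,\alpha)}$ they become $\expe^{\mp i\pi\alpha}$; this sign flip is precisely what makes the phases close up with the $\mu=\alpha$ Ferrers pattern, and "identical in structure" slightly hides it. As for what each approach buys: the paper's comparison of representations avoids all phase bookkeeping but leans on an external result of Cohl--Park--Volkmer, whereas your argument stays entirely within the paper's own off-cut theorem and definitions and makes the analytic-continuation structure (and the parameter restrictions, which are indeed inherited from the off-cut relations) transparent, at the cost of careful but elementary branch tracking.
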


\begin{proof}
The result follows by taking into account \eqref{Qcut1} and 
cf.~\cite[Theorem 3.2]{Cohletal2021}
\begin{eqnarray}
&&\hspace{-1.4cm} {\sf Q}_\nu^\mu (x) = \frac{\pi}{2\sin(\pi\mu)}
\Bigg[
\cos(\pi(\nu + \mu))
\frac{\Gamma(\nu+\mu+1)}{
\Gamma(\nu-\mu+1)}
{\left( \frac{1+x}{1-x} \right)}^{{\frac12\mu}}
\Ohyp21{-\nu, \nu+1}{1+\mu}{\frac{1\!+\!x}{2}}
\nonumber
\\&&\hspace{5.1cm}
{}-
\cos(\pi\nu)
{\left( \frac{1-x}{1+x} \right)}^{{\frac12\mu}}
\Ohyp21{-\nu, \nu+1}{1 - \mu}{\frac{1\!+\!x}{2}} 
\Bigg]
,
\label{FerrersQ}
\end{eqnarray}
where $\nu \in \mathbb C$, $\mu \in \mathbb C \setminus \mathbb Z$, such
that $\nu + \mu \notin -\mathbb N$.

The formula \eqref{FerQsym} is obtained by taking $\beta=\alpha$, then comparing \eqref{Qcut1} with \eqref{FerrersQ}. 
The other identities follow by applying an analogous method taking $\beta=-\alpha$.
This completes the proof.
\end{proof}
\section{Addition theorems for the Jacobi function of the first kind}
\label{reffirstKoorn}
The Flensted-Jensen--Koornwinder addition theorem for Jacobi functions of the first kind is the extension of the Koornwinder addition theorem for Jacobi polynomials when the degree is allowed to be a complex number. This addition theorem has two separate contexts 
and some interesting special cases. We will refer to the two separate 
contexts as the hyperbolic and trigonometric contexts.
The hyperbolic context
arises when the Jacobi function is analytically continued in 
the complex plane from the ray $[1,\infty)$.
The trigonometric context
arises when the argument of the Jacobi function 
is analytically continued from the
real segment $(-1,1)$. 
First we will present the addition theorem for the Jacobi function of the first kind in the hyperbolic {context}. As we will see, the Jacobi function in the trigonometric context can be obtained from the Jacobi functions in the hyperbolic context (and vice versa).
We now present the most general form of the addition theorem for Jacobi functions of the first kind
in the hyperbolic and trigonometric contexts.

\begin{thm}
\label{Koornaddnhyp}
Let $\gamma,\alpha,\beta\in\mathbb C$, $z_1,z_2\in\mathbb C\setminus(-\infty,1]$,
$x_1,x_2\in\mathbb C\setminus((-\infty,-1]\cup[1,\infty))$,
$x,w\in\mathbb C$,
\begin{eqnarray}
&&\hspace{-1.6cm}Z^\pm:=Z^\pm(z_1,z_2,w,x)=2z_1^2z_2^2+2w^2
(z_1^2-1)(z_2^2-1)\pm 4z_1z_2wx(z_1^2-1)^\frac12(z_2^2-1)^\frac12-1,\label{ZZdef}\\
&&\hspace{-1.56cm}{\sf X}^\pm:={\sf X}^\pm(x_1,x_2,w,x)=2x_1^2x_2^2+2w^2
(1-x_1^2)(1-x_2^2)
\pm 4x_1x_2wx(1-x_1^2)^\frac12
(1-x_2^2)^\frac12-1,\label{XXdef}
\end{eqnarray}
such that the complex variables $\gamma,\alpha,\beta,z_1,z_2,x_1,x_2,x,w$ are in some
yet to be determined neighborhood of the real line.
Then
\begin{eqnarray}
&&\hspace{-0.40cm}P_\gamma^{(\alpha,\beta)}(Z^\pm)
=\frac{\Gamma(\alpha+1)\Gamma(\gamma+1)}{\Gamma(\alpha+\gamma+1)}\sum_{k=0}^\infty
\frac{(\alpha+1)_k(\alpha+\beta+\gamma+1)_k}
{(\alpha+k)(\beta+1)_k(-\gamma)_k}
\nonumber\\
&&\hspace{0.2cm}\times\sum_{l=0}^k
(\mp 1)^{k-l}
\frac{(\alpha+k+l)(-\beta-\gamma)_l}{(\alpha+\gamma+1)_l}
(z_1z_2)^{k-l}\left(({z_1^2-1})({z_2^2-1})\right)^{\frac{k+l}{2}}
\nonumber\\[0.1cm]
&&\hspace{0.7cm}\times
P_{\gamma-k}^{(\alpha+k+l,\beta+k-l)}(2z_1^2\!-\!1)
P_{\gamma-k}^{(\alpha+k+l,\beta+k-l)}(2z_2^2\!-\!1)
w^{k-l}P_{l}^{(\alpha-\beta-1,\beta+k-l)}(2w^2\!-\!1)\frac{\beta\!+\!k\!-\!l}{\beta}C_{k-l}^\beta(x),
\label{addPhypgen}
\end{eqnarray}
\begin{eqnarray}
&&\hspace{-0.29cm}{\sf P}_\gamma^{(\alpha,\beta)}({\sf X}^\pm)
=\frac{\Gamma(\alpha+1)\Gamma(\gamma+1)}{\Gamma(\alpha+\gamma+1)}\sum_{k=0}^\infty
\frac{(\alpha+1)_k(\alpha+\beta+\gamma+1)_k}
{(\alpha+k)(\beta+1)_k(-\gamma)_k}
\nonumber\\
&&\hspace{0.3cm}\times\sum_{l=0}^k
(\mp 1)^{k-l}
\frac{(\alpha+k+l)(-\beta-\gamma)_l}{(\alpha+\gamma+1)_l}
(x_1x_2)^{k-l}\left(({1-x_1^2})({1-x_2^2})\right)^{\frac{k+l}{2}}
 \nonumber\\[0.1cm]
&&\hspace{0.7cm}\times
{\sf P}_{\gamma-k}^{(\alpha+k+l,\beta+k-l)}(2x_1^2\!-\!1)
{\sf P}_{\gamma-k}^{(\alpha+k+l,\beta+k-l)}(2x_2^2\!-\!1)
w^{k-l}{P}_{l}^{(\alpha-\beta-1,\beta+k-l)}(2w^2\!-\!1)\frac{\beta\!+\!k\!-\!l}{\beta}C_{k-l}^\beta(x).
\label{addPtriggen}
\end{eqnarray}
\end{thm}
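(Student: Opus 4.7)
The plan is to derive the trigonometric identity \eqref{addPtriggen} by analytically continuing Koornwinder's addition theorem for Jacobi polynomials \eqref{addPtrigfirst} in the degree parameter, following the strategy of Flensted-Jensen and Koornwinder, and then to obtain the hyperbolic identity \eqref{addPhypgen} by analytically continuing \eqref{addPtriggen} in the spatial variables $x_1,x_2$ across the endpoints $\pm 1$ into $\mathbb{C}\setminus(-\infty,1]$.

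First I would rewrite \eqref{addPtrigfirst} in the variables $x_j=\cos\theta_j$ and $x=\cos\phi$. Expanding the modulus via $|\cos\theta_1\cos\theta_2\pm e^{i\phi}w\sin\theta_1\sin\theta_2|^2 = \cos^2\theta_1\cos^2\theta_2 \pm 2w\cos\phi\,\cos\theta_1\cos\theta_2\sin\theta_1\sin\theta_2 + w^2\sin^2\theta_1\sin^2\theta_2$ and substituting $\sin^2\theta_j=1-x_j^2$, $\cos(2\theta_j)=2x_j^2-1$, the left-hand argument becomes exactly ${\sf X}^{\pm}$ from \eqref{XXdef}, and the right-hand side acquires the shape of the series in \eqref{addPtriggen} truncated at $k=n$. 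Second, I would verify that the infinite series in \eqref{addPtriggen} collapses to this truncated sum when $\gamma=n\in\mathbb{N}_0$: for $k>n$ the factor $1/(-\gamma)_k$ develops a simple pole at $\gamma=n$, but each of the two Jacobi functions ${\sf P}_{\gamma-k}^{(\alpha+k+l,\,\beta+k-l)}(2x_j^2-1)$ vanishes to first order there (by the identity $P_{m}^{(\alpha,\beta)}\equiv 0$ for $m\in-\mathbb{N}$ recorded after \eqref{Pzero}), so the product is analytic and vanishes at $\gamma=n$.

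Third, I would invoke Carlson's theorem in $\gamma$. Both sides of \eqref{addPtriggen} are analytic in $\gamma$ in a right half-plane (modulo the enumerated exceptional parameter values), and a uniform-in-$k$ majorization of the summands by ratios of gamma functions together with the known growth of Jacobi functions ${\sf P}_{\gamma-k}^{(\alpha',\beta')}$ in $\gamma$ yields sub-exponential growth on imaginary lines; agreement on $\mathbb{N}_0$ then forces equality for all admissible complex $\gamma$. The parameters $\alpha,\beta$ can be further liberated from the restriction $\alpha>\beta>-\tfrac12$ in \eqref{addPtrigfirst} by one more analytic continuation argument, since each summand depends meromorphically on $(\alpha,\beta)$.

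Fourth, for the hyperbolic identity \eqref{addPhypgen}, I would treat $x_1,x_2$ as complex variables and continue \eqref{addPtriggen} through the branch points $\pm 1$ into the domain $z_j\in\mathbb{C}\setminus(-\infty,1]$. Under this continuation, the products $(1-x_1^2)(1-x_2^2)$ pass to $(z_1^2-1)(z_2^2-1)$ and the Jacobi functions on-the-cut turn into their analytic continuations off-the-cut via the definitions \eqref{JacPcutdef}, \eqref{Qcutdef}; the arguments ${\sf X}^{\pm}$ and $Z^{\pm}$ are related by the same substitution, as are the inner Jacobi functions evaluated at $2x_j^2-1$ versus $2z_j^2-1$. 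The main obstacle will be establishing the uniform growth bounds needed to apply Carlson's theorem in $\gamma$ (the summands contain two Jacobi factors whose joint asymptotics in $|\gamma|$ must be controlled), and, on the hyperbolic side, tracking the branch choices of the half-integer powers $((z_j^2-1))^{(k+l)/2}$ so that the $\pm i$ factors picked up during the continuation cancel correctly in the symmetric product; the remaining steps reduce to bookkeeping with Pochhammer symbols and the transformations \eqref{Eulertran}, \eqref{Pfafftran}.
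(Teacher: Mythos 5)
Your route is genuinely different from the paper's. The paper does not re-derive the continuous-degree addition theorem from Koornwinder's polynomial formula at all: it takes the Flensted-Jensen--Koornwinder addition theorem for Jacobi functions (Theorem 2.1 of \cite{FlenstedJensenKoorn79}) as the starting point, rewrites the function $\varphi_\lambda^{(\alpha,\beta)}$ of \eqref{FJKJdef} in terms of $P_\gamma^{(\alpha,\beta)}$ via \eqref{phiPrel} with $\lambda=i(\alpha+\beta+2\gamma+1)$, $z_1=\cosh t_1$, $z_2=\cosh t_2$, $w=\cos\psi$, which immediately yields the hyperbolic identity \eqref{addPhypgen}, and then obtains the trigonometric identity \eqref{addPtriggen} by analytic continuation onto the cut using \eqref{JacPcutdef}. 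You instead propose to go in the opposite direction: start from the terminating polynomial identity \eqref{addPtrigfirst}, interpolate in the degree by Carlson's theorem, and then continue in the spatial variables from the trigonometric to the hyperbolic context. Your step showing that the nonterminating series collapses at $\gamma=n\in\mathbb N_0$ is fine (it matches the discussion around \eqref{limitfinite}), and the continuation between the two contexts is legitimate in either direction since ${\sf P}_\gamma^{(\alpha,\beta)}$ is the analytic continuation of $P_\gamma^{(\alpha,\beta)}$ across $(-1,1]$.

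The genuine gap is the Carlson step, which is precisely the hard analytic content that the citation of \cite[Theorem 2.1]{FlenstedJensenKoorn79} supplies and which you acknowledge but do not carry out. Two things are missing. First, before Carlson's theorem can even be invoked you must show that the nonterminating double series defines an analytic function of $\gamma$ in a right half-plane, i.e.\ you need convergence estimates for the series of products ${\sf P}_{\gamma-k}^{(\alpha+k+l,\beta+k-l)}(2x_1^2-1)\,{\sf P}_{\gamma-k}^{(\alpha+k+l,\beta+k-l)}(2x_2^2-1)$ uniformly on compacta in $\gamma$; nothing in your outline produces such bounds. Second, the growth hypothesis of Carlson's theorem (exponential type $<\pi$ along the imaginary direction) is not automatic here: with $x_j=\cos\theta_j$, the Jacobi functions of degree $\gamma-k$ grow like $\expe^{2\theta_j|\Im\gamma|}$, so the summands (and the difference of the two sides) can have type up to $2(\theta_1+\theta_2)$, which exceeds $\pi$ for a large part of the stated range. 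A correct implementation would first restrict $\theta_1,\theta_2$ (and $w$, $\phi$) to a small region where the type is subcritical and the majorization holds, apply Carlson there, and only then continue analytically in the spatial variables to the full domain; as written, "agreement on $\mathbb N_0$ forces equality for all admissible complex $\gamma$" does not follow. Until these estimates are in place the argument is a plausible program rather than a proof, whereas the paper's proof is short exactly because it delegates this convergence and interpolation work to Flensted-Jensen and Koornwinder.
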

\begin{proof}
Start with the form of the Flensted-Jensen--Koornwinder addition theorem in \cite[Theorem 2.1]{FlenstedJensenKoorn79} (see also \cite[(24)]{LiPeng2007}). Define the Flensted-Jensen--Koornwinder--Jacobi function of the first kind \cite[(2.1)]{FlenstedJensenKoorn79} (Flensted-Jensen--Koornwinder refer to this function as the Jacobi function of the first kind) 
\begin{equation}
\varphi_\lambda^{(\alpha,\beta)}(t):=\hyp21{\frac12(\alpha+\beta+1+i\lambda),\frac12(\alpha+\beta+1-i\lambda)}{\alpha+1}{-\sinh^2t},
\label{FJKJdef}
\end{equation}
and express it in terms of 
the Jacobi function of the first kind using 
\begin{equation}
\hspace{0.0cm}\varphi_\lambda^{(\alpha,\beta)}(t)=\frac{\Gamma(\alpha+1)
\Gamma(-\tfrac12(\alpha+\beta-1+i\lambda))}
{\Gamma(\tfrac12(\alpha-\beta+1-i\lambda))}
P_{-\tfrac12(\alpha+\beta+1+i\lambda)}^{(\alpha,\beta)}(\cosh(2t)),
\label{phiPrel}
\end{equation}
which follows by comparing the Gauss hypergeometric
representations of the functions. Replacing
$\lambda=i(\alpha+\beta+2\gamma+1)$ and setting 
$z_1=\cosh t_1$, $z_2=\cosh t_2$ and $w=\cos\psi$ 
produces the form of the addition theorem \eqref{addPhypgen}. Then analytically continuing 
\eqref{addPhypgen} to ${\sf X}^\pm\in(-1,1)$ 
using
\eqref{JacPcutdef}
produces \eqref{addPtriggen}. This
completes the proof.
\end{proof}

\begin{rem}
It is worth mentioning that in the definitions of $Z^\pm$ \eqref{ZZdef} and ${\sf X}^\pm$ \eqref{XXdef}, the
influence of the $\pm 1$ factor on the addition theorems in
Theorem \ref{Koornaddnhyp} and elsewhere in this paper is simply due
to the influence of the parity relation for ultraspherical polynomials \eqref{parGeg} upon the
reflection map $x\mapsto -x$.
\end{rem}

\begin{rem}
Note that there are various ways of expressing the variables $Z^\pm$ \eqref{ZZdef} and ${\sf X}^\pm$ \eqref{XXdef},
which are useful in different applications. For instance, we may also write
\begin{eqnarray}
&&\hspace{-2.95cm}Z^\pm=
2z_1^2z_2^2(1-x^2)-1+2(z_1^2-1)(z_2^2-1)\left(w\pm\frac{xz_1z_2}{\sqrt{(z_1^2-1)(z_2^2-1)}}\right)^2\nonumber\\
&&\hspace{-2.3cm}=2(z_1^2-1)(z_2^2-1)\left(
\frac{2z_1^2z_2^2(1-x^2)-1}{2(z_1^2-1)(z_2^2-1)}+\left(w\pm \frac{xz_1z_2}{\sqrt{(z_1^2-1)(z_2^2-1)}}\right)^2\right),\nonumber
\end{eqnarray}
\begin{eqnarray}
&&\hspace{-2.85cm}{\sf X}^\pm=
2x_1^2x_2^2(1-x^2)-1+2(1-x_1^2)(1-x_2^2)\left(w\pm \frac{xx_1x_2}{\sqrt{(1-x_1^2)(1-x_2^2)}}\right)^2\nonumber\\
&&\hspace{-2.25cm}=2(1-x_1^2)(1-x_2^2)\left(
\frac{2x_1^2x_2^2(1-x^2)-1}{2(1-x_1^2)(1-x_2^2)}+\left(w\pm\frac{xx_1x_2}{\sqrt{(1-x_1^2)(1-x_2^2)}}\right)^2\right).\nonumber
\end{eqnarray}
\end{rem}

First we will develop some tools which will help us prove the correct form of the double summation addition theorem for the Jacobi function of the second kind. Consider the orthogonality of the ultraspherical polynomials and the Jacobi polynomials with the argument 
$2w^2-1$.
\begin{lem}
\label{orthogGegJac}
Let $m,n,p\in\N_0$, $\mu\in(-\frac12,\infty)$
$\alpha,\beta\in(-1,\infty)$, $\alpha>\beta$.
Then the ultraspherical and Jacobi polynomials satisfy the following orthogonality relations
\begin{eqnarray}
&&\hspace{-3.3cm}\int_0^\pi\! C_m^\mu(\cos\phi)C_n^\mu(\cos\phi)(\sin\phi)^{2\mu}\,\dd\phi\!=\!\frac{\pi\,\Gamma(2\mu+n)}{2^{2\mu-1}(\mu+n)n!\,\Gamma(\mu)^2}\delta_{m,n},
\label{orthogGeg}
\\
&&\hspace{-3.3cm}\int_0^1 \!P_m^{(\alpha-\beta-1,\beta+p)}(2w^2\!-\!1)
P_n^{(\alpha-\beta-1,\beta+p)}(2w^2\!-\!1)w^{2\beta+2p+1}(1\!-\!w^2)^{\alpha-\beta-1} \,\dd w \nonumber\\
&&\hspace{2.5cm}
=\frac{\Gamma(\alpha-\beta+n)\Gamma(\beta+1+p+n)}{2(\alpha+p+2n)\Gamma(\alpha+p+n)n!}\delta_{m,n}.
\label{orthogJac}
\end{eqnarray}
\end{lem}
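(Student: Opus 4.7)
The plan is to recognize that both identities are direct consequences of well-known orthogonality relations already in the literature. Identity \eqref{orthogGeg} is the classical orthogonality of the ultraspherical polynomials (e.g.~DLMF \S18.3), which is immediate from the Jacobi polynomial orthogonality via the relation $C_n^\mu(x)=\tfrac{(2\mu)_n}{(\mu+\frac12)_n}P_n^{(\mu-\frac12,\mu-\frac12)}(x)$ given in \eqref{GegJac}, together with the substitution $x=\cos\phi$ which turns $(1-x^2)^{\mu-\frac12}\,dx$ into $(\sin\phi)^{2\mu}\,d\phi$. The normalization constant then drops out after using the duplication formula for $\Gamma$ and simplifying the ratio $(2\mu)_n^2/(\mu+\frac12)_n^2$ against the Jacobi normalization.

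For \eqref{orthogJac} I would start from the standard Jacobi polynomial orthogonality
\begin{equation*}
\int_{-1}^1 P_m^{(a,b)}(x)P_n^{(a,b)}(x)(1-x)^{a}(1+x)^{b}\,dx=\frac{2^{a+b+1}\Gamma(n+a+1)\Gamma(n+b+1)}{(2n+a+b+1)\,n!\,\Gamma(n+a+b+1)}\delta_{m,n},
\end{equation*}
valid for $a,b>-1$, and specialize to $a=\alpha-\beta-1$, $b=\beta+p$, so that $a+b+1=\alpha+p$. The hypotheses $\alpha>\beta>-1$ and $p\in\N_0$ guarantee $a,b>-1$.

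The key step is the change of variable $x=2w^2-1$, so that $dx=4w\,dw$, $1-x=2(1-w^2)$, $1+x=2w^2$, and the interval $x\in(-1,1)$ corresponds to $w\in(0,1)$. Under this substitution
\begin{equation*}
(1-x)^{\alpha-\beta-1}(1+x)^{\beta+p}\,dx=2^{\alpha+p+1}w^{2\beta+2p+1}(1-w^2)^{\alpha-\beta-1}\,dw,
\end{equation*}
so the left-hand side of \eqref{orthogJac} equals $2^{-\alpha-p-1}$ times the standard Jacobi orthogonality integral with parameters $(a,b)=(\alpha-\beta-1,\beta+p)$. Substituting the normalization constant yields
\begin{equation*}
\frac{2^{a+b+1}}{2^{\alpha+p+1}(2n+a+b+1)}\frac{\Gamma(n+\alpha-\beta)\Gamma(n+\beta+p+1)}{n!\,\Gamma(n+\alpha+p)}\delta_{m,n}=\frac{\Gamma(n+\alpha-\beta)\Gamma(n+\beta+p+1)}{2(\alpha+p+2n)\,n!\,\Gamma(n+\alpha+p)}\delta_{m,n},
\end{equation*}
which is exactly the right-hand side of \eqref{orthogJac}. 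There is no real obstacle here; the only bookkeeping point is to verify that the powers of $2$ and the shift $a+b+1\mapsto\alpha+p$ cancel correctly.
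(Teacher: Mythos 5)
Your proof is correct and follows essentially the same route as the paper, which simply cites the standard Gegenbauer and Jacobi orthogonality relations from Koekoek--Lesky--Swarttouw and makes the substitutions $x=\cos\phi$ and $x=2w^2-1$; your explicit bookkeeping of the powers of $2$ and the parameter shift $a+b+1=\alpha+p$ checks out. The only (immaterial) variation is that you derive the ultraspherical orthogonality from the Jacobi one via \eqref{GegJac} and the duplication formula rather than quoting it directly.
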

\begin{proof}
These orthogonality relations follow easily from \cite[(9.8.20), (9.8.2)]{Koekoeketal} upon making the straightforward substitutions.
\end{proof}

\subsection{The parabolic biangle orthogonal polynomial system}
\noindent Define the 2-variable orthogonal polynomial system 
which are sometimes referred to as parabolic biangle polynomials \cite{KoornwinderSchwartz97}
\begin{equation}
\label{defParabipoly}
{\mathcal P}_{k,l}^{(\alpha,\beta)}(w,\phi):=
w^{k-l}P_l^{(\alpha-\beta-1,\beta+k-l)}(2w^2\!-\!1)C_{k-l}^\beta(\cos\phi),
\end{equation}
where $k,l\in\N_0$ such that $l\le k$.
These 2-variable polynomials are orthogonal over $(w,\phi)\in(0,1)\times(0,\pi)$ with
orthogonality measure $\dd m^{(\alpha,\beta)}(w,\phi)$ defined by 
\begin{equation}
\hspace{0.3cm}\dd m^{(\alpha,\beta)}(w,\phi):=(1-w^2)^{\alpha-\beta-1}w^{2\beta+1}(\sin\phi)^{2\beta}\,\dd w\,\dd \phi.
\label{measdef}
\end{equation}
The orthogonal polynomial system ${\mathcal P}_{k,l}^{(\alpha,\beta)}(w,\phi)$ is deeply connected to the addition theorem for Jacobi functions of the first and second kind. Using 
the orthogonality relations in Lemma \ref{orthogGegJac} we can derive the orthogonality relation for the 2-variable parabolic biangle polynomials.
\begin{lem}Let $k,l,k',l'\in\N_0$ such that $l\le k$, $l'\le k'$, $\alpha,\beta\in(-1,\infty)$, $\alpha>\beta$. Then the 2-variable parabolic biangle polynomials
satisfy the following orthogonality relation
\begin{eqnarray}
&&\hspace{-0.8cm}\int_0^1\int_0^\pi {\mathcal P}_{k,l}^{(\alpha,\beta)}(w,\phi)
 {\mathcal P}_{k',l'}^{(\alpha,\beta)}(w,\phi)\, \dd m^{(\alpha,\beta)}(w,\phi)=\frac{\pi\,\Gamma(\beta\!+\!1\!+\!k)\Gamma(2\beta\!+\!k\!-\!l)\Gamma(\alpha\!-\!\beta\!+\!l)}{2^{2\beta}\Gamma(\beta)^2(\alpha\!+\!k\!+\!l)(\beta\!+\!k\!-\!l)\Gamma(\alpha\!+\!k)(k\!-\!l)!l!}\delta_{k,k'}\delta_{l,l'}.
\end{eqnarray}
\end{lem}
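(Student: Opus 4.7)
The plan is to reduce the double integral to a product of two one-dimensional integrals, to which Lemma \ref{orthogGegJac} can be applied directly. Substituting the definition \eqref{defParabipoly} and the measure \eqref{measdef}, and invoking Fubini, the integrand factorizes cleanly as a function of $w$ times a function of $\phi$. The $\phi$-integral is
\[
\int_0^\pi C_{k-l}^\beta(\cos\phi)\,C_{k'-l'}^\beta(\cos\phi)(\sin\phi)^{2\beta}\,\dd\phi,
\]
which by \eqref{orthogGeg} with $\mu=\beta$ vanishes unless $k-l=k'-l'$ and in that case evaluates to
\[
\frac{\pi\,\Gamma(2\beta+k-l)}{2^{2\beta-1}(\beta+k-l)(k-l)!\,\Gamma(\beta)^2}.
\]

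First I would use this $\delta_{k-l,k'-l'}$ to set $p:=k-l=k'-l'$ in the remaining $w$-integral. This is the essential move: once $k-l$ and $k'-l'$ coincide, the two Jacobi polynomials in the $w$-integrand share the \emph{same} parameter pair $(\alpha-\beta-1,\beta+p)$ so that Lemma \ref{orthogGegJac} is applicable. The $w$-integral becomes
\[
\int_0^1 P_l^{(\alpha-\beta-1,\beta+p)}(2w^2-1)\,P_{l'}^{(\alpha-\beta-1,\beta+p)}(2w^2-1)\,w^{2\beta+2p+1}(1-w^2)^{\alpha-\beta-1}\,\dd w,
\]
which by \eqref{orthogJac} yields $\delta_{l,l'}$ times
\[
\frac{\Gamma(\alpha-\beta+l)\,\Gamma(\beta+1+p+l)}{2(\alpha+p+2l)\,\Gamma(\alpha+p+l)\,l!}=\frac{\Gamma(\alpha-\beta+l)\,\Gamma(\beta+1+k)}{2(\alpha+k+l)\,\Gamma(\alpha+k)\,l!}.
\]
Since $k-l=k'-l'$ together with $l=l'$ forces $k=k'$, the product of Kronecker deltas collapses to $\delta_{k,k'}\delta_{l,l'}$.

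Finally I would multiply the two normalization constants, combining $2^{2\beta-1}\cdot 2 = 2^{2\beta}$ in the denominator and grouping the gamma factors, to recover exactly
\[
\frac{\pi\,\Gamma(\beta+1+k)\,\Gamma(2\beta+k-l)\,\Gamma(\alpha-\beta+l)}{2^{2\beta}\,\Gamma(\beta)^2\,(\alpha+k+l)(\beta+k-l)\,\Gamma(\alpha+k)\,(k-l)!\,l!},
\]
which is the asserted right-hand side. There is no genuine obstacle: the proof is a direct tensor-product orthogonality argument. The only mild subtlety is recognizing that the Gegenbauer orthogonality must be applied \emph{first} in order to legitimately pass to the Jacobi orthogonality — otherwise the two Jacobi polynomials appearing in the $w$-integrand would carry unequal upper parameters and the standard orthogonality formula would not apply.
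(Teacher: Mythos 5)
Your proposal is correct and follows essentially the same route as the paper: substitute the definition \eqref{defParabipoly} and the measure \eqref{measdef}, factor the double integral, and apply the two orthogonality relations of Lemma \ref{orthogGegJac}, with the constants combining exactly as you show. Your added remark---that the Gegenbauer orthogonality in $\phi$ must be invoked first so that the resulting $\delta_{k-l,k'-l'}$ makes the two Jacobi polynomials in the $w$-integral share the parameter pair $(\alpha-\beta-1,\beta+k-l)$ before \eqref{orthogJac} is applied---is a useful clarification of a step the paper's terse proof leaves implicit.
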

\begin{proof}
Starting with the definition of the 2-variable 
parabolic biangle polynomials \eqref{defParabipoly} and integrating over 
$(w,\phi)\in(0,1)\times(0,\pi)$ with 
measure \eqref{measdef} and using
the orthogonality relations in Lemma \ref{orthogGegJac} completes the proof.
\end{proof}

\noindent The following result is a Jacobi function of the first kind generalization
of \cite[(4.10)]{Koornwinder75} for Jacobi polynomials.

\begin{thm}
\label{IntPdblmsr}
Let $k,l\in\N_0$ with $l\le k$, $\gamma,\alpha,\beta\in\mathbb C$, $z_1,z_2\in\mathbb C\setminus(-\infty,1]$,
$Z^\pm$
defined in \eqref{ZZdef},
such that $x=\cos\phi$ and the complex variables $\gamma,\alpha,\beta,z_1,z_2$ are in some
yet to be determined neighborhood of the real line.
Then
\begin{eqnarray}
&&\hspace{-0.7cm}
\int_0^1 \!\int_0^\pi P_\gamma^{(\alpha,\beta)}(Z^\pm)\, w^{k-l}P_l^{(\alpha-\beta-1,\beta+k-l)}(2w^2\!-\!1)C_{k-l}^\beta(\cos\phi)
 \,\dd m^{(\alpha,\beta)}(w,\phi)\nonumber\\[0.15cm]
&&\hspace{0.0cm}={(\mp 1)^{k+l}}\,{\sf A}^{(\alpha,\beta,\gamma)}_{k,l}
(z_1z_2)^{k-l} ((z_1^2-\!1)(z_2^2\!-\!1))^{\frac12(k+l)} P_{\gamma-k}^{(\alpha+k+l,\beta+k-l)}(2z_1^2\!-\!1)
P_{\gamma-k}^{(\alpha+k+l,\beta+k-l)}(2z_2^2\!-\!1),
\end{eqnarray}
where
\begin{equation}
\label{Akl}
\hspace{0.2cm}{\sf A}^{(\alpha,\beta,\gamma)}_{k,l}:=\frac{\pi\Gamma(\gamma+1)(\alpha+\beta+\gamma+1)_k\Gamma(2\beta+k-l)\Gamma(\alpha-\beta+l)(-\beta-\gamma)_l}{2^{2\beta}\Gamma(\beta)(-\gamma)_k\,(k-l)!\,l!\,\Gamma(\alpha+\gamma+1+l)}.
\end{equation}
\end{thm}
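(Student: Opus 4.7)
My plan is to prove Theorem \ref{IntPdblmsr} by substituting the addition theorem \eqref{addPhypgen} into the integrand and exploiting the orthogonality of the parabolic biangle system ${\mathcal P}_{k,l}^{(\alpha,\beta)}(w,\phi)$ established in the preceding lemma. Since the kernel of the integral is exactly ${\mathcal P}_{k,l}^{(\alpha,\beta)}(w,\phi)$ (up to the identification $x=\cos\phi$), and since the addition theorem expresses $P_\gamma^{(\alpha,\beta)}(Z^\pm)$ as a double series in the parabolic biangle polynomials ${\mathcal P}_{k',l'}^{(\alpha,\beta)}(w,\phi)$ in the $(w,\phi)$ variables, the orthogonality will collapse the double sum to the single term with $(k',l')=(k,l)$.

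Concretely, I would first substitute \eqref{addPhypgen} for $P_\gamma^{(\alpha,\beta)}(Z^\pm)$ and interchange summation with integration — this is legitimate in a suitable neighborhood of the real line where the addition theorem converges uniformly (one proves the integrand is dominated, or first restricts to real parameters $\alpha>\beta>-\tfrac12$ in the orthogonality regime and analytically continues in the parameters at the end). After the interchange, the $(w,\phi)$ integral becomes
\[
\int_0^1\!\!\int_0^\pi {\mathcal P}_{k',l'}^{(\alpha,\beta)}(w,\phi)\,{\mathcal P}_{k,l}^{(\alpha,\beta)}(w,\phi)\,\dd m^{(\alpha,\beta)}(w,\phi),
\]
which by the orthogonality lemma equals
\[
\frac{\pi\,\Gamma(\beta+1+k)\Gamma(2\beta+k-l)\Gamma(\alpha-\beta+l)}{2^{2\beta}\Gamma(\beta)^2(\alpha+k+l)(\beta+k-l)\Gamma(\alpha+k)(k-l)!\,l!}\,\delta_{k,k'}\delta_{l,l'}.
\]

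Next I would combine this norm with the coefficient of ${\mathcal P}_{k,l}^{(\alpha,\beta)}$ in the addition theorem, namely
\[
\frac{\Gamma(\alpha+1)\Gamma(\gamma+1)}{\Gamma(\alpha+\gamma+1)}\cdot\frac{(\alpha+1)_k(\alpha+\beta+\gamma+1)_k}{(\alpha+k)(\beta+1)_k(-\gamma)_k}\cdot(\mp 1)^{k-l}\frac{(\alpha+k+l)(-\beta-\gamma)_l}{(\alpha+\gamma+1)_l}\cdot\frac{\beta+k-l}{\beta},
\]
and simplify. The factors $(\alpha+k+l)$ and $(\beta+k-l)$ cancel immediately; the identities $\Gamma(\alpha+1)(\alpha+1)_k/\Gamma(\alpha+k)=\alpha+k$ and $\Gamma(\beta+1+k)/(\Gamma(\beta+1)(\beta+1)_k)=1$ reduce $1/\Gamma(\beta)^2\cdot\Gamma(\beta+1+k)/((\beta+1)_k\beta)$ to $1/\Gamma(\beta)$, and $(\alpha+\gamma+1)_l/\Gamma(\alpha+\gamma+1)$ combines into $1/\Gamma(\alpha+\gamma+1+l)$. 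Noting that $(\mp 1)^{k-l}=(\mp 1)^{k+l}$ since $(-1)^{2l}=1$, the resulting constant is precisely ${\sf A}^{(\alpha,\beta,\gamma)}_{k,l}$ as defined in \eqref{Akl}, while the remaining $(z_1,z_2)$-dependent factor of the addition theorem gives the product of Jacobi functions on the right-hand side.

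The main obstacle is not the algebraic collapse, which is automatic once the orthogonality lemma is in hand, but justifying the interchange of summation and integration along with the analytic continuation from the real parameter range $\alpha>\beta>-\tfrac12$ (where the $L^2$ orthogonality is valid) to the asserted neighborhood of complex parameters. I would handle this by fixing real $\alpha,\beta$ satisfying the orthogonality hypotheses, proving the identity there via the dominated convergence bound coming from the uniform convergence of the Flensted-Jensen--Koornwinder expansion, and then extending both sides as meromorphic functions in $(\gamma,\alpha,\beta)$ to the neighborhood in question, the two sides having matching domains of holomorphy away from the explicit poles of the gamma and Pochhammer factors.
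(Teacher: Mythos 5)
Your proposal is correct and follows essentially the same route as the paper: substitute the addition theorem \eqref{addPhypgen} (with relabeled indices) into the integral, apply the orthogonality of the parabolic biangle system (equivalently, \eqref{orthogGeg} and \eqref{orthogJac}) to collapse the double sum to the $(k,l)$ term, and simplify the resulting constant to ${\sf A}_{k,l}^{(\alpha,\beta,\gamma)}$. Your additional remarks on justifying the interchange of sum and integral and the analytic continuation in the parameters only make explicit what the paper leaves implicit.
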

\begin{proof}
Start with the addition theorem 
for the Jacobi function of the first kind
\eqref{addPhypgen} and consider the $(k,l)$-th term in the double series. It involves
a product of two Jacobi functions of the first
kind with degree $\gamma-k$ and parameters $(\alpha+k+l,\beta+k-l)$. 
Replace in \eqref{addPhypgen} the summation indices $k,l$ by $k',l'$, multiply both sides of \eqref{addPhypgen} by
${\mathcal P}_{k,l}^{(\alpha,\beta)}(w,\phi)\,\dd m^{(\alpha,\beta)}(w,\phi)$,
and integrate both sides over $(w,\phi)\in(0,1)\times(0,\pi)$
using \eqref{orthogGeg},
\eqref{orthogJac}. This completes the proof.
\end{proof}

We will return to the parabolic biangle polynomials in Section \ref{secfunsecond}.

\subsection{Special cases of the addition theorem for the Jacobi function of the first kind}

In the case when $z_1,z_2,x_1,x_2,w,x=\cos\phi$ are real numbers then the argument of the Jacobi
function of the first kind in the addition theorem takes a simpler form convenient form and was proved in Flensted-Jensen--Koornwinder \cite{FlenstedJensenKoorn79}. 

\begin{rem}
In the case where the variables $z_1,z_2,x_1,x_2,x,w$ are real then you may
write $Z^\pm$ and ${\sf X}^\pm$ as follows
\begin{eqnarray}
&&\hspace{-8.6cm}{Z}^\pm=2\left|z_1z_2\pm\expe^{i\phi}w\sqrt{z_1^2-1}\sqrt{z_2^2-1}\right|^2-1,
\label{ZZrealdef}
\\[0.10cm]
&&\hspace{-8.55cm}{\sf X}^\pm=2\left|x_1x_2\pm\expe^{i\phi}w\sqrt{1-x_1^2}\sqrt{1-x_2^2}\right|^2-1.
\label{XXrealdef}
\end{eqnarray}
\end{rem}

\noindent We now give a result which appears to be identical to Theorem \ref{Koornaddnhyp}, but it must be emphasized that it is only in the real case that we are able to write
$Z^\pm$, ${\sf X}^\pm$ using 
\eqref{ZZrealdef}, \eqref{XXrealdef}.
Otherwise one must use \eqref{ZZdef}, \eqref{XXdef}.

\begin{thm}
\label{Koornaddnhypd1}
Let $\gamma,\alpha,\beta\in\mathbb C$, $z_1,z_2\in(1,\infty)$,
$x_1,x_2\in(-1,1)$,
$w\in\mathbb R$, $\phi\in[0,\pi]$,
and $Z^\pm$, ${\sf X}^\pm$ is defined as 
in \eqref{ZZrealdef}, \eqref{XXrealdef} respectively.
Then
\begin{eqnarray}
&&\hspace{-0.60cm}P_\gamma^{(\alpha,\beta)}(Z^\pm)
=\frac{\Gamma(\alpha+1)\Gamma(\gamma+1)}{\Gamma(\alpha+\gamma+1)}\sum_{k=0}^\infty
\frac{(\alpha+1)_k(\alpha+\beta+\gamma+1)_k}
{(\alpha+k)(\beta+1)_k(-\gamma)_k}
\nonumber\\
&&\hspace{-0.2cm}\times\sum_{l=0}^k
(\mp 1)^{k-l}
\frac{(\alpha+k+l)(-\beta-\gamma)_l}{(\alpha+\gamma+1)_l}
(z_1z_2)^{k-l}\left(({z_1^2-1}) ({z_2^2-1})\right)^{\frac{k+l}{2}}\nonumber\\[0.1cm]
&&\hspace{0.3cm}\times
P_{\gamma-k}^{(\alpha+k+l,\beta+k-l)}(2z_1^2\!-\!1)
P_{\gamma-k}^{(\alpha+k+l,\beta+k-l)}(2z_2^2\!-\!1)
w^{k-l}P_{l}^{(\alpha-\beta-1,\beta+k-l)}(2w^2\!-\!1)
\frac{\beta\!+\!k\!-\!l}{\beta}C_{k-l}^\beta(\cos\phi),
\label{addPhyp}
\end{eqnarray}
\begin{eqnarray}
&&\hspace{-0.45cm}{\sf P}_\gamma^{(\alpha,\beta)}({\sf X}^\pm)
=\frac{\Gamma(\alpha+1)\Gamma(\gamma+1)}{\Gamma(\alpha+\gamma+1)}\sum_{k=0}^\infty
\frac{(\alpha+1)_k(\alpha+\beta+\gamma+1)_k}
{(\alpha+k)(\beta+1)_k(-\gamma)_k}
\nonumber\\
&&\hspace{-0.2cm}\times\sum_{l=0}^k
(\mp 1)^{k-l}
\frac{(\alpha+k+l)(-\beta-\gamma)_l}{(\alpha+\gamma+1)_l}
(x_1x_2)^{k-l}\left((1-x_1^2)(1-x_2^2)\right)^{\frac{k+l}{2}}
 \nonumber\\[0.1cm]
&&\hspace{0.3cm}\times
{\sf P}_{\gamma-k}^{(\alpha+k+l,\beta+k-l)}(2x_1^2\!-\!1)
{\sf P}_{\gamma-k}^{(\alpha+k+l,\beta+k-l)}(2x_2^2\!-\!1)
w^{k-l}{P}_{l}^{(\alpha-\beta-1,\beta+k-l)}(2w^2\!-\!1)\frac{\beta\!+\!k\!-\!l}{\beta}C_{k-l}^\beta(\cos\phi).
\label{addPtrig}
\end{eqnarray}
\end{thm}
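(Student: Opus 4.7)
The plan is to obtain Theorem \ref{Koornaddnhypd1} as a specialization of the already-established Theorem \ref{Koornaddnhyp}. The content of the two theorems is really the same; the only genuine work is to check that in the real case one may rewrite the arguments $Z^\pm$ \eqref{ZZdef} and ${\sf X}^\pm$ \eqref{XXdef} in the compact ``modulus squared'' forms \eqref{ZZrealdef}, \eqref{XXrealdef}, after which \eqref{addPhyp}, \eqref{addPtrig} are verbatim copies of \eqref{addPhypgen}, \eqref{addPtriggen} with $x=\cos\phi$.

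First I would set $x=\cos\phi$ throughout Theorem \ref{Koornaddnhyp}; this immediately converts $C_{k-l}^\beta(x)$ in the sum into $C_{k-l}^\beta(\cos\phi)$, which is exactly the factor that appears in \eqref{addPhyp} and \eqref{addPtrig}. All of the other factors on the right-hand sides are already identical, so no further manipulation of the series itself is required.

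Next I would verify the algebraic identity
\begin{equation*}
\bigl|z_1 z_2 \pm e^{i\phi} w \sqrt{z_1^2-1}\sqrt{z_2^2-1}\bigr|^2
= z_1^2 z_2^2 \pm 2 z_1 z_2 w \cos\phi \sqrt{z_1^2-1}\sqrt{z_2^2-1} + w^2 (z_1^2-1)(z_2^2-1),
\end{equation*}
valid whenever $z_1,z_2>1$ and $w,\phi$ are real, so that $z_1 z_2$, $w\sqrt{z_1^2-1}\sqrt{z_2^2-1}$ are both real. This is obtained by multiplying the expression by its complex conjugate and using $e^{i\phi}+e^{-i\phi}=2\cos\phi$. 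Doubling this quantity and subtracting $1$ reproduces the expression for $Z^\pm$ in \eqref{ZZdef} with $x=\cos\phi$; an identical computation with $x_1,x_2\in(-1,1)$ (so that $\sqrt{1-x_1^2},\sqrt{1-x_2^2}$ are real) yields the analogous identity for ${\sf X}^\pm$ in \eqref{XXdef}. Consequently the compact forms \eqref{ZZrealdef}, \eqref{XXrealdef} agree with \eqref{ZZdef}, \eqref{XXdef} in the real regime assumed here.

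There is no real obstacle to overcome; the proof is essentially a reformulation of Theorem \ref{Koornaddnhyp} that highlights its content in the original Flensted-Jensen--Koornwinder setting. The only point worth emphasizing, which I would state as a brief remark at the end (and which the authors already flag in the paragraph preceding the theorem), is that the modulus-squared rewriting fails outside the real regime: the identity $|u+v|^2=(u+v)\overline{(u+v)}$ requires $u,v$ to be genuine complex numbers with conjugates in the standard sense, which breaks down once $z_1,z_2,x_1,x_2,w$ are taken complex, and in that generality one must retain the polynomial expressions \eqref{ZZdef}, \eqref{XXdef}.
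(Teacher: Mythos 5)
Your proposal is correct and follows the same route as the paper: the paper's proof also simply restricts Theorem \ref{Koornaddnhyp} to real $z_1,z_2,x_1,x_2,w$ and $x=\cos\phi$, the only substance being that the polynomial forms \eqref{ZZdef}, \eqref{XXdef} then coincide with the modulus-squared forms \eqref{ZZrealdef}, \eqref{XXrealdef}. Your explicit expansion of $|u\pm e^{i\phi}v|^2$ just makes that coincidence (which the paper leaves implicit) fully verified.
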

\begin{proof}
Starting with Theorem \ref{Koornaddnhyp}
and restricting such that the variables
$z_1,z_2,x_1,x_2,w,x=\cos\phi$ are real
completes the proof.
\end{proof}

Next we have a specialization of Theorem \ref{Koornaddnhypd1}
when $w=1$.

\begin{cor}
\label{limKoorn}
Let $\gamma,\alpha,\beta\in\mathbb C$, $r_1,r_2\in[0,\infty)$, $\theta_1,\theta_2\in[0,\frac{\pi}{2}]$, 
$\phi\in[0,\pi]$, 
\begin{eqnarray}
&&\hspace{-0.25cm}Z^\pm:=2\left|\cosh r_1\cosh r_2\pm\expe^{i\phi}
\sinh r_1\sinh r_2\right|^2-1
=\cosh(2 r_1)\cosh(2 r_2)\pm\sinh(2 r_1)\sinh(2 r_2)\cos\phi,\label{spZZdef}\\
&&\hspace{-0.25cm}{\sf X}^\pm:=2\left|\cos\theta_1\cos\theta_2\pm\expe^{i\phi}
\sin\theta_1\sin\theta_2\right|^2-1
=\cos(2\theta_1)\cos(2\theta_2)\pm\sin(2\theta_1)\sin(2\theta_2)\cos\phi.
\label{spXXdef}
\end{eqnarray}
Then
\begin{eqnarray}
&&\hspace{-0.85cm}P_\gamma^{(\alpha,\beta)}(Z^\pm)
=\frac{\Gamma(\alpha+1)\Gamma(\gamma+1)}{\Gamma(\alpha+\gamma+1)}\sum_{k=0}^\infty
\frac{(\alpha)_k(\frac{\alpha}{2}+1)_k(-\beta-\gamma)_k(\alpha+\beta+\gamma+1)_k}
{(\frac{\alpha}{2})_k(\beta+1)_k(-\gamma)_k(\alpha+\gamma+1)_k}
(\sinh r_1\sinh r_2)^{2k}\nonumber\\
&&\hspace{0.5cm}\times\sum_{l=0}^k
\frac{(\mp 1)^l(\alpha-\beta)_{k-l}(-\alpha-\gamma-k)_l(-\alpha-2k+1)_l}
{(k-l)!(-\alpha-2k)_l(\beta+\gamma+1)_l}
(\coth r_1\coth r_2
)^l\,
\nonumber\\[0.1cm]
&&\hspace{1.5cm}\times
P_{\gamma-k}^{(\alpha+2k-l,\beta+l)}(\cosh(2r_1))
P_{\gamma-k}^{(\alpha+2k-l,\beta+l)}(\cosh(2r_2))\frac{\beta+l}{\beta}C_l^\beta(\cos\phi) 
.
\end{eqnarray}
\begin{eqnarray}
&&\hspace{-0.85cm}{\sf P}_\gamma^{(\alpha,\beta)}({\sf X}^\pm)
=\frac{\Gamma(\alpha+1)\Gamma(\gamma+1)}{\Gamma(\alpha+\gamma+1)}\sum_{k=0}^\infty
\frac{(\alpha)_k(\frac{\alpha}{2}+1)_k(-\beta-\gamma)_k(\alpha+\beta+\gamma+1)_k}
{(\frac{\alpha}{2})_k(\beta+1)_k(-\gamma)_k(\alpha+\gamma+1)_k}
(\sin\theta_1\sin\theta_2)^{2k}\nonumber\\
&&\hspace{0.5cm}\times\sum_{l=0}^k
\frac{(\mp 1)^l(\alpha-\beta)_{k-l}(-\alpha-\gamma-k)_l(-\alpha-2k+1)_l}
{(k-l)!(-\alpha-2k)_l(\beta+\gamma+1)_l}
(\cot\theta_1\cot\theta_2
)^l\,
\nonumber\\[0.1cm]
&&\hspace{1.5cm}\times
P_{\gamma-k}^{(\alpha+2k-l,\beta+l)}(\cos(2\theta_1))
P_{\gamma-k}^{(\alpha+2k-l,\beta+l)}(\cos(2\theta_2))\frac{\beta+l}{\beta}C_l^\beta(\cos\phi) 
.
\end{eqnarray}
\end{cor}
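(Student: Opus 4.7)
The plan is to specialize Theorem \ref{Koornaddnhypd1} to $w=1$, collapse the Jacobi-polynomial factor with $2w^2-1=1$ by \eqref{Jacone}, reindex the inner sum, and redistribute $k$-dependent shifted factorials between the inner and outer sums.

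First I would set $w=1$ in \eqref{addPhyp}. Two simplifications happen immediately: the factor $w^{k-l}$ becomes $1$, and the terminating Jacobi polynomial evaluated at unity collapses by \eqref{Jacone} to
\[
P_l^{(\alpha-\beta-1,\beta+k-l)}(1)=\frac{(\alpha-\beta)_l}{l!}.
\]
Next I would introduce the hyperbolic parametrization $z_j=\cosh r_j$, so that $z_j^2-1=\sinh^2 r_j$ and $2z_j^2-1=\cosh(2r_j)$, turning $(z_1z_2)^{k-l}((z_1^2-1)(z_2^2-1))^{(k+l)/2}$ into $(\cosh r_1\cosh r_2)^{k-l}(\sinh r_1\sinh r_2)^{k+l}$. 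The form $Z^\pm$ in \eqref{spZZdef} agrees with the $w=1$ specialization of $Z^\pm$ in \eqref{ZZrealdef} after an elementary use of the double angle identities, confirming the argument on the left-hand side.

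The next step is to match the indexing used in the Corollary by substituting $l\mapsto k-l$ in the inner summation. Under this change, $(\alpha+k+l)\mapsto(\alpha+2k-l)$, the Jacobi function parameters become $(\alpha+2k-l,\beta+l)$, and the sign $(\mp 1)^{k-l}$ becomes $(\mp 1)^l$, matching the target. The hyperbolic prefactor rewrites as
\[
(\cosh r_1\cosh r_2)^{l}(\sinh r_1\sinh r_2)^{2k-l}=(\sinh r_1\sinh r_2)^{2k}(\coth r_1\coth r_2)^l,
\]
which allows me to pull the $l$-independent factor $(\sinh r_1\sinh r_2)^{2k}$ out of the inner sum.

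The main technical step is the Pochhammer bookkeeping that converts the prefactor $\frac{(\alpha+1)_k(\alpha+\beta+\gamma+1)_k}{(\alpha+k)(\beta+1)_k(-\gamma)_k}$ into the symmetric form displayed in the Corollary. I would use the identities
\[
(a)_{k-l}=\frac{(a)_k}{(a+k-l)_l},\qquad (a)_l=(-1)^l(-a-l+1)_l,\qquad \frac{1}{(k-l)!}=\frac{(-1)^l(-k)_l}{k!},
\]
together with $\frac{(\alpha)_k(\frac{\alpha}{2}+1)_k}{(\frac{\alpha}{2})_k}=(\alpha+2k)\frac{(\alpha)_k}{\alpha}$ and $(\alpha)_k(\alpha+k)=\alpha(\alpha+1)_k$, to absorb the $l$-independent Pochhammer ratios $(\alpha+2k-l)/(\alpha+2k)$, $(-\beta-\gamma)_{k-l}/(-\beta-\gamma)_k$, and $(\alpha+\gamma+1)_k/(\alpha+\gamma+1)_{k-l}$ into the form $(-\alpha-2k+1)_l/(-\alpha-2k)_l$ and $(-\alpha-\gamma-k)_l$, which appear in the Corollary. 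The $k$-independent ``extra'' factors $(-\beta-\gamma)_k$ and $(\alpha+2k)/(\alpha+\gamma+1)_k$ then get relocated into the outer $k$-sum, producing precisely the prefactor $\frac{(\alpha)_k(\frac{\alpha}{2}+1)_k(-\beta-\gamma)_k(\alpha+\beta+\gamma+1)_k}{(\frac{\alpha}{2})_k(\beta+1)_k(-\gamma)_k(\alpha+\gamma+1)_k}$.

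Finally, the trigonometric identity is obtained by starting from \eqref{addPtrig} instead of \eqref{addPhyp} and running the identical argument with the substitution $x_j=\cos\theta_j$, $\sqrt{1-x_j^2}=\sin\theta_j$, $2x_j^2-1=\cos(2\theta_j)$; the Pochhammer manipulations are untouched, since they involve only $\gamma,\alpha,\beta,k,l$. The main obstacle is simply careful bookkeeping in the Pochhammer step: keeping track of the $(-1)^l$ powers produced by the four reflection identities and verifying that every $k$-dependent residual after extracting $l$-independent factors does indeed match $(\alpha+2k)(-\beta-\gamma)_k/(\alpha+\gamma+1)_k$ on the nose.
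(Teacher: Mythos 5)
Your proposal takes exactly the paper's route: the paper's own proof is the one-liner ``start with Theorem \ref{Koornaddnhypd1}, set $w=1$ using \eqref{Jacone}, and substitute $l\mapsto k-l$,'' and your steps (collapsing $P_l^{(\alpha-\beta-1,\beta+k-l)}(1)$ to $(\alpha-\beta)_l/l!$, the parametrizations $z_j=\cosh r_j$, $x_j=\cos\theta_j$, the reindexing, pulling $(\sinh r_1\sinh r_2)^{2k}$ out of the inner sum, and redistributing the $l$-independent Pochhammer pieces into the outer sum) are just the explicit version of that argument; the individual identities you quote are all correct (modulo the slip of calling the $l$-dependent ratios ``$l$-independent'' and the $k$-dependent relocated factors ``$k$-independent'').

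The one substantive caveat concerns your final claim that the residuals match the displayed coefficients ``on the nose.'' If you actually carry out the conversion of $(-\beta-\gamma)_{k-l}$ via $(a)_{k-l}=(-1)^l(a)_k/(1-a-k)_l$, the inner denominator comes out as $(\beta+\gamma+1-k)_l$, not $(\beta+\gamma+1)_l$ as printed in the Corollary, while the factors $(-\alpha-\gamma-k)_l$, $(-\alpha-2k+1)_l/(-\alpha-2k)_l$ and the relocated outer factors $(-\beta-\gamma)_k/(\alpha+\gamma+1)_k$ and $(\alpha+2k)$ do come out exactly as you describe. A check at $k=l=1$ makes the mismatch concrete: the reindexed $(k,l)=(1,0)$ term of \eqref{addPhyp} at $w=1$ has coefficient $\mp(\alpha+1)(\alpha+\beta+\gamma+1)/\bigl((\beta+1)(-\gamma)\bigr)$ times $\cosh r_1\cosh r_2\,\sinh r_1\sinh r_2$, whereas the printed $(k,l)=(1,1)$ term of the Corollary gives this multiplied by $(\beta+\gamma)/(\beta+\gamma+1)$; writing $(\beta+\gamma+1-k)_l$ in place of $(\beta+\gamma+1)_l$ restores exact agreement (for $l=0$ the two coincide, which is why the leading terms look fine). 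So the derivation you outline cannot land on the statement exactly as displayed; the bookkeeping points to the denominator factor needing the ``$-k$'' shift, and your proof should either record that corrected form or flag the discrepancy rather than asserting the match.
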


\begin{proof}
Start with Theorem \ref{Koornaddnhypd1} and let $w=1$ using 
\eqref{Jacone}
and substituting $l\mapsto l'=k-l$ followed by relabeling $l'\mapsto l$ completes the proof.
\end{proof}

By letting $\alpha=\beta$ in Corollary \ref{limKoorn} we 
can relate the above result to associated Legendre and Gegenbauer functions of the first kind.
This is mentioned in \cite{Koornwinder1972AI}, 
namely that Koornwinder's addition theorem for 
Jacobi polynomials generalizes
Gegenbauer's addition theorem 
\eqref{Gegaddn}. Similarly, the extension to the 
Flensted-Jensen--Koornwinder addition theorem for Jacobi functions of the first kind generalizes the addition theorem for Gegenbauer functions of the first kind. First we define
the variables
\begin{eqnarray}
&&\label{ZGegdefc}\hspace{-5.8cm}{\mathcal Z}^\pm:={\mathcal Z}^\pm(r_1,r_2,\phi):=\cosh r_1\cosh r_2\pm \sinh r_1\sinh r_2\cos\phi,\\
&&\label{XGegdefc}\hspace{-5.8cm}{\mathcal X}^\pm:={\mathcal X}^\pm(\theta_1,\theta_2,\phi):=\cos\theta_1\cos\theta_2\pm \sin\theta_1\sin\theta_2\cos\phi.
\end{eqnarray}

\begin{cor}
Let $\gamma,\alpha\in\mathbb C$,
$r_1,r_2\in[0,\infty)$, 
$\theta_1,\theta_2\in[0,\frac{\pi}{2}]$, 
$\phi\in[0,\pi]$, 
and ${\mathcal Z}^\pm$, ${\mathcal X}^\pm$ as defined in \eqref{ZGegdefc}, \eqref{XGegdefc}, respectively.
Then
\begin{eqnarray}
\label{GegaddnZ}
&&\hspace{-1.5cm}C_\gamma^\alpha({\mathcal Z}^\pm)=\frac{\Gamma(2\alpha)\Gamma(\gamma+1)}{\Gamma(2\alpha+\gamma)}\sum_{k=0}^\infty \frac{(\mp 1)^k\,2^{2k}(\alpha)_k(\alpha)_k}
{(-\gamma)_k(2\alpha+\gamma)_k}(\sinh(2r_1)\sinh(2r_2))^k\nonumber\\
&&\hspace{3.75cm}\times C_{\gamma-k}^{\alpha+k}(\cosh(2r_1))C_{\gamma-k}^{\alpha+k}(\cosh(2r_2))\frac{\alpha\!-\!\tfrac12\!+\!k}{\alpha\!-\!\tfrac12}C_k^{\alpha-\frac12}(\cos\phi),
\end{eqnarray}
\begin{eqnarray}
\label{GegaddnX}
&&\hspace{-1.5cm}C_\gamma^\alpha({\mathcal X}^\pm)=\frac{\Gamma(2\alpha)\Gamma(\gamma+1)}{\Gamma(2\alpha+\gamma)}\sum_{k=0}^\infty \frac{(\mp 1)^k\,2^{2k}(\alpha)_k(\alpha)_k}
{(-\gamma)_k(2\alpha+\gamma)_k}(\sin(2\theta_1)\sin(2\theta_2))^k\nonumber\\
&&\hspace{3.75cm}\times C_{\gamma-k}^{\alpha+k}(\cos(2\theta_1))C_{\gamma-k}^{\alpha+k}(\cos(2\theta_2))\frac{\alpha\!-\!\tfrac12\!+\!k}{\alpha\!-\!\tfrac12}C_k^{\alpha-\frac12}(\cos\phi),
\end{eqnarray}
or equivalently
\begin{eqnarray}
&&\hspace{-0.8cm}\frac{1}{(1-{{\mathcal Z}^\pm}^2)^{\frac12\alpha}} P_\gamma^{-\alpha}({{\mathcal Z}}^\pm)=\frac{2^\alpha\Gamma(\alpha+1)}{(\sinh(2\theta_1)\sinh(2\theta_1))^\alpha}\nonumber\\
&&\hspace{1cm}\times\sum_{k=0}^\infty(\pm 1)^k(\alpha-\gamma)_k(\alpha+\gamma+1)_k{P}_\gamma^{-\alpha-k}(\cosh(2r_1)){P}_\gamma^{-\alpha-k}(\cosh(2r_2))\frac{\alpha+k}{\alpha}C_k^\alpha(\cos\phi),
\end{eqnarray}
\begin{eqnarray}
&&\hspace{-1.1cm}\frac{1}{(1-{{\mathcal X}^\pm}^2)^{\frac12\alpha}}{\sf P}_\gamma^{-\alpha}({\mathcal X}^\pm)=\frac{2^\alpha\Gamma(\alpha+1)}{(\sin(2\theta_1)\sin(2\theta_1))^\alpha}\nonumber\\
&&\hspace{1cm}\times\sum_{k=0}^\infty(\pm 1)^k(\alpha-\gamma)_k(\alpha+\gamma+1)_k{\sf P}_\gamma^{-\alpha-k}(\cos(2\theta_1)){\sf P}_\gamma^{-\alpha-k}(\cos(2\theta_2))\frac{\alpha+k}{\alpha}C_k^\alpha(\cos\phi).
\end{eqnarray}
\end{cor}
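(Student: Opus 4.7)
The plan is to specialize Corollary \ref{limKoorn} to the symmetric case $\beta=\alpha$ and then substitute either the Jacobi--Gegenbauer dictionary \eqref{relJacGeg} (for the Gegenbauer identities \eqref{GegaddnZ}, \eqref{GegaddnX}) or the Jacobi--associated Legendre / Ferrers dictionaries \eqref{relJacAssP}, \eqref{relJacFer} (for the equivalent identities). The crucial structural observation is that the inner sum in Corollary \ref{limKoorn} carries a factor $(\alpha-\beta)_{k-l}$, which reduces to $(0)_{k-l}$ when $\beta=\alpha$; since this equals $1$ for $l=k$ and vanishes otherwise, the double sum collapses to a single sum over $k$ retaining only the $l=k$ term.

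After this collapse I would simplify using (i) the Pochhammer ratios $(\tfrac{\alpha}{2}+1)_k/(\tfrac{\alpha}{2})_k=(\alpha+2k)/\alpha$ and $(-\alpha-2k+1)_k/(-\alpha-2k)_k=(\alpha+k)/(\alpha+2k)$, whose product is $(\alpha+k)/\alpha$; (ii) the identity $(-\alpha-\gamma-k)_k=(-1)^k(\alpha+\gamma+1)_k$, which cancels the stray $(\alpha+\gamma+1)_k$ appearing in the denominator; and (iii) the combination $(\sinh r_1\sinh r_2)^{2k}(\coth r_1\coth r_2)^k=(\sinh(2r_1)\sinh(2r_2))^k/2^{2k}$, obtained from $\sinh(2r)=2\sinh r\cosh r$. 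For the Gegenbauer form I then apply \eqref{relJacGeg} to convert $P_\gamma^{(\alpha,\alpha)}(Z^\pm)$ to $C_\gamma^{\alpha+1/2}(Z^\pm)$ on the left and each $P_{\gamma-k}^{(\alpha+k,\alpha+k)}(\cosh(2r_i))$ to $C_{\gamma-k}^{\alpha+k+1/2}(\cosh(2r_i))$ inside the sum, and finally shift the parameter by $\alpha\mapsto\alpha-\tfrac12$. This delivers the radial Gegenbauer parameters $\alpha$ and $\alpha+k$ together with the angular kernel $C_k^{\alpha-1/2}(\cos\phi)$ of \eqref{GegaddnZ}. The trigonometric identity \eqref{GegaddnX} follows identically from the ${\sf P}/{\sf X}^\pm$ version of Corollary \ref{limKoorn}, noting via \eqref{GegCcutdef} that ${\sf C}_\gamma^\mu(x)=C_\gamma^\mu(x\pm i0)$.

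For the equivalent associated Legendre / Ferrers identities I apply \eqref{relJacAssP} (respectively \eqref{relJacFer}) in place of \eqref{relJacGeg}. The essential feature of \eqref{relJacAssP} for this purpose is that it sends $P_{\gamma-k}^{(\alpha+k,\alpha+k)}$ to $P_{\alpha+\gamma}^{-\alpha-k}$, whose degree $\alpha+\gamma$ is the same $k$-independent constant that also appears on the left-hand side after converting $P_\gamma^{(\alpha,\alpha)}$ to $P_{\alpha+\gamma}^{-\alpha}$; only the order varies with $k$. The $(z^2-1)^{-(\alpha+k)/2}$ normalization supplied by \eqref{relJacAssP} contributes both the outside prefactor $(Z^\pm{}^2-1)^{-\alpha/2}$ and the in-sum factor $(\sinh(2r_1)\sinh(2r_2))^{-\alpha-k}$. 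Relabeling the Legendre degree, $\alpha+\gamma\mapsto\gamma$ (equivalently $\gamma\mapsto\gamma-\alpha$ in the collapsed single-sum identity), then produces the target formula with $P_\gamma^{-\alpha}$ outside, $P_\gamma^{-\alpha-k}$ inside, and the angular kernel $C_k^\alpha(\cos\phi)$. No $\alpha\mapsto\alpha-\tfrac12$ shift is performed here, which is precisely why the angular Gegenbauer parameter remains $\alpha$ rather than $\alpha-\tfrac12$.

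The principal obstacle is the $\Gamma$-ratio bookkeeping in the final matching step. The $k$-dependent normalization produced by \eqref{relJacGeg}, namely $\Gamma(2\alpha+2k+1)\Gamma(\alpha+\gamma+1)/[\Gamma(\alpha+k+1)\Gamma(2\alpha+k+\gamma+1)]$, simplifies by Legendre duplication to its $k=0$ value times $4^k(\alpha+\tfrac12)_k/(2\alpha+\gamma+1)_k$. Combining this with the surviving $(\alpha+k)/\alpha$ from the $l=k$ collapse and the Pochhammer $(-\alpha-\gamma)_k/(\alpha+\gamma+1)_k$ left over from the symmetric specialization, and then applying $\alpha\mapsto\alpha-\tfrac12$, must simplify to the compact target coefficient $[\Gamma(2\alpha)\Gamma(\gamma+1)/\Gamma(2\alpha+\gamma)]\cdot 2^{2k}(\alpha)_k^2/[(-\gamma)_k(2\alpha+\gamma)_k]\cdot(\alpha-\tfrac12+k)/(\alpha-\tfrac12)$. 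Aligning these ratios, in particular cancelling the spurious $(-\alpha-\gamma)_k$ and $(\alpha+\gamma+1)_k$ against the contributions of \eqref{relJacGeg}, is the main computational effort; the associated Legendre version requires the analogous bookkeeping with the $(z^2-1)^{-(\alpha+k)/2}$ factors of \eqref{relJacAssP} in addition to the shift $\gamma\mapsto\gamma-\alpha$.
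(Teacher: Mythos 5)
Your proposal is correct and follows essentially the same route as the paper's own proof: specialize Corollary \ref{limKoorn} to $\beta=\alpha$, where the factor $(\alpha-\beta)_{k-l}=(0)_{k-l}$ collapses the inner sum to the $l=k$ term, then convert the symmetric Jacobi functions via \eqref{relJacGeg}, \eqref{relJacFer}, \eqref{relJacAssP} and relabel the parameter ($\alpha\mapsto\alpha-\tfrac12$) and degree ($\gamma\mapsto\gamma-\alpha$) as needed. Your explicit Pochhammer identities and duplication-formula bookkeeping simply spell out what the paper compresses into ``simplifying,'' so no further comparison is needed.
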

\begin{proof}
Start with Corollary \ref{limKoorn} and let $\alpha=\beta$ using
\eqref{relJacGeg}, 
\eqref{relJacFer} respectively for the Jacobi functions of the first kind on the left-hand side and on the right-hand side. Then mapping
$(2z_1^2-1,2z_2^2-1)\mapsto(z_1,z_2)$,
$(2x_1^2-1,2x_2^2-1)\mapsto(x_1,x_2)$,
where $z_1=\cosh r_1$, $z_2=\cosh r_2$, $x_1=\cos\theta_1$, $x_2=\cos\theta_2$,
and simplifying using \eqref{relJacGeg}--\eqref{relJacAssP} 
completes the proof.

Another way to prove this result is to take
$\beta=-\frac12$, $w=\cos\psi=1$, $\gamma\to2\gamma$ in \eqref{addPhyp} and use the quadratic transformation \eqref{P2g}. After using \eqref{relJacGeg}, this produces the left-hand side of \eqref{GegaddnZ} with degree $4\gamma$
and order given by $\alpha+\frac12$. Because we set $w=1$, the sum over $l$ only survives for $l=0,1$. By taking $4\gamma\mapsto\gamma$ and expressing the contribution due to each of these terms one can identify Gegenbauer's addition theorem through repeated application of \eqref{relJacGeg} on the right-hand side and that
\[
\hspace{0.0cm}\sum_{k=0}^{\infty}\left(f_{2k}+f_{2k+1}\right)=\sum_{k=0}^\infty f_k,
\]
for some sequence $\{f\}_{k\in\mathbb N_0}$, one 
arrives at \eqref{GegaddnZ}.
This other proof is similar for \eqref{GegaddnX}.
\end{proof}

\section{Addition theorems for the Jacobi function of the second kind}
\label{secfunsecond}
Now we present double summation addition theorems for the Jacobi functions of the second kind in the hyperbolic and trigonometric contexts. 
\subsection{The hyperbolic context for the addition theorem for the Jacobi function of the second kind}

\noindent Now, we present the double summation addition theorem for the Jacobi function of the second kind in the hyperbolic context.
\noindent 
Define 
\begin{equation}
\hspace{0.0cm}z_\lessgtr:=\hspace{-0.5cm}\begin{array}{rcl}
&\min{}&\\[-0.1cm]&\max{}&
\end{array}\hspace{-0.5cm}\{z_1,z_2\}, 
\label{zlgt}
\end{equation} 
where ${z_1,z_2}\in(1,\infty)$,
and in the case where ${z_1,z_2}\in\CC$, then if one
takes without loss of generality ${z_1}=z_>$ to lie on 
an ellipse with foci at $\pm 1$, then ${z_2}=z_<$ must 
be chosen to be in the interior of that ellipse.

\begin{thm}
\label{AddnhypQgen}
Let $\gamma,\alpha,\beta\in\mathbb C$, $z_1,z_2\in\mathbb C\setminus(-\infty,1]$,
$x,w\in\mathbb C$, $Z^\pm$
defined in \eqref{ZZdef},
such that the complex variables $\gamma,\alpha,\beta,z_1,z_2,x,w$ are in some
yet to be determined neighborhood of the real line.
Then
\begin{eqnarray}
&&\hspace{-0.40cm}Q_\gamma^{(\alpha,\beta)}(Z^\pm)
=\frac{\Gamma(\alpha+1)\Gamma(\gamma+1)}{\Gamma(\alpha+\gamma+1)}\sum_{k=0}^\infty
\frac{(\alpha+1)_k(\gamma+1)_k}
{(\alpha+k)(\beta+1)_k(1-\gamma)_k}
\nonumber\\
&&\hspace{0.3cm}\times\sum_{l=0}^k
(\pm 1)^{k-l}
(\alpha+k+l)
(z_1z_2)^{k-l}\left(({z_1^2-1})({z_2^2-1})\right)^{\frac{k+l}{2}}
 \nonumber\\[0.1cm]
&&\hspace{0.7cm}\times
P_{\gamma-k}^{(\alpha+k+l,\beta+k-l)}(2z_<^2\!-\!1)
Q_{\gamma-k}^{(\alpha+k+l,\beta+k-l)}(2z_>^2\!-\!1)
w^{k-l}P_{l}^{(\alpha-\beta-1,\beta+k-l)}(2w^2\!-\!1)\frac{\beta\!+\!k\!-\!l}{\beta}C_{k-l}^\beta(x).
\label{addQhyp}
\end{eqnarray}
\end{thm}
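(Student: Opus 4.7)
The plan is to derive the $Q$-addition theorem from the $P$-addition theorem of Theorem \ref{Koornaddnhyp} using the connection formula \eqref{Durconrel} between Jacobi functions of the first and second kind, together with the asymptotic behavior \eqref{Qinfty} at infinity. Setting $\gamma':=-\alpha-\beta-\gamma-1$ (note that $(\gamma')'=\gamma$), I would first write \eqref{Durconrel} at both degrees $\gamma$ and $\gamma'$, obtaining a $2\times 2$ linear system relating $\{P_\gamma^{(\alpha,\beta)}(z),P_{\gamma'}^{(\alpha,\beta)}(z)\}$ to $\{Q_\gamma^{(\alpha,\beta)}(z),Q_{\gamma'}^{(\alpha,\beta)}(z)\}$. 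Solving the system for the $Q$-column yields $Q_\gamma^{(\alpha,\beta)}(Z^\pm)$ as an explicit linear combination of $P_\gamma^{(\alpha,\beta)}(Z^\pm)$ and $P_{\gamma'}^{(\alpha,\beta)}(Z^\pm)$.

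Next, I would insert Theorem \ref{Koornaddnhyp} into each of the two $P$-terms on the right-hand side. Two structural symmetries make the bookkeeping tractable: the $k$-dependent prefactor $(\alpha+\beta+\gamma+1)_k/(-\gamma)_k=(-\gamma')_k/(-\gamma)_k$ in \eqref{addPhypgen} swaps sign under $\gamma\leftrightarrow\gamma'$, and the $l$-dependent factor $(-\beta-\gamma)_l/(\alpha+\gamma+1)_l$ becomes its reciprocal (since $-\beta-\gamma'=\alpha+\gamma+1$). These let the two double series merge into a single double series whose $(k,l)$-th term is a linear combination of the products $P_{\gamma-k}^{(\alpha+k+l,\beta+k-l)}(2z_j^2-1)$ and $P_{\gamma'-k}^{(\alpha+k+l,\beta+k-l)}(2z_j^2-1)$ for $j=1,2$. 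I would then apply the same connection formula \eqref{Durconrel}, now at the shifted parameters $(\alpha+k+l,\beta+k-l)$ and degree $\gamma-k$, to the $z_>$-variable, converting a $P$-factor into a combination of $Q_{\gamma-k}^{(\alpha+k+l,\beta+k-l)}(2z_>^2-1)$ and $Q_{\gamma'-k}^{(\alpha+k+l,\beta+k-l)}(2z_>^2-1)$. The asymptotic \eqref{Qinfty} forces the $Q_{\gamma'-k}$ contribution to vanish: as $z_>\to\infty$ the left-hand side decays like $z_>^{-2(\alpha+\beta+\gamma+1)}$, and of the two candidate asymptotics on the right-hand side (namely $z_>^{-2(\alpha+\beta+\gamma+1)}$ from $Q_{\gamma-k}$ after multiplication by the prefactor $z_>^{2k}$, and $z_>^{2\gamma}$ from $Q_{\gamma'-k}$), only the former is compatible. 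This collapses each $(k,l)$-term to a single $P(z_<)Q(z_>)$ pairing.

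The main obstacle will be the coefficient bookkeeping. The trigonometric factors $\sin(\pi\gamma)$, $\sin(\pi(\alpha+\gamma))$, $\sin(\pi(\beta+\gamma))$ appearing in the two applications of \eqref{Durconrel} multiply across the first and third steps and must telescope, via the reflection identity $\Gamma(z)\Gamma(1-z)=\pi/\sin(\pi z)$, so as to replace $(\alpha+\beta+\gamma+1)_k/(-\gamma)_k$ by $(\gamma+1)_k/(1-\gamma)_k$ and to eliminate the $l$-dependent $\gamma$-Pochhammer ratio, leaving the bare factor $(\alpha+k+l)$ in the inner sum. The sign flip from $(\mp 1)^{k-l}$ to $(\pm 1)^{k-l}$ should emerge from the interplay between the $\gamma\leftrightarrow\gamma'$ swap and a reindexing $l\leftrightarrow k-l$. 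As an independent cross-check I would expand $Q_\gamma^{(\alpha,\beta)}(Z^\pm)$ directly in the parabolic biangle polynomial system of Section \ref{secfunsecond} in analogy with Theorem \ref{IntPdblmsr}, determining the Fourier coefficient by the $z_>\to\infty$ limit inside the orthogonality integral; this avoids much of the trigonometric juggling and should reproduce the same normalization constant.
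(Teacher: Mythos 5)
Your opening step fails: the $2\times 2$ system you propose to invert is singular. By \eqref{WolfP}, $P_{-\gamma-\alpha-\beta-1}^{(\alpha,\beta)}(z)$ is a constant multiple of $P_\gamma^{(\alpha,\beta)}(z)$ (the hypergeometric representation \eqref{Jac1} is invariant under $\gamma\mapsto\gamma'=-\alpha-\beta-\gamma-1$ up to a gamma-function prefactor), so writing \eqref{Durconrel} at the degrees $\gamma$ and $\gamma'$ produces two proportional equations, not two independent ones; the determinant of your system vanishes identically. This reflects the fact that $P_\gamma$ and $P_{\gamma'}$ span only a one-dimensional solution space of \eqref{Jacde}, whereas $Q_\gamma$ and $Q_{\gamma'}$ are linearly independent, so $Q_\gamma^{(\alpha,\beta)}(Z^\pm)$ can never be expressed as a linear combination of $P_\gamma^{(\alpha,\beta)}(Z^\pm)$ and $P_{\gamma'}^{(\alpha,\beta)}(Z^\pm)$. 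With that step gone, the left-hand side of \eqref{addQhyp} never enters your computation, and the subsequent merging of the two $P$-expansions and the telescoping of sine factors has nothing to attach to.

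The later ingredients of your sketch are, however, close to what actually works, and to what the paper does. The paper's proof does not solve for $Q_\gamma(Z^\pm)$; instead it starts from the projected identity of Theorem \ref{IntPdblmsr} (the expansion \eqref{addPhypgen} integrated against a fixed parabolic biangle polynomial), applies \eqref{Durconrel} twice --- once to $P_\gamma^{(\alpha,\beta)}(Z^\pm)$ in the integrand and once to $P_{\gamma-k}^{(\alpha+k+l,\beta+k-l)}(2z_>^2-1)$ on the coefficient side --- and so obtains the mixed relation \eqref{leadrel} in which integrals of both $Q_\gamma(Z^\pm)$ and $Q_{\gamma'}(Z^\pm)$ appear together with the corresponding $P\,Q$ products. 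The key point is then not a term-by-term matching of leading asymptotics, as you suggest, but the observation that the whole relation has the form \eqref{fgeqnzero}, $z_2^{-2(\gamma+\alpha+\beta+1)}{\sf f}(z_2^{-1})=z_2^{2\gamma}{\sf g}(z_2^{-1})$ with ${\sf f},{\sf g}$ analytic at the origin, which forces ${\sf f}={\sf g}=0$ identically when the exponents are incommensurate, followed by analytic continuation in $\gamma$ and an appeal to the Flensted-Jensen--Koornwinder convergence theorem; ${\sf f}=0$ is exactly Corollary \ref{IntQdblmsr}, i.e., the $(k,l)$-th coefficient of \eqref{addQhyp}. Your closing ``cross-check'' (expand $Q_\gamma(Z^\pm)$ in the biangle system and fix the coefficient by the $z_>\to\infty$ limit) is the right object to aim for, but as stated it is incomplete: knowing the limit of the coefficient does not by itself identify it as a constant multiple of $P_{\gamma-k}(2z_<^2-1)\,Q_{\gamma-k}(2z_>^2-1)$; you would still need an argument (the paper's separation argument, or an ODE-plus-boundary-behavior argument in the spirit of the product formula \eqref{PQint}) to pin down the functional form.
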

\begin{proof}
Start with Theorem \ref{IntPdblmsr} which is equivalent
to \eqref{addPhypgen}.
Now use the connection relation which relates the Jacobi function of the first kind with two Jacobi functions of the second kind \eqref{Durconrel} once in the integrand of the double integral and again on the Jacobi function of the first kind
with argument $2z_2^2-1$, assuming without
loss of generality that $z_2=z_>$.
This results in the following equation
\begin{eqnarray}
&&\hspace{-0.15cm} {\sf B}_\gamma^{(\alpha,\beta)}\int_0^1\!\!
\int_0^{\pi}
\!\!\!Q_{\gamma}^{(\alpha,\beta)}(Z^\pm)\,w^{k-l}P_l^{(\alpha-\beta-1,\beta+k-l)}(2w^2\!-\!1)C_{k-l}^\beta(\cos\phi)
\,\dd m^{(\alpha,\beta)}(w,\phi)\nonumber\\
&&\hspace{1.2cm}+{\sf C}_{\gamma,k,l}^{(\alpha,\beta)}(z_1z_2)^{k-l} ((z_1^2-1)(z_2^2-1))^{\frac12(k+l)}
P_{\gamma-k}^{(\alpha+k+l,\alpha+k-l)}(2z_<^2-1) Q_{\gamma-k}^{(\alpha+k+l,\alpha+k-l)}(2z_>^2-1)\nonumber\\
&&\hspace{0.4cm}={\sf D}_\gamma^{(\alpha,\beta)}\int_0^1\!\!
\int_0^{\pi}
\!\!\!Q_{-\alpha-\beta-\gamma-1}^{(\alpha,\beta)}(Z^\pm)w^{k-l}P_l^{(\alpha-\beta-1,\beta+k-l)}(2w^2\!-\!1)C_{k-l}^\beta(\cos\phi)
\,\dd m^{(\alpha,\beta)}(w,\phi)\nonumber\\
&&\hspace{1.2cm}+{\sf E}_{\gamma,k,l}^{(\alpha,\beta)}(z_1z_2)^{k-l}((z_1^2-1)(z_2^2-1))^{\frac12(k+l)}
P_{-\alpha-\beta-\gamma-k-1}^{(\alpha+k+l,\alpha+k-l)}(2z_<^2-1)
 Q_{-\alpha-\beta-\gamma-k-1}^{(\alpha+k+l,\alpha+k-l)}(2z_>^2-1),
\label{leadrel}
\end{eqnarray}
where
\begin{eqnarray}
&&\hspace{-0.8cm}{\sf B}_\gamma^{(\alpha,\beta)}:=\frac{-2\sin(\pi\gamma)\sin(\pi(\beta+\gamma))}{\pi\sin(\pi(\alpha+\beta+2\gamma+1))},
\\
&&\hspace{-0.8cm}{\sf C}_{\gamma,k,l}^{(\alpha,\beta)}:
=\frac{\sin(\pi\gamma)\sin(\pi(\beta+\gamma))\Gamma(\gamma+1)(\alpha+\beta+\gamma+1)_k\Gamma(2\beta+k-l)\Gamma(\alpha-\beta+l)(-\beta-\gamma)_l}{2^{2\beta-1}\sin(\pi(\alpha+\beta+2\gamma+1))\Gamma(\beta)(-\gamma)_k(k-l)!\,l!\,\Gamma(\alpha+\gamma+1+l)},\\
&&\hspace{-0.8cm}{\sf D}_\gamma^{(\alpha,\beta)}:
=
\frac{-2\sin(\pi(\alpha+\gamma))\sin(\pi(\beta+\gamma))\Gamma(\alpha+\gamma+1)\Gamma(\beta+\gamma+1)}
{\pi\sin(\pi(\alpha+\beta+2\gamma+1))\Gamma(\gamma+1)\Gamma(\alpha+\beta+\gamma+1)},\\
&&\hspace{-0.8cm}
{\sf E}_{\gamma,k,l}^{(\alpha,\beta)}:=\frac{\sin(\pi(\alpha+\gamma))\sin(\pi(\beta+\gamma))\Gamma(\beta+\gamma+1)\Gamma(2\beta+k-l)\Gamma(\alpha-\beta+l)}
{2^{2\beta-1}\sin(\pi(\alpha+\beta+2\gamma+1))\Gamma(\beta)\Gamma(\alpha+\beta+\gamma+1)(k-l)!\,l!}.
\end{eqnarray}
Now consider the asymptotics of all four terms 
as $z_2\to\infty$. 
The asymptotic behavior of $Z^\pm$ as $z_2\to\infty$ is $Z^\pm\sim z_2^2$. The behavior
of the Jacobi function of the second kind
as the argument $|z|\to\infty$ is \eqref{Qinfty}
\[
Q_\gamma^{(\alpha,\beta)}(z)\sim \frac{1}{z^{\alpha+\beta+\gamma+1}}.
\]
Therefore, one has
the following asymptotic behavior considered as functions of 
$\zeta=z_>$ with $z_<$ fixed, 
\begin{eqnarray}
&&\hspace{-8.6cm}Q_\gamma^{(\alpha,\beta)}(Z^\pm)
\sim {(Z^\pm)}^{-\gamma-\alpha-\beta-1}\sim \zeta^{-2\gamma-2\alpha-2\beta-2},\\
&&\hspace{-8.6cm}
Q_{-\alpha-\beta-\gamma-1}^{(\alpha,\beta)}(Z^\pm)\sim (Z^\pm)^\gamma\sim \zeta^{2\gamma},\\
&&\hspace{-8.6cm}Q_{\gamma-k}^{(\alpha+k+l,\beta+k-l)}(\zeta)
\sim\zeta^{-\gamma-\alpha-\beta-k-1},\\
&&\hspace{-8.6cm}Q_{-\alpha-\beta-\gamma-1-k}^{(\alpha+k+l,\beta+k-l)}(\zeta)\sim\zeta^{\gamma-k}.
\end{eqnarray}
The above relation \eqref{leadrel} with leading order asymptotic contribution as $z_2\to\infty$ taken out
can be written as a function of two analytic functions
${\sf f}(z_2^{-1}):={\sf f}_{\gamma,k,l}^{(\alpha,\beta)}(z_1,z_2^{-1})$,
${\sf g}(z_2^{-1}):={\sf g}_{\gamma,k,l}^{(\alpha,\beta)}(z_1,z_2^{-1})$,
as
\begin{equation}
z_2^{-2(\gamma+\alpha+\beta+1)} {\sf f}(z_2^{-1})=z_2^{2\gamma} {\sf g}(z_2^{-1}).
\label{fgeqnzero}
\end{equation}
For $4\Re\gamma\not\in -2(\alpha+\beta+1)+\mathbb{Z}$,
the only way the equation
can be true is if ${\sf f}={\sf g}$
identically vanishes. The case of general $\gamma$ then follows by analytic continuation in $\gamma$.
 Therefore we have now verified separately all terms in the double series expansion of the Jacobi function of the second kind given in \eqref{addQhyp}.
Given this, and proof of overall
convergence by Koornwinder and Flensted-Jensen \cite[Theorem 2.1]{FlenstedJensenKoorn79},
this completes the proof.
\end{proof}

\begin{rem}
If you apply \eqref{QtoP} to the Jacobi functions of the second kind on the left-hand side and right-hand side of \eqref{addQhyp}, then it becomes the hyperbolic context of the addition theorem for Jacobi polynomials. 
\end{rem}

\begin{cor}
\label{IntQdblmsr}
Let $k,l\in\N_0$ with $l\le k$, $\gamma,\alpha,\beta\in\mathbb C$, $z_1,z_2\in\mathbb C\setminus(-\infty,1]$,
$Z^\pm$
defined in \eqref{ZZdef},
such that $x=\cos\phi$ and the complex variables $\gamma,\alpha,\beta,z_1,z_2$ are in some
yet to be determined neighborhood of the real line.
Then
\begin{eqnarray}
&&\hspace{-0.7cm}
\int_0^1 \!\int_0^\pi Q_\gamma^{(\alpha,\beta)}(Z^\pm)\, w^{k-l}P_l^{(\alpha-\beta-1,\beta+k-l)}(2w^2\!-\!1)C_{k-l}^\beta(\cos\phi)
 \,\dd m^{(\alpha,\beta)}(w,\phi),\nonumber\\[0.15cm]
&&\hspace{0.00cm}={(\pm 1)^{k+l}}\,{\sf A}^{(\alpha,\beta,\gamma)}_{k,l}
(z_1z_2)^{k-l} ((z_1^2-\!1)(z_2^2\!-\!1))^{\frac12(k+l)} P_{\gamma-k}^{(\alpha+k+l,\beta+k-l)}(2z_<^2\!-\!1)
Q_{\gamma-k}^{(\alpha+k+l,\beta+k-l)}(2z_>^2\!-\!1),
\end{eqnarray}
where {${\sf A}_{k,l}^{(\alpha,\beta,\gamma)}$ is defined in 
\eqref{Akl}.}
\end{cor}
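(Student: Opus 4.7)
The plan is to mimic the proof of Theorem \ref{IntPdblmsr} essentially verbatim, replacing the double-$P$ addition theorem \eqref{addPhypgen} by the $P\cdot Q$ addition theorem \eqref{addQhyp}. Concretely, I would first relabel the summation indices in \eqref{addQhyp} as $k',l'$ to distinguish them from the fixed $k,l$ in the integrand of the corollary. Then I would multiply both sides of \eqref{addQhyp} by the parabolic biangle polynomial ${\mathcal P}_{k,l}^{(\alpha,\beta)}(w,\phi) = w^{k-l}P_l^{(\alpha-\beta-1,\beta+k-l)}(2w^2-1)C_{k-l}^\beta(\cos\phi)$ and by $\dd m^{(\alpha,\beta)}(w,\phi)$, and integrate over $(w,\phi)\in(0,1)\times(0,\pi)$.

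The key step is the orthogonality extraction. Integration in $\phi$ using \eqref{orthogGeg} collapses the Gegenbauer factor to $\delta_{k'-l',\,k-l}$ times its normalization, and integration in $w$ using \eqref{orthogJac} (with $p=k-l$) then collapses the Jacobi factor to $\delta_{l',l}$ (hence $k'=k$) times its normalization. This picks out the single $(k,l)$-term from the double series, leaving exactly $P_{\gamma-k}^{(\alpha+k+l,\beta+k-l)}(2z_<^2-1)\,Q_{\gamma-k}^{(\alpha+k+l,\beta+k-l)}(2z_>^2-1)$ together with the spatial factor $(z_1z_2)^{k-l}((z_1^2-1)(z_2^2-1))^{(k+l)/2}$ and the sign $(\pm 1)^{k-l}$, which I would rewrite as $(\pm 1)^{k+l}$ using $(\pm 1)^{2l}=1$. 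The constant ${\sf A}_{k,l}^{(\alpha,\beta,\gamma)}$ defined in \eqref{Akl} is then assembled by combining the prefactor of \eqref{addQhyp} with the two orthogonality normalizations and the $(\beta+k-l)/\beta$ factor; this reduction is the same ratio-of-gamma-functions calculation that appears in the proof of Theorem \ref{IntPdblmsr}, so no new identities are required.

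The main obstacle will be justifying the interchange of summation and integration on the left-hand side. This requires uniform convergence of the Flensted-Jensen--Koornwinder-type series \eqref{addQhyp} on compact subsets of the parameter domain together with integrability of the boundary singularities of $\dd m^{(\alpha,\beta)}$ (which, for $\alpha>\beta>-1$, are integrable at $w=0,1$ and $\phi=0,\pi$). Convergence in the $Q$-case is inherited from the $P$-case proved by Flensted-Jensen and Koornwinder in \cite{FlenstedJensenKoorn79} via the connection relation \eqref{Durconrel} and the asymptotic-matching argument used in the proof of Theorem \ref{AddnhypQgen}; restricting to the neighborhood of the real line specified in the hypotheses avoids any branch issues in the fractional powers of $z_i^2-1$.

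As an alternative I would mention: one could equivalently avoid \eqref{addQhyp} altogether and derive the corollary directly from Theorem \ref{IntPdblmsr} by applying \eqref{Durconrel} to $P_\gamma^{(\alpha,\beta)}(Z^\pm)$ inside the integral and to $P_{\gamma-k}^{(\alpha+k+l,\beta+k-l)}(2z_>^2-1)$ outside, producing an identity between four integrals of the form \eqref{leadrel}; the two $Q$'s of degree $\gamma$ and $-\alpha-\beta-\gamma-1$ are then separated by matching the $z_>\to\infty$ asymptotics $Q_\gamma^{(\alpha,\beta)}(Z^\pm)\sim z_>^{-2(\alpha+\beta+\gamma+1)}$ and $Q_{-\alpha-\beta-\gamma-1}^{(\alpha,\beta)}(Z^\pm)\sim z_>^{2\gamma}$, exactly as in the proof of Theorem \ref{AddnhypQgen}, followed by analytic continuation in $\gamma$ to remove the generic-$\gamma$ restriction $4\,\Re\gamma\not\in -2(\alpha+\beta+1)+\mathbb Z$. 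This alternative has the minor advantage of not requiring \eqref{addQhyp} as a prerequisite, but is longer than the orthogonality route.
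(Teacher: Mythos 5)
Your proposal is correct in substance, but it is not the route the paper takes. The paper's proof of Corollary \ref{IntQdblmsr} is essentially one line: the identity is precisely the statement ${\sf f}=0$ that was already established inside the proof of Theorem \ref{AddnhypQgen}, where the relation \eqref{leadrel} (obtained by applying \eqref{Durconrel} to Theorem \ref{IntPdblmsr}) is split by the $z_>\to\infty$ asymptotics into ${\sf f}={\sf g}=0$, with ${\sf g}=0$ being the same statement under $\gamma\mapsto-\gamma-\alpha-\beta-1$; notably, this is exactly the ``alternative'' you sketch in your last paragraph. Your main route instead runs the logic in the opposite direction: you take the already-proved addition theorem \eqref{addQhyp}, integrate it against ${\mathcal P}_{k,l}^{(\alpha,\beta)}(w,\phi)\,\dd m^{(\alpha,\beta)}(w,\phi)$, and extract the $(k,l)$-term via the orthogonality relations of Lemma \ref{orthogGegJac}, in parallel with the proof of Theorem \ref{IntPdblmsr}. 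This is not circular, since Theorem \ref{AddnhypQgen} is proved independently, and your bookkeeping (the $\delta_{k',k}\delta_{l',l}$ collapse, $(\pm1)^{k-l}=(\pm1)^{k+l}$, and the assembly of ${\sf A}_{k,l}^{(\alpha,\beta,\gamma)}$ from the prefactor and the two normalization constants) is sound; but it carries an extra analytic burden the paper's route avoids entirely, namely justifying term-by-term integration of a nonterminating double series against a measure with boundary singularities — in the paper the integral identity is the intermediate step from which the series expansion is then assembled, so no interchange of sum and integral is needed for the corollary. In short: your approach buys a self-contained derivation patterned on Theorem \ref{IntPdblmsr} at the cost of a convergence/interchange argument, while the paper's approach gets the corollary for free as a byproduct of the asymptotic-separation proof of Theorem \ref{AddnhypQgen}.
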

\begin{proof}
This follows directly from \eqref{fgeqnzero} since in the proof of Theorem \ref{AddnhypQgen}, we showed that ${\sf f}=0$. The result ${\sf g}=0$ is equivalent to this result under the transformation $\gamma\mapsto-\gamma-\alpha-\beta-1$.
\end{proof}

\begin{rem}
The above integral representations Theorem \ref{IntPdblmsr} and Corollary \ref{IntQdblmsr} are equivalent to the double summation addition theorems for the Jacobi function of the first kind \eqref{addPhypgen} and second kind, Theorem \ref{AddnhypQgen}.
\end{rem}

\begin{rem}
One has the following  well-known product representations which are 
the $k=l=0$ contribution
of the integral representations Theorem \ref{IntPdblmsr} and Corollary \ref{IntQdblmsr}.
Let $x=\cos\phi$, $\gamma,\alpha,\beta\in\mathbb 
C$, $z_1,z_2\in\mathbb C\setminus(-\infty,1]$, 
$Z^\pm$ defined in \eqref{ZZdef},
such that the complex variables 
$\gamma,\alpha,\beta,z_1,z_2$ are in some
yet to be determined neighborhood of the real line.
Then
\begin{eqnarray}
\label{PPint}
&&\hspace{-0.1cm}P_\gamma^{(\alpha,\beta)}(2z_1^2\!-\!1)
P_\gamma^{(\alpha,\beta)}(2z_2^2\!-\!1)\!=\!
\frac{2\Gamma(\alpha\!+\!\gamma\!+\!1)}{\sqrt{\pi}\,\Gamma(\gamma\!+\!1)\Gamma(\beta\!+\!\frac12)\Gamma(\alpha\!-\!\beta)}
\int_0^1
\int_0^{\pi}
\!P_\gamma^{(\alpha,\beta)}(Z^\pm)\,
\dd m^{(\alpha,\beta)}(w,\phi),\\
\label{PQint}
&&\hspace{-0.1cm}P_\gamma^{(\alpha,\beta)}(2z_<^2\!-\!1)
Q_\gamma^{(\alpha,\beta)}(2z_>^2\!-\!1)=
\frac{2\Gamma(\alpha\!+\!\gamma\!+\!1)}{\sqrt{\pi}\,\Gamma(\gamma\!+\!1)\Gamma(\beta\!+\!\frac12)\Gamma(\alpha\!-\!\beta)}
\int_0^1
\int_0^{\pi}
Q_\gamma^{(\alpha,\beta)}(Z^\pm)\,
\dd m^{(\alpha,\beta)}(w,\phi).
\end{eqnarray}
For independent verification of these product representations, see  \cite[Theorem 4.1]{FlenstedJensenKoornwinder73}  
for the product formula \eqref{PPint}
and
 \cite[p.~255]{FlenstedJensenKoornwinder73}
 for the product formula \eqref{PQint}.
\end{rem}

In the case when $z_1,z_2,w,x=\cos\phi$ are real numbers, then the argument of the Jacobi
function of the second kind in the addition theorem for the Jacobi function of the second kind takes a simpler and more convenient form. 
This is analogous to the Flensted-Jensen--Koornwinder addition theorem of the first kind \eqref{addPhyp}.
We present this result now.

\begin{thm}
\label{CohladdnhypQgen}
Let $\gamma,\alpha,\beta,w\in\mathbb R$, $\alpha+\gamma,\beta+\gamma\not\in-\N$, 
$\phi\in[0,\pi]$,
${z_1,z_2}\in(1,\infty)$,
and $Z^\pm$, $z_\lessgtr$,  as defined as in \eqref{ZZrealdef}, \eqref{zlgt}, respectively.
Then
\begin{eqnarray}
&&\hspace{-0.55cm}Q_\gamma^{(\alpha,\beta)}(Z^\pm)
=\frac{\Gamma(\alpha+1)\Gamma(\gamma+1)}{\Gamma(\alpha+\gamma+1)}\sum_{k=0}^\infty
\frac{(\alpha+1)_k(\alpha+\beta+\gamma+1)_k}
{(\alpha+k)(\beta+1)_k(-\gamma)_k}
\nonumber\\
&&\hspace{-0.2cm}\times\sum_{l=0}^k
(\pm 1)^{k+l}
\frac{(\alpha+k+l)(-\beta-\gamma)_l}{(\alpha+\gamma+1)_l}
({z_1z_2})^{k-l}\left({(z_1^2-1)}{(z_2^2-1)}\right)^{\frac{k+l}{2}} \nonumber\\[0.1cm]
&&\hspace{0.0cm}\times
P_{\gamma-k}^{(\alpha+k+l,\beta+k-l)}(2z_<^2\!-\!1)
Q_{\gamma-k}^{(\alpha+k+l,\beta+k-l)}(2z_>^2\!-\!1)
w^{k-l}P_{l}^{(\alpha-\beta-1,\beta+k-l)}(2w^2\!-\!1)
\frac{\beta\!+\!k\!-\!l}{\beta}C_{k-l}^\beta(\cos\phi).
\label{addQhypd1}
\end{eqnarray}
\end{thm}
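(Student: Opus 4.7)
The plan is to deduce Theorem \ref{CohladdnhypQgen} as the restriction of Theorem \ref{AddnhypQgen} to real variables, in exact analogy with how Theorem \ref{Koornaddnhypd1} was deduced from Theorem \ref{Koornaddnhyp} for the first-kind addition theorem. The strategy is to restrict the general identity to the real regime $z_1,z_2\in(1,\infty)$, $w\in\R$, $x=\cos\phi\in[-1,1]$ and verify that under this restriction the algebraic expression for $Z^\pm$ in \eqref{ZZdef} collapses to the compact modulus-squared form \eqref{ZZrealdef}.

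The verification is a one-line expansion:
\[
\bigl|z_1z_2\pm\expe^{i\phi}w\sqrt{z_1^2-1}\sqrt{z_2^2-1}\bigr|^2=z_1^2z_2^2+w^2(z_1^2-1)(z_2^2-1)\pm 2wz_1z_2\cos\phi\sqrt{(z_1^2-1)(z_2^2-1)},
\]
so that multiplication by $2$ and subtraction of $1$ reproduces \eqref{ZZdef} with $x=\cos\phi$, all square roots being positive reals for $z_i>1$. With $Z^\pm$ now matching, the ordering $z_\lessgtr$ from \eqref{zlgt} is unambiguous, the factor $((z_1^2-1)(z_2^2-1))^{(k+l)/2}$ denotes the positive real $((k+l)/2)$-th power of a positive real, and the Gegenbauer factor $C_{k-l}^\beta(x)$ becomes $C_{k-l}^\beta(\cos\phi)$. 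Since $(\pm 1)^{k-l}=(\pm 1)^{k+l}$, the sign factors in \eqref{addQhyp} and \eqref{addQhypd1} also match.

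With these identifications the double series \eqref{addQhyp} of Theorem \ref{AddnhypQgen} transcribes term by term to \eqref{addQhypd1}, so the only remaining task is to confirm that the convergence clause of Theorem \ref{AddnhypQgen} covers all real pairs $(z_1,z_2)\in(1,\infty)^2$. For any distinct real $z_1\ne z_2$ one can choose an ellipse with foci $\pm 1$ on which $z_>$ lies while $z_<$ sits strictly inside; uniform convergence on compact subsets of $\{(z_1,z_2)\in(1,\infty)^2: z_1\ne z_2\}$ is then immediate, and the diagonal $z_1=z_2$ is recovered by continuity of both sides in the off-cut variables.

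I anticipate no substantive obstacle beyond bookkeeping: the analytic heavy lifting—term-by-term verification via asymptotic matching as $z_>\to\infty$ and overall convergence inherited from the Flensted-Jensen--Koornwinder estimate—was already completed inside the proof of Theorem \ref{AddnhypQgen}. If anything requires care, it is simply the consistent choice of positive real branches for $\sqrt{z_i^2-1}$ under the real hypothesis $z_i>1$, which is the same branch convention adopted throughout Section 2.
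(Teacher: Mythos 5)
Your proposal is correct and takes essentially the same route as the paper, whose proof of Theorem \ref{CohladdnhypQgen} is precisely the one-line specialization of Theorem \ref{AddnhypQgen} to real $\gamma,\alpha,\beta,z_1,z_2,w$ and $x=\cos\phi$; the bookkeeping you supply (expanding $\bigl|z_1z_2\pm\expe^{i\phi}w\sqrt{z_1^2-1}\sqrt{z_2^2-1}\bigr|^2$ to recover \eqref{ZZdef}, the identity $(\pm1)^{k-l}=(\pm1)^{k+l}$, the positive branch choice, and the $z_\lessgtr$ ordering) is exactly what the paper leaves implicit. The only point to note is that your ``term-by-term'' transcription tacitly matches the coefficients of \eqref{addQhypd1} (which agree with those of \eqref{addPhypgen} and Corollary \ref{IntQdblmsr}), whereas the Pochhammer factors as printed in \eqref{addQhyp} look different---a presentational discrepancy in the paper rather than a gap in your argument.
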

\begin{proof}
This follows from
Theorem \ref{AddnhypQgen}
by setting the variables
$\gamma,\alpha,\beta,z_1,z_2,w,x=\cos\phi$ to real numbers.
\end{proof}
\subsection{The trigonometric context of the addition theorem for the Jacobi function of the second kind}

In the trigonometric context for the addition theorem for Jacobi functions of the second kind, one must then use the Jacobi function of the second kind on-the-cut ${\sf Q}_\gamma^{(\alpha,\beta)}(x)$ \eqref{Qcutdef}, which are
defined in Section \ref{Jac2cut} and
have a hypergeometric representation
given by \eqref{Qcut1}. Note that this representation is not unique and there are many other double Gauss hypergeometric representations of this function. For more about this see the discussion immediately above \eqref{Qcut1}.
Define 
\begin{equation}
\hspace{0.0cm}x_\lessgtr:=\hspace{-0.5cm}\begin{array}{rcl}
&\min{}&\\[-0.1cm]&\max{}&
\end{array}\hspace{-0.5cm}\{{x_1,x_2}\}, 
\label{xlgt}
\end{equation} 
where ${x_1,x_2}\in(-1,1)$,
and in the case where ${x_1,x_2}\in\CC$, then if one takes without loss of generality ${x_1}=x_>$ to lie on an ellipse with foci at $\pm 1$, then ${x_2}=x_<$ must be chosen to be in the interior of that ellipse.

{
\begin{thm}
\label{IntQcdblmsr}
Let $k,l\in\N_0$, $l\le k$, $\gamma,\alpha,\beta\in\mathbb C$, 
$x_1,x_2\in\mathbb C\setminus((-\infty,-1]\cup[1,\infty))$,
$\alpha\not\in\Z$, $\alpha+\gamma,\beta+\gamma\not\in-\N$, 
${\sf X}^\pm$, $x_\lessgtr$ as defined
in \eqref{XXdef}, \eqref{xlgt} respectively,
such that the complex variables $\gamma,\alpha,\beta,x_1,x_2$ are in some
yet to be determined neighborhood of the real line.
Then
\begin{eqnarray}
&&\hspace{-0.7cm}
\int_0^1 \!\int_0^\pi {\sf Q}_\gamma^{(\alpha,\beta)}(X^\pm) \,w^{k-l}P_l^{(\alpha-\beta-1,\beta+k-l)}(2w^2\!-\!1)C_{k-l}^\beta(\cos\phi)
 \,\dd m^{(\alpha,\beta)}(w,\phi)\nonumber\\[0.15cm]
&&\hspace{-0.20cm}=(\mp 1)^{k+l}{\sf A}^{(\alpha,\beta,\gamma)}_{k,l}
(x_1x_2)^{k-l} ((1\!-\!x_1^2)(1\!-\!x_2^2))^{\frac12(k+l)} {\sf Q}_{\gamma-k}^{(\alpha+k+l,\beta+k-l)}(2x_<^2\!-\!1)
{\sf P}_{\gamma-k}^{(\alpha+k+l,\beta+k-l)}(2x_>^2\!-\!1),
\end{eqnarray}
where {${\sf A}_{k,l}^{(\alpha,\beta,\gamma)}$ is defined in 
\eqref{Akl}.}
\end{thm}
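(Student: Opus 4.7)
The plan is to deduce the trigonometric identity by analytically continuing Corollary \ref{IntQdblmsr} from its hyperbolic domain to the cut via the substitution $z_j = x_j \pm i 0$ with $x_j \in (-1,1)$, followed by forming the weighted average $\tfrac12(e^{i\pi\alpha}[\cdot]_{+i0} + e^{-i\pi\alpha}[\cdot]_{-i0})$ on both sides. By definition \eqref{Qcutdef}, this combination converts $Q_\gamma^{(\alpha,\beta)}(Z^\pm)$ on the left-hand side into ${\sf Q}_\gamma^{(\alpha,\beta)}$ evaluated at the limiting argument, and I can pull the linear combination through the integral.

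Using the branch convention $(z^2-1)^{1/2}=(z+1)^{1/2}(z-1)^{1/2}$, one has $(z_j^2-1)^{1/2}|_{z_j=x_j\pm i 0} = \pm i\,(1-x_j^2)^{1/2}$, so the cross-term coefficient $(z_1^2-1)^{1/2}(z_2^2-1)^{1/2}$ limits to $-(1-x_1^2)^{1/2}(1-x_2^2)^{1/2}$ for either sign choice. This flips the sign of the cross-term in $Z^\pm$, giving $Z^\pm \to {\sf X}^\mp$, so the left-hand side becomes $\int {\sf Q}_\gamma^{(\alpha,\beta)}({\sf X}^\mp)\,{\mathcal P}_{k,l}^{(\alpha,\beta)}(w,\phi)\,dm^{(\alpha,\beta)}(w,\phi)$. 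A final relabeling $\pm \leftrightarrow \mp$ then matches the ${\sf Q}_\gamma^{(\alpha,\beta)}({\sf X}^\pm)$ of the theorem statement.

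On the right-hand side, the prefactor $((z_1^2-1)(z_2^2-1))^{(k+l)/2}$ acquires a factor $(-1)^{k+l}$ under the same limit. The Jacobi function of the first kind $P_{\gamma-k}^{(\alpha+k+l,\beta+k-l)}(2z_<^2-1)$ is continuous across the cut and limits to ${\sf P}_{\gamma-k}^{(\alpha+k+l,\beta+k-l)}(2x_<^2-1)$ by \eqref{Pcutdef}. For $Q_{\gamma-k}^{(\alpha+k+l,\beta+k-l)}(2z_>^2-1)$, the outer weighted average carries phases $e^{\pm i\pi\alpha}$, whereas the on-the-cut definition with parameter $\alpha+k+l$ requires $e^{\pm i\pi(\alpha+k+l)} = (-1)^{k+l}e^{\pm i\pi\alpha}$; the phase mismatch produces a factor $(-1)^{k+l}$ and yields $(-1)^{k+l}{\sf Q}_{\gamma-k}^{(\alpha+k+l,\beta+k-l)}(2x_>^2-1)$. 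The two $(-1)^{k+l}$ factors cancel, leaving the sign $(\pm 1)^{k+l}$ inherited from Corollary \ref{IntQdblmsr}, and the relabeling $\pm \leftrightarrow \mp$ then delivers the $(\mp 1)^{k+l}$ factor of the theorem.

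The main obstacle will be the bookkeeping of approach directions: as $z_j \to x_j \pm i 0$, one must verify that $Z^\pm$ and $2z_>^2-1$ approach ${\sf X}^\mp$ and $2x_>^2-1$ respectively from the correct half-planes, so that the outer weighted combination is genuinely the on-the-cut function prescribed by \eqref{Qcutdef} (and likewise for the inner factor). This reduces to a direct imaginary-part calculation valid for generic real parameters, after which the full parameter range asserted in the theorem follows by analytic continuation in $\gamma$, $\alpha$, and $\beta$, exploiting meromorphicity of both sides.
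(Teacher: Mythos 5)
Your overall route (continue Corollary \ref{IntQdblmsr} onto the cut and apply \eqref{Qcutdef}) is the same one-line idea the paper invokes, but your execution lands on the wrong pairing of the two Jacobi functions with $x_<$ and $x_>$, and that pairing is exactly the point that needs care. Carrying the hyperbolic labels through the boundary limit, you obtain ${\sf P}_{\gamma-k}^{(\alpha+k+l,\beta+k-l)}(2x_<^2-1)\,{\sf Q}_{\gamma-k}^{(\alpha+k+l,\beta+k-l)}(2x_>^2-1)$, whereas the theorem (with $x_\lessgtr$ defined by \eqref{xlgt}) asserts ${\sf Q}_{\gamma-k}^{(\alpha+k+l,\beta+k-l)}(2x_<^2-1)\,{\sf P}_{\gamma-k}^{(\alpha+k+l,\beta+k-l)}(2x_>^2-1)$; these are not equivalent, and the theorem's version is the correct one. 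Indeed, fix $x_1\in(0,1)$, take $\Re\alpha>0$, and let $x_2=x_>\to1^-$: the polynomial factor in the integrand is ${\mathcal P}_{k,l}^{(\alpha,\beta)}(w,\phi)$, so the left-hand side tends to ${\sf Q}_\gamma^{(\alpha,\beta)}(2x_1^2-1)\int{\mathcal P}_{0,0}^{(\alpha,\beta)}{\mathcal P}_{k,l}^{(\alpha,\beta)}\,\dd m^{(\alpha,\beta)}=0$ for $(k,l)\ne(0,0)$, and the paper's right-hand side also vanishes because $(1-x_2^2)^{(k+l)/2}\to0$ while ${\sf P}_{\gamma-k}(2x_2^2-1)\to{\sf P}_{\gamma-k}(1)$ and ${\sf Q}_{\gamma-k}(2x_1^2-1)$ stay finite; with your pairing, \eqref{Qcutnear1} gives ${\sf Q}_{\gamma-k}^{(\alpha+k+l,\beta+k-l)}(2x_2^2-1)\sim c\,(1-x_2^2)^{-\alpha-k-l}$, so your right-hand side blows up like $(1-x_2^2)^{-\alpha-(k+l)/2}$. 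So the formula your argument produces is false, which means the derivation has a genuine flaw, not just unfinished bookkeeping.

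The flaw is structural: in the hyperbolic identity the second-kind function is attached to a definite variable (the one on the larger ellipse), while in the trigonometric identity the second-kind function must sit at the point with the larger $\arccos$, i.e.\ at $x_<$ — the roles flip because cosine is decreasing — and no continuous relabeling of $z_\lessgtr$ can produce that switch. This is tied to the obstacle you deferred: the sign of $\Im Z^\pm$ at $z_j=x_j\pm i\epsilon$ depends on $(x_1,x_2,w,\cos\phi)$ (and the side from which $2z_>^2-1$ meets the cut flips with the sign of $x_>$), and, more seriously, as $z_1,z_2$ are moved down to the cut the argument $Z^\pm$ crosses $(-\infty,1]$ for part of the $(w,\phi)$ range, so the analytic continuation of the integral is no longer the integral of the boundary values of the integrand; your step of "pulling the linear combination through the integral" is precisely what breaks. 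A correct argument has to establish the ${\sf Q}(2x_<^2-1)\,{\sf P}(2x_>^2-1)$ pairing intrinsically in the trigonometric regime (for instance by applying \eqref{Qcutdef} in the variable $2x^2-1$ of the outer function together with an argument, \`a la the asymptotic matching in the proof of Theorem \ref{AddnhypQgen}, that singles out which factor must be regular as $x_>\to1$), rather than by transporting the hyperbolic labels across the cut.
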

}
{
\begin{proof}
Starting with 
Corollary \ref{IntQdblmsr} and directly applying 
\eqref{Qcutdef}
completes the proof.
\end{proof}
}

{
\begin{cor}
\label{IntQcdblmsrconst}
Let  $\gamma,\alpha,\beta\in\mathbb C$, 
$x_1,x_2\in\mathbb C\setminus((-\infty,-1]\cup[1,\infty))$,
$\alpha\not\in\Z$, $\alpha+\gamma,\beta+\gamma\not\in-\N$, 
${\sf X}^\pm$, $x_\lessgtr$ as defined
in \eqref{XXdef}, \eqref{xlgt} respectively,
such that the complex variables $\gamma,\alpha,\beta,x_1,x_2$ are in some
yet to be determined neighborhood of the real line.
Then
\begin{eqnarray}
&&\hspace{-0.7cm}
\int_0^1 \!\int_0^\pi {\sf Q}_\gamma^{(\alpha,\beta)}(X^\pm)
 \,\dd m^{(\alpha,\beta)}(w,\phi)
=\frac{\sqrt{\pi}\,\Gamma(\gamma+1)\Gamma(\beta+\frac12)\Gamma(\alpha-\beta)}{2\Gamma(\alpha+\gamma+1)}
 {\sf Q}_{\gamma}^{(\alpha,\beta)}(2x_<^2\!-\!1)
{\sf P}_{\gamma}^{(\alpha,\beta)}(2x_>^2\!-\!1).
\end{eqnarray}
\end{cor}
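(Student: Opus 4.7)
The plan is to specialize Theorem \ref{IntQcdblmsr} to the case $k=l=0$ and then simplify the resulting constant ${\sf A}^{(\alpha,\beta,\gamma)}_{0,0}$ using the Legendre duplication formula. First I would note that at $k=l=0$, the weight factors in the integrand of Theorem \ref{IntQcdblmsr} all trivialize: $w^{k-l}=1$, $P_0^{(\alpha-\beta-1,\beta)}(2w^2-1)=1$ (a zero-degree Jacobi polynomial), and $C_0^\beta(\cos\phi)=1$. Thus the left-hand side of Theorem \ref{IntQcdblmsr} collapses precisely to the double integral appearing in the corollary.

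On the right-hand side, the corresponding $(k,l)=(0,0)$ instance similarly simplifies: the sign $(\mp 1)^{k+l}=1$, the geometric factors $(x_1x_2)^{k-l}=1$ and $((1-x_1^2)(1-x_2^2))^{(k+l)/2}=1$, and the Jacobi functions of the first and second kind on-the-cut retain their base indices $(\alpha,\beta)$ and degree $\gamma$. What remains is to evaluate
\[
{\sf A}^{(\alpha,\beta,\gamma)}_{0,0}=\frac{\pi\,\Gamma(\gamma+1)\,\Gamma(2\beta)\,\Gamma(\alpha-\beta)}{2^{2\beta}\,\Gamma(\beta)\,\Gamma(\alpha+\gamma+1)},
\]
which follows by setting $k=l=0$ in \eqref{Akl} and observing that every Pochhammer symbol and factorial in the expression reduces to $1$.

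Finally I would apply the Legendre duplication formula $\Gamma(2\beta)=2^{2\beta-1}\pi^{-1/2}\Gamma(\beta)\Gamma(\beta+\tfrac12)$ to rewrite the coefficient as
\[
{\sf A}^{(\alpha,\beta,\gamma)}_{0,0}=\frac{\sqrt{\pi}\,\Gamma(\gamma+1)\,\Gamma(\beta+\tfrac12)\,\Gamma(\alpha-\beta)}{2\,\Gamma(\alpha+\gamma+1)},
\]
matching the constant claimed in the corollary. Since the derivation is a pure specialization of an already-established identity, there is no real obstacle here; the proof is essentially a bookkeeping exercise in gamma-function identities, and the only place where care is needed is the Legendre duplication step that converts $\Gamma(2\beta)/(2^{2\beta}\Gamma(\beta))$ into the normalized $\Gamma(\beta+\tfrac12)/(2\sqrt{\pi})$ form.
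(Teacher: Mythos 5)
Your proposal is correct and coincides with the paper's proof, which likewise obtains the corollary by setting $k=l=0$ in Theorem \ref{IntQcdblmsr}; you merely spell out the simplification of ${\sf A}^{(\alpha,\beta,\gamma)}_{0,0}$ via the Legendre duplication formula, which the paper leaves implicit (the stated constant already being in the duplicated form).
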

}
{
\begin{proof}
Starting with Theorem \ref{IntQcdblmsr} and setting $k=l=0$ completes the proof.
\end{proof}
}

\begin{thm}
\label{AddntrigQgen}
Let $\gamma,\alpha,\beta\in\mathbb C$, 
$x_1,x_2\in\mathbb C\setminus((-\infty,-1]\cup[1,\infty))$,
$\alpha\not\in\Z$, $\alpha+\gamma,\beta+\gamma\not\in-\N$, 
$x,w\in\mathbb C$ with
${\sf X}^\pm$, $x_\lessgtr$ as defined
in \eqref{XXdef}, \eqref{xlgt} respectively,
such that the complex variables $\gamma,\alpha,\beta,x_1,x_2,x,w$ are in some
yet to be determined neighborhood of the real line.
Then
\begin{eqnarray}
&&\hspace{-0.40cm}{\sf Q}_\gamma^{(\alpha,\beta)}({\sf X}^\pm)
=\frac{\Gamma(\alpha+1)\Gamma(\gamma+1)}{\Gamma(\alpha+\gamma+1)}\sum_{k=0}^\infty
\frac{(\alpha+1)_k(\alpha+\beta+\gamma+1)_k}
{(\alpha+k)(\beta+1)_k(-\gamma)_k}
\nonumber\\
&&\hspace{0.2cm}\times\sum_{l=0}^k
(\mp 1)^{k-l}
\frac{(\alpha+k+l)(-\beta-\gamma)_l}{(\alpha+\gamma+1)_l}
({x_1x_2})^{k-l}\left(({1-x_1^2})({1-x_2^2})\right)^{\frac{k+l}{2}}
\nonumber\\[0.1cm]
&&\hspace{0.7cm}\times
{\sf Q}_{\gamma-k}^{(\alpha+k+l,\beta+k-l)}(2x_<^2\!-\!1)
{\sf P}_{\gamma-k}^{(\alpha+k+l,\beta+k-l)}(2x_>^2\!-\!1)
w^{k-l}P_{l}^{(\alpha-\beta-1,\beta+k-l)}(2w^2\!-\!1)\frac{\beta\!+\!k\!-\!l}{\beta}C_{k-l}^\beta(x) .
\label{addQhypd2}
\end{eqnarray}
\end{thm}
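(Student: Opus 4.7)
The plan is to obtain \eqref{addQhypd2} by analytic continuation of the hyperbolic-context identity \eqref{addQhyp} of Theorem \ref{AddnhypQgen} from $z_j\in(1,\infty)$ down to the cut $x_j\in(-1,1)$, precisely as the trigonometric identity \eqref{addPtriggen} was derived from its hyperbolic counterpart \eqref{addPhypgen} in Theorem \ref{Koornaddnhyp}. First I would take the two boundary values of \eqref{addQhyp} as $z_j\to x_j\pm i0$. Under this limit, $(z_j^2-1)^{1/2}\to\pm i(1-x_j^2)^{1/2}$, so that (i) the cross term in $Z^\pm$ flips sign and $Z^\pm\to {\sf X}^\mp$ (this accounts for the $\pm\!\to\!\mp$ switch between \eqref{addQhyp} and the target \eqref{addQhypd2}), and (ii) the right-hand-side factor $((z_1^2-1)(z_2^2-1))^{(k+l)/2}$ picks up the branch factor $e^{\pm i\pi(k+l)}=(-1)^{k+l}$, converting it to $((1-x_1^2)(1-x_2^2))^{(k+l)/2}$ up to this sign.

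Next I would symmetrize: multiply the identity at $z_j=x_j+i0$ by $\tfrac12 e^{i\pi\alpha}$, the identity at $z_j=x_j-i0$ by $\tfrac12 e^{-i\pi\alpha}$, and add. By \eqref{Qcutdef}, the left-hand side becomes ${\sf Q}_\gamma^{(\alpha,\beta)}({\sf X}^\mp)$. On the right-hand side, $P_{\gamma-k}^{(\alpha+k+l,\beta+k-l)}(2z_<^2-1)$ is continuous across the cut by the remark following \eqref{Pcutdef}, passing directly to ${\sf P}_{\gamma-k}^{(\alpha+k+l,\beta+k-l)}(2x_<^2-1)$. The prefactor phases $e^{\pm i\pi\alpha}$ from the left, together with the branch factor $e^{\pm i\pi(k+l)}$ from step (ii), assemble into exactly $e^{\pm i\pi(\alpha+k+l)}$, which is precisely the phase required by the cut definition \eqref{Qcutdef} applied at parameter $\alpha+k+l$ to convert $Q_{\gamma-k}^{(\alpha+k+l,\beta+k-l)}(2z_>^2-1\pm i0)$ into ${\sf Q}_{\gamma-k}^{(\alpha+k+l,\beta+k-l)}(2x_>^2-1)$. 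After a final relabelling $\pm\leftrightarrow\mp$ (absorbing the $(-1)^{k+l}=(-1)^{k-l}$ into the summation sign), one recovers \eqref{addQhypd2}.

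The main obstacle is the careful bookkeeping of branch choices for $(z_j^2-1)^{1/2}$ and verifying that the phases $e^{\pm i\pi\alpha}$ on the left combine consistently with the phase $e^{\pm i\pi(\alpha+k+l)}$ needed to symmetrize the right-hand-side $Q$, so that no residual $k$- or $l$-dependent phase remains and the overall formula is real. A cleaner alternative avoids branch tracking: since $\{\mathcal{P}_{k,l}^{(\alpha,\beta)}(w,\phi)\}$ is a complete orthogonal system on $(0,1)\times(0,\pi)$ with respect to the measure $\dd m^{(\alpha,\beta)}$, one can expand ${\sf Q}_\gamma^{(\alpha,\beta)}({\sf X}^\pm)$, viewed as a function of $(w,\phi)$ with $x_1,x_2$ fixed, in this basis and read off the coefficients from Theorem \ref{IntQcdblmsr} divided by the squared norms supplied by the parabolic biangle orthogonality lemma. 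This produces \eqref{addQhypd2} immediately, with convergence inherited from the Flensted-Jensen--Koornwinder convergence in the hyperbolic context together with the analyticity of all functions involved across the cut.
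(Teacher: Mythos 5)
Your proposal is correct and follows essentially the same route as the paper: the paper's proof simply starts from the hyperbolic addition theorem for the Jacobi function of the second kind and applies the on-the-cut definition \eqref{Qcutdef}, which is exactly your boundary-value-plus-phase symmetrization, with your branch bookkeeping ($Z^\pm\to{\sf X}^\mp$, the factor $e^{\pm i\pi(k+l)}$ combining with $e^{\pm i\pi\alpha}$ to give the cut definition at parameter $\alpha+k+l$) supplying details the paper leaves implicit. Your alternative via the parabolic biangle orthogonality and Theorem \ref{IntQcdblmsr} is likewise consistent with the paper's surrounding machinery, so no gap to report.
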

\begin{proof}
The result follows by starting with the addition theorem 
for Jacobi functions of
the second kind \eqref{addQhypd1} and applying the definition 
\eqref{Qcutdef}
completes the proof.
\end{proof}

In the case when $z_1,z_2,w,x=\cos\phi$ are real numbers, then the argument of the Jacobi
function of the second kind on-the-cut in the addition theorem for the Jacobi function of the second kind takes a simpler and more convenient form. 
This is analogous to the addition theorem \eqref{addPhyp}.
We present this result now.

\begin{cor}
\label{Cohladdnhyp}
Let $\gamma,\alpha,\beta,w\in\mathbb R$, 
$\alpha\not\in\Z$, $\alpha+\gamma,\beta+\gamma\not\in-\N$, 
${x_1,x_2}\in(-1,1)$, $\phi\in[0,\pi]$, 
with ${\sf X}^\pm$, $x_\lessgtr$ as defined 
in \eqref{XXrealdef}, \eqref{xlgt} respectively.
Then
\begin{eqnarray}
&&\hspace{-0.55cm}{\sf Q}_\gamma^{(\alpha,\beta)}({\sf X}^\pm)
=\frac{\Gamma(\alpha+1)\Gamma(\gamma+1)}{\Gamma(\alpha+\gamma+1)}\sum_{k=0}^\infty
\frac{(\alpha+1)_k(\alpha+\beta+\gamma+1)_k}
{(\alpha+k)(\beta+1)_k(-\gamma)_k}
\nonumber\\
&&\hspace{0.0cm}\times\sum_{l=0}^k
(\mp 1)^{k-l}
\frac{(\alpha+k+l)(-\beta-\gamma)_l}{(\alpha+\gamma+1)_l}
({x_1x_2})^{k-l}\left({(1-x_1^2)(1-x_2^2)}\right)^{\frac{k+l}{2}}
 \nonumber\\[0.1cm]
&&\hspace{0.3cm}\times
{\sf Q}_{\gamma-k}^{(\alpha+k+l,\beta+k-l)}(2x_<^2\!-\!1)
{\sf P}_{\gamma-k}^{(\alpha+k+l,\beta+k-l)}(2x_>^2\!-\!1)
w^{k-l}P_{l}^{(\alpha-\beta-1,\beta+k-l)}(2w^2\!-\!1)\frac{\beta\!+\!k\!-\!l}{\beta}C_{k-l}^\beta(\cos\phi).
\label{addQtrig}
\end{eqnarray}
\end{cor}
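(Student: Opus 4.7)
The plan is to recognize that Corollary \ref{Cohladdnhyp} is simply the real-variable specialization of Theorem \ref{AddntrigQgen}, exactly analogous to how Theorem \ref{Koornaddnhypd1} is obtained from Theorem \ref{Koornaddnhyp}. First I would verify the algebraic identity that when $x_1, x_2, w \in \mathbb{R}$ and $x = \cos\phi$ with $\phi \in [0,\pi]$, the general definition \eqref{XXdef} of ${\sf X}^\pm$ collapses to the compact real form \eqref{XXrealdef}. This is a direct computation: expanding
\[
2\left|x_1 x_2 \pm \expe^{i\phi} w \sqrt{1-x_1^2}\sqrt{1-x_2^2}\right|^2 - 1
\]
using $|A \pm B|^2 = |A|^2 + |B|^2 \pm 2\Re(A\overline{B})$ with $A = x_1 x_2$ and $B = \expe^{i\phi} w \sqrt{(1-x_1^2)(1-x_2^2)}$ yields
\[
2 x_1^2 x_2^2 + 2 w^2 (1-x_1^2)(1-x_2^2) \pm 4 x_1 x_2 w \cos\phi \sqrt{(1-x_1^2)(1-x_2^2)} - 1,
\]
which matches \eqref{XXdef} with $x$ replaced by $\cos\phi$ and all remaining variables real.

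Next I would observe that the real tuple $(\gamma, \alpha, \beta, x_1, x_2, \cos\phi, w)$ satisfying the stated restrictions lies inside any neighborhood of the real line covered by Theorem \ref{AddntrigQgen}, and the exclusion conditions $\alpha \notin \mathbb{Z}$ and $\alpha+\gamma, \beta+\gamma \notin -\mathbb{N}$ guarantee that ${\sf Q}_\gamma^{(\alpha,\beta)}$, together with every Jacobi function of the first and second kind on-the-cut appearing under the double summation, is well-defined at these parameter values. Substituting $x = \cos\phi$ into the statement of Theorem \ref{AddntrigQgen} and invoking the algebraic simplification above immediately produces the claimed identity \eqref{addQtrig}, since the two sums and all factors of $(x_1 x_2)^{k-l}$, $((1-x_1^2)(1-x_2^2))^{(k+l)/2}$, etc., transfer verbatim.

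The main subtlety I would need to address is the convergence of the double series when all the variables are real with $x_1, x_2 \in (-1,1)$. This is however not a new argument: convergence of the parent expansion \eqref{addQhypd2} on the complex neighborhood used in Theorem \ref{AddntrigQgen} automatically implies convergence at the real points contained in that neighborhood. The Flensted-Jensen--Koornwinder convergence result \cite[Theorem 2.1]{FlenstedJensenKoorn79}, invoked at the end of the proof of Theorem \ref{AddnhypQgen}, handles the hyperbolic addition theorem, and its trigonometric counterpart is inherited via the analytic continuation through \eqref{Qcutdef} performed in the proof of Theorem \ref{AddntrigQgen}. Thus restricting to real variables preserves convergence, and \eqref{addQtrig} follows with no additional work beyond the one-line algebraic reduction of $\sf X^\pm$.
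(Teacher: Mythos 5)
Your proposal is correct and takes essentially the same route as the paper, whose proof is the one-line specialization of the complex-variable trigonometric addition theorem to real arguments (the paper's proof cites Theorem \ref{CohladdnhypQgen}, apparently intending Theorem \ref{AddntrigQgen}, but the content is the same restriction you describe). Your explicit check that \eqref{XXdef} collapses to the modulus form \eqref{XXrealdef} when $x_1,x_2,w$ are real and $x=\cos\phi$ is exactly the algebraic reduction implicit in the paper's statement, so no gap remains.
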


\begin{proof}
This result follows from Theorem \ref{CohladdnhypQgen} by setting the complex variables to be real.
\end{proof}
\section{Olver normalized Jacobi functions and their addition theorems}
Koornwinder's addition theorem
for Jacobi polynomials with degree $n\in\mathbb N_0$ \eqref{addPtrigfirst} is terminating with the $k$ sum being over $k\in\{0,\ldots,n\}$. One can see this by examination of \eqref{addPhyp}, \eqref{addPtrig}
by recognizing that both Jacobi polynomials $P_{\gamma-k}^{(\alpha+k+l,\beta+k-l)}$ vanish for $\gamma=n\in\mathbb N_0$ and $k\ge n+1$. However, considering the limit as $\gamma\to n$ for all values of $k\in\mathbb N_0$ in Koornwinder's addition theorem, the factor $1/(-\gamma)_k$ blows up for $k\ge n+1$. On the other hand, this factor in the limit when multiplied by the 
Jacobi function of the first kind prefactor containing $1/\Gamma(\gamma-k+1)$, while considering the residues of the gamma function, the product will be
finite, namely
\begin{equation}
\hspace{0.0cm}\lim_{\gamma\to n}\frac{1}
{(-\gamma)_k\Gamma(\gamma-k+1)}=
\lim_{\gamma\to n}\frac{\Gamma(-\gamma)}
{\Gamma(-\gamma+k)\Gamma(\gamma-k+1)}=\frac{(-1)^k}{n!},
\label{limitfinite}
\end{equation}
for $k\ge n+1$.
But when this finite factor is multiplied by the 
second Jacobi polynomial $P_{\gamma-k}^{(\alpha+\beta
+k+l,\beta+k-l)}$ which vanishes for $k\ge n+1$, the 
resulting expression vanishes for all these $k$ values 
which results in a terminating sum over $k\in\{0,\ldots,n\}$.

Unlike Koornwinder's addition theorem for 
Jacobi polynomials, the addition theorem for the 
Jacobi functions of the second kind (see 
\S\ref{secfunsecond}) is not a terminating sum. 
One can see this by examination of \eqref{addQhypd2},
by recognizing that the Jacobi polynomials 
$P_{\gamma-k}^{(\alpha+k+l,\beta+k-l)}$ vanish for 
$\gamma=n\in\mathbb N_0$ and $k\ge n+1$. However, 
considering the limit as $\gamma\to n$ for all values 
of $k\in\mathbb N_0$ in Koornwinder's addition theorem, 
the factor $1/(-\gamma)_k$ blows up for $k\ge n+1$. 
On the other hand, this factor in the limit is multiplied by the 
Jacobi function of the first kind prefactor containing
$1/\Gamma(\gamma-k+1)$, while considering the residues 
of the gamma function, the product will be
finite, namely \eqref{limitfinite},
for $k\ge n+1$.
This finite factor is then multiplied by the Jacobi function 
of the second kind 
$Q_{\gamma-k}^{(\alpha+\beta+k+l,\beta+k-l)}(2z_>^2-1)$ 
which does not vanish for $k\ge n+1$ for 
$\alpha,\beta\not\in\mathbb Z$, unlike the case for 
Jacobi polynomials.

\subsection{Olver normalized Jacobi functions}

We previously introduced Olver's normalization of the Gauss \cite[\href{http://dlmf.nist.gov/15.2.E2}{(15.2.2)}]{NIST:DLMF} and generalized
hypergeometric function \eqref{regrFs}
(see also \cite[\href{http://dlmf.nist.gov/16.2.E5}{(16.2.5)}]{NIST:DLMF})
which results in these functions being entire functions of all of the parameters which appear including all denominator factors.
Olver applied this concept of special normalization previously to the associated Legendre function of 
the second kind
\cite[\href{http://dlmf.nist.gov/14.3.E10}{(14.3.10)}]{NIST:DLMF} (see also \cite[p.~170 
and 178]{Olver:1997:ASF}).
We now demonstrate how to apply this concept to the Jacobi functions of the first and second kind.

In the above description, instead of carefully determining the limits of the 
relevant functions when there are removable singularities due to the appearance of various gamma function prefactors, 
an alternative option is to use appropriately defined Olver normalized Jacobi 
functions and recast the addition theorems correspondingly. 
The benefit of using Olver normalized definitions of the Jacobi functions is that 
one avoids complications due to gamma functions with 
removable singularities. 
Typical examples of these benefits occur in the often 
appearing examples when one has degrees $\gamma$ and 
parameters $\alpha,\beta$ given by integers. 
In these cases, using the standard definitions such as 
those which appear in Theorems \ref{Firstthm}, \ref{thmQ}, 
the appearing functions are not defined and careful limits must be taken. However, if one adopts carefully chosen Olver normalized definitions where 
only the Olver normalized Gauss hypergeometric 
functions are used, then these functions will be 
entire for all values of the parameters. 
As we will see, by using these definitions, we arrive 
at formulas for the addition theorems which are elegant 
and highly useful! First we give our new choice of 
the Olver normalization and then give the relations of the 
Olver normalized definitions in terms of the usual definitions.
Our definitions of Olver normalized Jacobi functions of 
the first and second kind in the hyperbolic and trigonometric contexts are given by
\begin{eqnarray}
\label{hypPbdef}
&&\hspace{-1.10cm}\bm{P}_\gamma^{(\alpha,\beta)}(z):=\Ohyp21{-\gamma,\alpha+\beta+\gamma+1}{\alpha+1}{\frac{1-z}{2}},\\
\label{hypQbdef}
&&\hspace{-1.10cm}\bm{Q}_\gamma^{(\alpha,\beta)}(z):=\frac{2^{\alpha+\beta+\gamma}}{(z-1)^{\alpha+\gamma+1}(z+1)^\beta}\Ohyp21{\gamma+1,\alpha+\gamma+1}{\alpha+\beta+2\gamma+2}{\frac{2}{1-z}},\\
\label{trigPbdef}
&&\hspace{-1.10cm}\bm{\mathsf{P}}_\gamma^{(\alpha,\beta)}(x):=\Ohyp21{-\gamma,\alpha+\beta+\gamma+1}{\alpha+1}{\frac{1-x}{2}},\\
\label{trigQbdef}
&&\hspace{-1.05cm}\bm{\mathsf{Q}}_\gamma^{(\alpha,\beta)}(x):=
\tfrac12\Gamma(\alpha+1)\left(\frac{1+x}{2}\right)^\gamma
\biggl(
\frac{\cos(\pi\alpha)\Gamma(\alpha+\gamma+1)}{\Gamma(\gamma+1)}
\Ohyp21{-\gamma,-\beta-\gamma}{1+\alpha}{\frac{x-1}{x+1}}\nonumber\\
&&\hspace{1.7cm}-\frac{\Gamma(\beta+\gamma+1)}{\Gamma(\alpha+\beta+\gamma+1)}\!
\left(\frac{1+x}{1-x}\right)^\alpha\!
\Ohyp21{-\alpha-\gamma,-\alpha-\beta-\gamma}{1-\alpha}{\frac{x-1}{x+1}}\biggr).
\end{eqnarray}
Therefore one has the following connection relations between the Jacobi functions of the first and second kinds and their Olver normalized counterparts, namely
\begin{eqnarray}
&&\hspace{-7.20cm}{P}_\gamma^{(\alpha,\beta)}(z)=\frac{\Gamma(\alpha+\gamma+1)}{\Gamma(\gamma+1)}\bm{P}_\gamma^{(\alpha,\beta)}(z),
\label{bmPconn}\\
&&\hspace{-7.20cm}{Q}_\gamma^{(\alpha,\beta)}(z)=\Gamma(\alpha+\gamma+1)\Gamma(\beta+\gamma+1)\,\bm{Q}_\gamma^{(\alpha,\beta)}(z),
\label{bmQconn}\\
&&\hspace{-7.20cm}{\sf P}_\gamma^{(\alpha,\beta)}(x)=\frac{\Gamma(\alpha+\gamma+1)}{\Gamma(\gamma+1)}\bm{\mathsf P}_\gamma^{(\alpha,\beta)}(x).
\label{bmPcutconn}
\end{eqnarray}
Note that 
\begin{equation}
\hspace{0.5cm}\bm{\mathsf P}_\gamma^{(\alpha,\beta)}(x)
=\bm{P}_\gamma^{(\alpha,\beta)}(x\pm i0),
\end{equation}
as in \eqref{JacPcutdef}.
Furthermore in the special case $\gamma=0$ one has
\begin{eqnarray}
&&\hspace{-5.6cm}\bm{Q}_0^{(\alpha,\beta)}(z):=\frac{2^{\alpha+\beta}}{(z-1)^{\alpha+1}(z+1)^\beta}\Ohyp21{1,\alpha+1}{\alpha+\beta++2}{\frac{2}{1-z}}.
\end{eqnarray}

\begin{rem}
As of the date of publication of this manuscript, we have been unable to find an Olver normalized version of the Jacobi function of the second kind on-the-cut
$\bm{\mathsf{Q}}_\gamma^{(\alpha,\beta)}(x)$. However we did find
a special normalization of this
function which works well 
when $\gamma=0$ and the $\beta$ parameters
take integer values which is of 
particular importance because they appear in a very important application
(see Section \ref{application} below).
Let $b\in\N_0$. Define
\begin{equation}
\hspace{0.0cm}\mathcal{Q}_{-k}^{(\alpha+k+l,b+k-l)}(x):=\lim_{\gamma\to0,\beta\to b}
(-\beta-\gamma)_l{\sf Q}_{\gamma-k}^{(\alpha+k+l,\beta+k-l)}(x),
\label{limfunQ}
\end{equation}
which is a well-defined function for all $\alpha,b,x,k,l$ in its domain.
\end{rem}

\subsection{Addition theorems for the Olver normalized Jacobi functions}

Now that we've introduced the Olver normalized Jacobi functions of the first and second kind in the hyperbolic and trigonometric contexts, we are in a position to perform the straightforward derivation of the corresponding addition theorems for these functions.

\begin{thm}
\label{AddnhypQgend1}
Let $\gamma,\alpha,\beta\in\mathbb C$, $z_1,z_2\in\mathbb C\setminus(-\infty,1]$,
$x_1,x_2\in\mathbb C\setminus((-\infty,-1]\cup[1,\infty))$,
$x,w\in\mathbb C$, and 
$Z^\pm$, ${\sf X}^\pm$ as defined in 
\eqref{ZZdef}, \eqref{XXdef}, respectively,
such that the complex variables $\gamma,\alpha,\beta,z_1,z_2,x_1,x_2,x,w$ are in some
yet to be determined neighborhood of the real line.
Then
\begin{eqnarray}
&&\hspace{-0.55cm}P_\gamma^{(\alpha,\beta)}(Z^\pm)=
\frac{\Gamma(\alpha+1)\Gamma(\alpha+\gamma+1)}
{\Gamma(\gamma+1)}
\sum_{k=0}^\infty
\frac{(\alpha+1)_k(\alpha+\beta+\gamma+1)_k(-\gamma)_k}{(\alpha+k)(\beta+1)_k}\nonumber\\[-0.15cm]
&&\hspace{0.25cm}\times
\sum_{l=0}^k
(\mp 1)^{k-l}(\alpha+k+l)(\alpha+\gamma+1)_l(-\beta-\gamma)_l(z_1z_2)^{k-l}((z_1^2-1)(z_2^2-1))^{\frac{k+l}{2}}\nonumber\\
&&\hspace{0.25cm}\times
\bm{P}_{\gamma-k}^{(\alpha+k+l,\beta+k-l)}(2z_1^2\!-\!1)
\bm{P}_{\gamma-k}^{(\alpha+k+l,\beta+k-l)}(2z_2^2\!-\!1)w^{k-l}P_l^{(\alpha-\beta-1,\beta+k-l)}(2w^2\!-\!1)\frac{\beta\!+\!k\!-\!l}{\beta} C_{k-l}^\beta(x),\\[0.3cm]
&&\hspace{-0.55cm}Q_\gamma^{(\alpha,\beta)}(Z^\pm)=
\Gamma(\alpha+1)\Gamma(\alpha+\gamma+1)
\Gamma(\beta+\gamma+1)
\sum_{k=0}^\infty
\frac{(\alpha+1)_k(\alpha+\beta+\gamma+1)_k}{(\alpha+k)(\beta+1)_k}\nonumber\\[-0.1cm]
&&\hspace{0.25cm}\times
\sum_{l=0}^k
(\mp 1)^{k-l}(\alpha+k+l)(\alpha+\gamma+1)_l(z_1z_2)^{k-l}((z_1^2-1)(z_2^2-1))^{\frac{k+l}{2}}\nonumber\\
&&\hspace{0.25cm}\times
\bm{Q}_{\gamma-k}^{(\alpha+k+l,\beta+k-l)}(2z_>^2\!-\!1)
\bm{P}_{\gamma-k}^{(\alpha+k+l,\beta+k-l)}(2z_<^2\!-\!1)w^{k-l}P_l^{(\alpha-\beta-1,\beta+k-l)}(2w^2\!-\!1)\frac{\beta\!+\!k\!-\!l}{\beta} C_{k-l}^\beta(x),
\label{Qregularized}
\end{eqnarray}

\begin{eqnarray}
&&\hspace{-0.55cm}{\sf P}_\gamma^{(\alpha,\beta)}({\sf X}^\pm)=
\frac{\Gamma(\alpha+1)\Gamma(\alpha+\gamma+1)}
{\Gamma(\gamma+1)}
\sum_{k=0}^\infty
\frac{(\alpha+1)_k(\alpha+\beta+\gamma+1)_k(-\gamma)_k}{(\alpha+k)(\beta+1)_k}\nonumber\\[-0.15cm]
&&\hspace{0.25cm}\times
\sum_{l=0}^k
(\mp 1)^{k-l}(\alpha+k+l)(\alpha+\gamma+1)_l(-\beta-\gamma)_l(x_1x_2)^{k-l}((1-x_1^2)(1-x_2^2))^{\frac{k+l}{2}}\nonumber\\
&&\hspace{0.25cm}\times
{\bm{\mathsf P}}_{\gamma-k}^{(\alpha+k+l,\beta+k-l)}(2x_1^2\!-\!1)
{\bm{\mathsf P}}_{\gamma-k}^{(\alpha+k+l,\beta+k-l)}(2x_2^2\!-\!1)w^{k-l}P_l^{(\alpha-\beta-1,\beta+k-l)}(2w^2\!-\!1)\frac{\beta\!+\!k\!-\!l}{\beta} C_{k-l}^\beta(x),\\[0.3cm]
&&\hspace{-0.55cm}
{\mathsf{Q}}_\gamma^{(\alpha,\beta)}({\sf X}^\pm)=
\Gamma(\alpha+1)
\sum_{k=0}^\infty
\frac{(-1)^k(\alpha+\beta+\gamma+1)_k(\alpha+1)_k}{(\alpha+k)(\beta+1)_k}\nonumber\\[-0.1cm]
&&\hspace{0.25cm}\times
\sum_{l=0}^k(\mp 1)^{k-l}
(\alpha+k+l)(-\beta-\gamma)_l(x_1x_2)^{k-l}((1-x_1^2)(1-x_2^2))^{\frac{k+l}{2}}\nonumber\\
&&\hspace{0.25cm}\times{\mathsf{Q}}_{\gamma-k}^{(\alpha+k+l,\beta+k-l)}(2x_<^2\!-\!1)
\bm{\mathsf P}_{\gamma-k}^{(\alpha+k+l,\beta+k-l)}(2x_>^2\!-\!1)w^{k-l}P_l^{(\alpha-\beta-1,\beta+k-l)}(2w^2\!-\!1)\frac{\beta\!+\!k\!-\!l}{\beta} C_{k-l}^\beta(x).
\label{Qcutregularized}
\end{eqnarray}
\end{thm}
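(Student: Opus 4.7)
The plan is to derive each of the four identities in Theorem \ref{AddnhypQgend1} by substituting the connection relations \eqref{bmPconn}, \eqref{bmQconn}, \eqref{bmPcutconn} (together with the definition \eqref{trigQbdef} for the on-the-cut Jacobi function of the second kind) into the four addition theorems already proved in the paper: Theorem \ref{Koornaddnhyp} supplies the starting identity for the first and third formulas, Theorem \ref{AddnhypQgen} for the second, and Theorem \ref{AddntrigQgen} for the fourth. Every transformation from the unnormalized Jacobi function to its Olver normalized counterpart is purely a prefactor rewriting, so the argument reduces to a bookkeeping exercise on gamma functions and Pochhammer symbols.

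For the first identity I would write $P_\gamma^{(\alpha,\beta)}(Z^\pm)$ on the left via \eqref{bmPconn} and each inner Jacobi function as
\[
P_{\gamma-k}^{(\alpha+k+l,\beta+k-l)}(\cdot)=\frac{\Gamma(\alpha+\gamma+l+1)}{\Gamma(\gamma-k+1)}\,\bm{P}_{\gamma-k}^{(\alpha+k+l,\beta+k-l)}(\cdot),
\]
then use \eqref{Poch1} in the form $1/(-\gamma)_k=(-1)^k\Gamma(\gamma-k+1)/\Gamma(\gamma+1)$ and $1/(\alpha+\gamma+1)_l=\Gamma(\alpha+\gamma+1)/\Gamma(\alpha+\gamma+1+l)$. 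The two factors of $\Gamma(\gamma-k+1)$ produced by the two inner $\bm{P}$'s combine with $1/(-\gamma)_k$ to leave a single $\Gamma(\gamma-k+1)$ in the numerator, which rebuilds the Pochhammer symbol $(-\gamma)_k$ in the numerator of the regularized sum (accounting for the sign $(-1)^k$ absorbed into $(\mp 1)^{k-l}\mapsto(\mp 1)^{k-l}$). The remaining gamma ratios collapse to the prefactor $\Gamma(\alpha+1)\Gamma(\alpha+\gamma+1)/\Gamma(\gamma+1)$ and supply the factor $(\alpha+\gamma+1)_l$ now sitting in the numerator of the stated formula. The second identity is handled the same way starting from Theorem \ref{AddnhypQgen}, but with the replacement $Q_{\gamma-k}^{(\alpha+k+l,\beta+k-l)}=\Gamma(\alpha+\gamma+l+1)\Gamma(\beta+\gamma-l+1)\bm{Q}_{\gamma-k}^{(\alpha+k+l,\beta+k-l)}$ via \eqref{bmQconn}. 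Here only one $1/\Gamma(\gamma-k+1)$ enters (from the $\bm{P}$ factor at $z_<$), and its cancellation against $1/(-\gamma)_k$ produces the displayed prefactor $\Gamma(\alpha+1)\Gamma(\alpha+\gamma+1)\Gamma(\beta+\gamma+1)$, while the factor $(-\beta-\gamma)_l$ of the original sum disappears because $(-\beta-\gamma)_l\Gamma(\beta+\gamma-l+1)=(-1)^l\Gamma(\beta+\gamma+1)$ (again by \eqref{Poch1}).

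The third identity is immediate from the first since \eqref{bmPcutconn} is parallel to \eqref{bmPconn} and the manipulations above depend only on the prefactors of the Jacobi functions of the first kind; one either redoes the bookkeeping starting from the trigonometric half \eqref{addPtriggen} of Theorem \ref{Koornaddnhyp}, or appeals directly to the boundary-value relation $\bm{\mathsf{P}}_\gamma^{(\alpha,\beta)}(x)=\bm{P}_\gamma^{(\alpha,\beta)}(x\pm i0)$. For the fourth identity I would start from Theorem \ref{AddntrigQgen} and track the prefactors by inspection: the right-hand side of \eqref{addQhypd2} contains two copies of $\Gamma(\alpha+\gamma+l+1)/\Gamma(\gamma-k+1)$ coming from converting ${\sf Q}_{\gamma-k}^{(\alpha+k+l,\beta+k-l)}$ and ${\sf P}_{\gamma-k}^{(\alpha+k+l,\beta+k-l)}$ into their Olver normalized forms (the first via the ratio inherent in \eqref{trigQbdef} against \eqref{Qcutdef} and \eqref{Qcut1}, the second via \eqref{bmPcutconn}); these combine with $1/(-\gamma)_k$ exactly as in the hyperbolic $Q$ case to produce the asserted prefactor $\Gamma(\alpha+1)$ in front with the $(-\gamma)_k$ and $(\alpha+\gamma+1)_l$ factors suitably relocated.

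The main obstacle is not conceptual but rather ensuring consistency of the Olver normalization for ${\mathsf{Q}}_\gamma^{(\alpha,\beta)}$, since the remark following \eqref{bmPcutconn} notes the authors have not found a fully Olver-normalized version of this function; one must therefore verify that the formula \eqref{Qcutregularized}, which still uses the unnormalized ${\mathsf{Q}}$ on the right-hand side, is the correct analogue and that the gamma factors $\Gamma(\beta+\gamma-l+1)$ that appear via the corresponding connection relation for $\mathsf{Q}$ combine with $(-\beta-\gamma)_l$ to leave only an overall $(-\beta-\gamma)_l$ sitting in the numerator (as displayed), without introducing spurious singularities. Once this cancellation pattern is checked in one case, the others follow by direct substitution, and the convergence of each double sum is inherited unchanged from the unnormalized versions in Theorems \ref{Koornaddnhyp}, \ref{AddnhypQgen}, and \ref{AddntrigQgen}.
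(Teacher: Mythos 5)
Your proposal follows exactly the paper's route: the paper's proof is precisely the substitution of the connection relations \eqref{bmPconn}--\eqref{bmPcutconn} into \eqref{addPhypgen}, \eqref{addPtriggen}, \eqref{addQhyp} and \eqref{addQhypd2}, followed by the gamma/Pochhammer bookkeeping you carry out (e.g.\ $1/(-\gamma)_k=(-1)^k\Gamma(\gamma-k+1)/\Gamma(\gamma+1)$ and $(-\beta-\gamma)_l\,\Gamma(\beta+\gamma+1-l)=(-1)^l\Gamma(\beta+\gamma+1)$), and your computations for the first three identities are correct. The only slip is in your sketch of \eqref{Qcutregularized}: since the on-the-cut ${\sf Q}_{\gamma-k}$ is left unnormalized there, only the single factor $\Gamma(\alpha+\gamma+l+1)/\Gamma(\gamma-k+1)$ from converting ${\sf P}_{\gamma-k}$ via \eqref{bmPcutconn} enters (not two copies), and this alone already yields the stated coefficient $\Gamma(\alpha+1)(-1)^k(-\beta-\gamma)_l$ -- a point your final paragraph essentially concedes.
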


\begin{proof}
Substituting 
\eqref{bmPconn}--\eqref{bmPcutconn}
into 
\eqref{addPhypgen}, 
\eqref{addPtriggen},
\eqref{addQhyp},
\eqref{addQhypd2} as necessary
completes the proof.
\end{proof}

There are also the corresponding expansions that are 
sometimes useful with the $l$ sum reversed, i.e., 
making the replacement $l'=k-l$ and then replacing 
$l'\mapsto l$ in Theorem \ref{AddnhypQgen}. 
These are given as follows.
\begin{cor}
\label{AddnhypPQrev}
Let $\gamma,\alpha,\beta\in\mathbb 
C$, $z_1,z_2\in\mathbb C\setminus(-\infty,1]$,
$x_1,x_2\in\mathbb C\setminus((-\infty,-1]\cup[1,\infty))$,
$x,w\in\mathbb C$, with 
$Z^\pm$, ${\sf X}^\pm$ as defined in 
\eqref{ZZdef}, \eqref{XXdef}, respectively,
and
the complex variables $\gamma,\alpha,\beta,z_1,z_2,x_1,x_2,x,w$ 
are in some yet to be determined neighborhood of the real line.
Then
\begin{eqnarray}
&&\hspace{-0.55cm}P_\gamma^{(\alpha,\beta)}(Z^\pm)=
\frac{\Gamma(\alpha+1)\Gamma(\alpha+\gamma+1)}
{\Gamma(\gamma+1)}
\sum_{k=0}^\infty
\frac{(\alpha+1)_k(\alpha+\beta+\gamma+1)_k(-\gamma)_k}{(\alpha+k)(\beta+1)_k}\nonumber\\[-0.15cm]
&&\hspace{0.25cm}\times
\sum_{l=0}^k
(\mp 1)^{l}(\alpha+2k-l)(\alpha+\gamma+1)_{k-l}(-\beta-\gamma)_{k-l}(z_1z_2)^{l}((z_1^2-1)(z_2^2-1))^{\frac{2k-l}{2}}\nonumber\\
&&\hspace{1.25cm}\times\bm{P}_{\gamma-k}^{(\alpha+2k-l,\beta+l)}(2z_1^2\!-\!1)
\bm{P}_{\gamma-k}^{(\alpha+2k-l,\beta+l)}(2z_2^2\!-\!1)
w^{l}P_{k-l}^{(\alpha-\beta-1,\beta+l)}(2w^2\!-\!1)\frac{\beta+l}{\beta} C_{l}^\beta(x),\\[0.3cm]
&&\hspace{-0.55cm}Q_\gamma^{(\alpha,\beta)}(Z^\pm)=
\Gamma(\alpha+1)\Gamma(\alpha+\gamma+1)
\Gamma(\beta+\gamma+1)
\sum_{k=0}^\infty
\frac{(\alpha+1)_k(\alpha+\beta+\gamma+1)_k}{(\alpha+k)(\beta+1)_k}\nonumber\\[-0.1cm]
&&\hspace{0.25cm}\times
\sum_{l=0}^k
(\mp 1)^{l}(\alpha+2k-l)(\alpha+\gamma+1)_{k-l}(z_1z_2)^{l}((z_1^2-1)(z_2^2-1))^{\frac{2k-l}{2}}\nonumber\\
&&\hspace{1.25cm}\times
\bm{Q}_{\gamma-k}^{(\alpha+2k-l,\beta+l)}(2z_>^2\!-\!1)
\bm{P}_{\gamma-k}^{(\alpha+2k-l,\beta+l)}(2z_<^2\!-\!1)w^{l}P_l^{(\alpha-\beta-1,\beta+l)}(2w^2\!-\!1)\frac{\beta+l}{\beta} C_{l}^\beta(x),
\label{Qregularizedd1}
\end{eqnarray}

\begin{eqnarray}
&&\hspace{-0.55cm}{\sf P}_\gamma^{(\alpha,\beta)}({\sf X}^\pm)=
\frac{\Gamma(\alpha+1)\Gamma(\alpha+\gamma+1)}
{\Gamma(\gamma+1)}
\sum_{k=0}^\infty
\frac{(\alpha+1)_k(\alpha+\beta+\gamma+1)_k(-\gamma)_k}{(\alpha+k)(\beta+1)_k}\nonumber\\[-0.15cm]
&&\hspace{0.25cm}\times
\sum_{l=0}^k
(\mp 1)^{l}(\alpha+2k-l)(\alpha+\gamma+1)_{k-l}(-\beta-\gamma)_{k-l}(x_1x_2)^{l}((x_1^2-1)(x_2^2-1))^{\frac{2k-l}{2}}\nonumber\\
&&\hspace{1.25cm}\times
\bm{\mathsf P}_{\gamma-k}^{(\alpha+2k-l,\beta+l)}(2x_1^2\!-\!1)
\bm{\mathsf P}_{\gamma-k}^{(\alpha+2k-l,\beta+l)}(2x_2^2\!-\!1)
w^{l}P_{k-l}^{(\alpha-\beta-1,\beta+l)}(2w^2\!-\!1)\frac{\beta+l}{\beta} C_{l}^\beta(x),
\label{Pcutregularizedd1}\\[0.3cm]
&&\hspace{-0.55cm}{\mathsf Q}_\gamma^{(\alpha,\beta)}({\sf X}^\pm)=
\Gamma(\alpha+1)
\sum_{k=0}^\infty(-1)^k
\frac{(\alpha+1)_k(\alpha+\beta+\gamma+1)_k}{(\alpha+k)(\beta+1)_k}\nonumber\\[-0.1cm]
&&\hspace{0.25cm}\times
\sum_{l=0}^k
(\mp 1)^{l}(\alpha+2k-l)(-\beta-\gamma)_{k-l}(x_1x_2)^{l}((x_1^2-1)(x_2^2-1))^{\frac{2k-l}{2}}\nonumber\\
&&\hspace{1.25cm}\times
{\mathsf Q}_{\gamma-k}^{(\alpha+2k-l,\beta+l)}(2x_<^2\!-\!1)
\bm{\mathsf P}_{\gamma-k}^{(\alpha+2k-l,\beta+l)}(2x_>^2\!-\!1)
w^{l}P_{k-l}^{(\alpha-\beta-1,\beta+l)}(2w^2\!-\!1)\frac{\beta+l}{\beta} C_{l}^\beta(x).
\label{Qcutregularizedd1}
\end{eqnarray}
\end{cor}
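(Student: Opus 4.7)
The plan is to derive all four expansions directly from Theorem \ref{AddnhypQgend1} by performing a single reindexing of the inner sum over $l$. For each of the four addition formulas in Theorem \ref{AddnhypQgend1}, the inner sum runs over $l \in \{0, 1, \ldots, k\}$, so the substitution $l' = k - l$ is a bijection of this index set, and after relabeling $l' \mapsto l$ it simply reverses the order of summation. No convergence issue arises, since each inner sum is a finite sum.

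Next I would carefully track how every factor transforms under $l \mapsto k - l$. The sign becomes $(\mp 1)^{k-l} \mapsto (\mp 1)^l$; the coefficient $(\alpha + k + l) \mapsto (\alpha + 2k - l)$; the Pochhammer symbols $(-\beta-\gamma)_l \mapsto (-\beta-\gamma)_{k-l}$ and $(\alpha+\gamma+1)_l \mapsto (\alpha+\gamma+1)_{k-l}$; the monomial $(z_1 z_2)^{k-l} \mapsto (z_1 z_2)^l$; and the half-integer power $((z_1^2-1)(z_2^2-1))^{(k+l)/2} \mapsto ((z_1^2-1)(z_2^2-1))^{(2k-l)/2}$, with the analogous substitutions for ${\sf X}^\pm$. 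The parameters of the two Jacobi functions of the first (or second) kind transform as $(\alpha+k+l, \beta+k-l) \mapsto (\alpha+2k-l, \beta+l)$. The Jacobi polynomial in $w$ becomes $P_l^{(\alpha-\beta-1, \beta+k-l)}(2w^2 - 1) \mapsto P_{k-l}^{(\alpha-\beta-1, \beta+l)}(2w^2 - 1)$, the Gegenbauer polynomial becomes $C_{k-l}^\beta(x) \mapsto C_l^\beta(x)$, the factor $w^{k-l} \mapsto w^l$, and the prefactor $(\beta + k - l)/\beta \mapsto (\beta + l)/\beta$.

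Collecting these substitutions term by term in each of the four expansions from Theorem \ref{AddnhypQgend1} yields directly the four claimed identities for $P_\gamma^{(\alpha,\beta)}(Z^\pm)$, $Q_\gamma^{(\alpha,\beta)}(Z^\pm)$, ${\sf P}_\gamma^{(\alpha,\beta)}({\sf X}^\pm)$, and ${\sf Q}_\gamma^{(\alpha,\beta)}({\sf X}^\pm)$. In particular, the ${\sf X}^+$ piece of \eqref{Pcutregularizedd1} uses the same $(1-x_i^2)$ versus $(x_i^2-1)$ convention fixed by the branch prescription for $(z^2-1)^\alpha$ given in the Preliminaries, so no sign ambiguity enters. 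Likewise for the Jacobi-function-of-the-second-kind expansions, the role of $z_<, z_>$ (resp.\ $x_<, x_>$) is preserved since the reindexing does not touch those variables.

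There is no serious obstacle: the only possible pitfall is bookkeeping, making sure that every one of the ten or so factors in each summand transforms consistently and that the prefactor in front of the double sum is unchanged. The hardest part will be simply verifying the half-integer exponent on $((z_1^2-1)(z_2^2-1))$ and its counterpart on $((1-x_1^2)(1-x_2^2))$, since the exponent $(k+l)/2$ becomes $(2k - l)/2$, which is the only place where the $k$-dependence enters the exponent rather than the index. Once this bookkeeping is carried out, the proof is complete by reducing to Theorem \ref{AddnhypQgend1}.
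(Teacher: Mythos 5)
Your proposal is correct and coincides with the paper's own argument, which simply makes the replacement $l\mapsto k-l$ in Theorem \ref{AddnhypQgend1}; your version just spells out the term-by-term bookkeeping of that finite reindexing. Nothing further is needed.
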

\begin{proof}
Making the replacement $l\mapsto k-l$ in 
Theorem \ref{AddnhypQgend1} completes the proof.
\end{proof}

By examining the expansion of the Jacobi function of 
the first kind, one can see that the $(-\gamma)_k$ 
shifted factorial in this alternative expansion is 
moved from the denominator to the numerator, so it is 
more natural for Jacobi polynomials where the sum is 
terminating.
One can see the benefit is that all the functions 
involved in the expansions are well-defined for all 
values of the parameters, including integer values. 
These expansions are extremely useful for expansions 
of fundamental solutions of rank-one symmetric spaces 
where all the degrees and parameters are given by integers. 
One no longer has any difficulties with the various 
functions not being defined for certain parameter values. 
This is completely resolved. One example is that in 
the integer context for the Jacobi function of 
the second kind, the functions appear with degree equal 
to $\gamma-k$ for all $k\in\mathbb N_0$. 
These quickly become undefined for negative values 
of the degree. However, since the Olver normalized Jacobi 
functions are entire functions, there is no longer 
any problem here. These alternative expansions are 
highly desirable!

\subsection{Application to the non-compact and compact symmetric spaces of rank one}
\label{application}

Let $d=\dim_{\mathbb{R}}\mathbb{K}$, where ${\mathbb{K}}$ is equal to either the real numbers $\R$, the complex numbers $\CC$, the quaternions $\HH$ or the octonions $\OO$.
For $d=1$, namely the real case, these are Riemannian manifolds of constant curvature which include Euclidean $\mathbb R^n$ space, real hyperbolic geometry $\mathbb R\Hi_R^n$ (noncompact) and real hyperspherical (compact) geometry $\mathbb R\Si_R^n$, in various models. For $d\in\{2,4,8\}$,
it is well known that there exists isotropic Riemannian manifolds of both noncompact and compact type which are referred to as the rank one symmetric spaces, see for instance \cite{Helgason84}.
These include the symmetric spaces given by the complex hyperbolic ${\CC\Hi^n_R}$, quaternionic hyperbolic ${\HH\Hi^n_R}$, and the octonionic hyperbolic plane ${\OO\Hi^2_R}$ and the 
complex projective space ${\CC\Pii^n_R}$, quaternionic projective space ${\HH\Pii^n_R}$, and the octonionic projective (Cayley) plane ${\OO\Pii^2_R}$,
where $R>0$ is their corresponding radii of curvatures.
The complex, quaternionic and octonionic rank one symmetric spaces have real dimension given by 
$2n$, 
$4n$ 
and 
$16$.
For a description of the Riemannian manifolds given by the rank one symmetric spaces, 
see for instance \cite{Helgason59,Helgason78,Helgason84} and the references therein.

Riemannian symmetric spaces, compact and non-compact, come in infinite series (4 corresponding to simple complex groups and 7 corresponding to real simple groups) and a finite class of exceptional spaces, see \cite[p.~516, 518]{Helgason59}. Each of
those come with a commutative algebra of invariant differential operators and correspondingly, a class of eigenfunctions, the spherical functions. Which in the above discussed rank one examples are hypergeometric functions. This has been used as
a motivation for several generalizations of the classical Gauss hypergeometric functions. In particular the Heckman-Opdam
hypergeometric functions \cite{HeckmanSchlichtkrull94,
HeckmanOpdam87,HeckmanOpdam2021,Opdam1988}. Another direction for generalization is the work by Macdonald
 \cite{Macdonald2013} and related publications.

Due to the isotropy of the symmetric spaces of rank one, a fundamental solution of the Laplace-Beltrami operator on these manifolds can be obtained by solving a one-dimensional ordinary differential equation given in terms of the geodesic distance. Laplace's equation is satisfied on these manifolds when the Laplace-Beltrami operator acts on an unknown function and the result is zero. 
In geodesic polar coordinates  the Laplace-Beltrami operator is given in the rank one noncompact (hyperbolic) symmetric spaces by
\begin{eqnarray}
&&\hspace{-3.5cm}\Delta= \frac{1}{R^2}\left\{\frac{\partial^2}{\partial r^2}+\left[d(n-1)\coth r+2(d-1)\coth (2 r )\right]
\frac{\partial}{\partial r}+\frac{1}{\sinh^2r}\Delta_{K/M}
\right\}\label{eq-Laplace1}\\
&&\hspace{-3.1cm}=
\frac{1}{R^2}
\left\{
\frac{\partial^2}{\partial r^2}+\left[(dn-1)\coth r+(d-1)\tanh r\right]
\frac{\partial}{\partial r}+\frac{1}{\sinh^2r}\Delta_{K/M}
\right\}\\
&&\hspace{-3.1cm}=:\frac{1}{R^2}\left(\Delta_r+\frac{1}{\sinh^2r}\Delta_{K/M}\right),
\end{eqnarray}
and on the rank one compact (projective) spaces it is given by 
\begin{eqnarray}
&&\hspace{-3.5cm}\Delta= \frac{1}{R^2}\left\{\frac{\partial^2}{\partial \theta^2}+\left[d(n-1)\cot \theta+2(d-1)\cot (2 \theta )\right]
\frac{\partial}{\partial \theta}+\frac{1}{\sin^2\theta}\Delta_{K/M}
\right\}\label{eq-Laplace1}\\
&&\hspace{-3.1cm}=
\frac{1}{R^2}
\left\{
\frac{\partial^2}{\partial \theta^2}+\left[(dn-1)\cot \theta+(d-1)\tan \theta\right]
\frac{\partial}{\partial \theta}+\frac{1}{\sin^2\theta}\Delta_{K/M}
\right\} \\
&&\hspace{-3.1cm}=:\frac{1}{R^2}\left(\Delta_\theta+\frac{1}{\sin^2\theta}\Delta_{K/M}\right),
\end{eqnarray}
where $r$ and $\theta$ are the geodesic distance on the noncompact and compact rank one symmetric spaces respectively (see \cite[Lemma 21]{Helgason59}).
For a spherically symmetric solution such as a fundamental solution, the contribution from $\Delta_{K/M}$ vanishes and one needs to solve Laplace's equation for radial solutions, namely
\begin{equation}
\hspace{1.0cm}\Delta_r \,u(r)=0,\quad
\Delta_\theta\, v(\theta)=0.
\end{equation}
For the solution to these equations, the homogeneous solutions to the second order ordinary differential equation which appears are given by Jacobi/hypergeometric functions (see \cite[p.~484]{Helgason84}).
It can be easily verified that a basis for radial solutions can be given by 
\begin{eqnarray}
&&\hspace{-7cm}u(r)=a\,P_0^{(\alpha,\beta)}(\cosh(2r))+b\,Q_0^{(\alpha,\beta)}(\cosh(2r)).\\
&&\hspace{-7cm}v(\theta)=c\,{\sf  P}_0^{(\alpha,\beta)}(\cos(2\theta))+d\,{\sf Q}_0^{(\alpha,\beta)}(\cos(2\theta)),
\end{eqnarray}
where on the complex, quaternionic and octonionic rank one symmetric spaces one has $\alpha\in\{n-1,2n-1,7\}$
and $\beta\in\{0,1,3\}$ respectively \cite[Table 1, p.~265]{Koornwinder79}.
Furthermore, for a fundamental solution, the solutions need to be singular at the origin and match up to a Euclidean fundamental solutions locally. This requires that the solutions should be irregular at the origin ($r=0$ and $\theta=0$). 
Therefore fundamental solutions must correspond to the solutions which are the functions of the second kind. Hence for a fundamental solution of Laplace's equation $a=c=0$ and we must determine $b$ and $d$ which will be a function of $d$, $n$ and $R$.
\begin{rem}
Note that the general homogeneous solution as a function of the geodesic coordinate includes contribution from both the function of the first kind and function of the second kind.  However, the function of the first kind with $\gamma=0$ is simply the constants $a$ and $c$ since $P_0^{(\alpha,\beta)}(z)=1$ \eqref{Pzero} (same for the functions on-the-cut). 
On the other hand, in the case of non-spherically symmetric homogeneous solutions there will be contributions due to the function of the first kind because then the contribution to the $\Delta_{K/M}$ term will be non-zero.
\end{rem}
Let $\bfx,\bfxp\in\R^s$, then a Euclidean fundamental solution of Laplace's equation is given by (see for instance \cite[p.~202]{GelfandShilov})
\begin{equation}
\mcg^s({\bf x},{\bf x}^\prime)=
\left\{ \begin{array}{ll}
\displaystyle\frac{\Gamma(s/2)}{2\pi^{s/2}(s-2)}\|{\bf x}-{\bf x}^\prime\|^{2-s}
& \qquad\mathrm{if}\ s=1\mathrm{\ or\ }s\ge 3,\\[10pt]
\displaystyle\frac{1}{2\pi}\log\|{\bf x}-{\bf x}^\prime\|^{-1}
& \qquad\mathrm{if}\  s=2. 
\end{array} \right.
\label{Eucfundsol}
\end{equation}
For a description of opposite antipodal fundamental solutions on the real hypersphere see \cite{Cohlhypersphere}.
The above analysis leads us to the following theorem.
\begin{thm}
A fundamental solution and an opposite antipodal fundamental solution of the
Laplace-Beltrami operator on
the rank one noncompact and compact 
symmetric spaces respectively  given in terms of the geodesic radii $r\in[0,\infty)$, $\theta\in[0,\pi/2]$ on these manifolds are given by 
\begin{eqnarray}
&&\hspace{-8.0cm}
\mcC(r)
=
\frac{(n-1)!}{2\pi^nR^{2n-2}}Q_0^{(n-1,0)}(\cosh(2r)),\\[0.05cm]
&&\hspace{-8.0cm}\mcH(r)=
\frac{(2n)!}{2\pi^{2n}R^{4n-2}}Q_0^{(2n-1,1)}(\cosh(2r)),\\[0.05cm]
&&\hspace{-8.0cm}\mcO
(r)=\frac{302\,400}{\pi^8 R^{14}}Q_0^{(7,3)}(\cosh(2r)),\\
&&\hspace{-8.0cm}\mpC(\theta)=
\frac{(n-1)!}{2\pi^nR^{2n-2}}{\mathsf Q}_0^{(n-1,0)}(\cos(2\theta)),\\
&&\hspace{-8.0cm}\mpH(\theta)=
\frac{(2n)!}{2\pi^{2n}R^{4n-2}}{\mathsf Q}_0^{(2n-1,1)}(\cos(2\theta)),\\
&&\hspace{-8.0cm}\mpO(\theta)=\frac{302\,400}{\pi^8 R^{14}}{\mathsf Q}_0^{(7,3)}(\cos(2\theta)).
\end{eqnarray}
\end{thm}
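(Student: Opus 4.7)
The plan is to handle all six formulas in parallel by noting that the parameter identifications $(\alpha,\beta)=((dn-2)/2,(d-2)/2)$ collapse $d\in\{2,4,8\}$ onto the three concrete pairs $(n-1,0)$, $(2n-1,1)$, $(7,3)$. First I would verify radial harmonicity by substituting $z=\cosh 2r$ (respectively $z=\cos 2\theta$) with $\gamma=0$ into the Jacobi differential equation \eqref{Jacde}. Using the elementary identities $\coth 2r=\tfrac12(\coth r+\tanh r)$ and $1/\sinh 2r=\tfrac12(\coth r-\tanh r)$ together with their trigonometric analogues, a chain-rule computation reduces \eqref{Jacde} to
\[
u''+\bigl[(2\alpha+1)\coth r+(2\beta+1)\tanh r\bigr]u'=0,
\]
which matches $\Delta_r u=0$ precisely when $2\alpha+1=dn-1$ and $2\beta+1=d-1$. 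Because $P_0^{(\alpha,\beta)}=1$ by \eqref{Pzero}, the function $Q_0^{(\alpha,\beta)}(\cosh 2r)$ (respectively ${\sf Q}_0^{(\alpha,\beta)}(\cos 2\theta)$) is then the unique irregular, spherically symmetric harmonic solution on the regular locus, and is therefore the correct candidate.

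Next I would fix the overall constant by matching Euclidean asymptotics at the source. The identity \eqref{Qnear1} at $\gamma=0$ combined with $\cosh 2r-1\sim 2r^2$ as $r\to 0^+$ gives
\[
Q_0^{(\alpha,\beta)}(\cosh 2r)\sim\frac{\Gamma(\alpha)\Gamma(\beta+1)}{2\Gamma(\alpha+\beta+1)\,r^{2\alpha}},
\]
and \eqref{Qcutnear1} produces the same leading expression in $\theta$. Since the real dimension is $s=dn=2\alpha+2$ and the geodesic distance is $Rr$ (respectively $R\theta$) to first order, the Euclidean fundamental solution \eqref{Eucfundsol} behaves like $\Gamma(\alpha)/[4\pi^{\alpha+1}(Rr)^{2\alpha}]$ locally. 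Equating these forces the overall constant to equal $\Gamma(\alpha+\beta+1)/[2\pi^{\alpha+1}R^{2\alpha}\Gamma(\beta+1)]$, and substituting $(n-1,0)$, $(2n-1,1)$, $(7,3)$ reproduces the three prefactors $(n-1)!/(2\pi^nR^{2n-2})$, $(2n)!/(2\pi^{2n}R^{4n-2})$, and $302\,400/(\pi^8R^{14})$ appearing in the theorem.

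For the compact spaces I must additionally verify the second singularity at the antipode $\theta=\pi/2$, since on a compact manifold without boundary one always has $\int\Delta\mcg=0$, and the natural distributional identity is $\Delta\mcg=\delta_o-\delta_{\bar o}$ (the opposite-antipodal source). Using the explicit finite-sum-plus-logarithm representation of Theorem~\ref{Qcutsub} (rather than \eqref{Qcut1}, whose $\sin(\pi\alpha)$ denominator degenerates at the integer parameters of interest) together with the expansion $1+\cos 2\theta\sim 2(\pi/2-\theta)^2$, one reads off the leading behavior near $\theta=\pi/2$ from the $\log\frac{1+x}{1-x}$ term. The sign flip of this logarithm relative to its behavior near $\theta=0$ yields precisely the $-\delta_{\bar o}$ contribution, with the normalization already fixed by the previous step.

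The main obstacle is this antipodal analysis. Because both $\alpha$ and $\beta$ are integers for every rank-one case under consideration, the singularities of the generic hypergeometric representations are removable and the antipodal asymptotics cannot be extracted by a naive substitution into \eqref{Qcut1}; one must pass through Theorem~\ref{Qcutsub}, carefully combine the polynomial contributions with the logarithmic contribution, and verify that the correct sign and coefficient emerge at $\theta=\pi/2$. By contrast, the radial-harmonicity calculation of the first step and the source-singularity matching of the second step are direct applications of \eqref{Jacde}, \eqref{Qnear1}, \eqref{Qcutnear1}, and \eqref{Eucfundsol}.
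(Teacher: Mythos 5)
Your first two steps coincide with the paper's proof: identify the radial harmonic solutions as Jacobi functions of the second kind with $\gamma=0$, $\alpha\in\{n-1,2n-1,7\}$, $\beta\in\{0,1,3\}$ (the paper simply asserts this is easy to verify from the radial part of the Laplace--Beltrami operator and Koornwinder's table, whereas you check it directly via the substitution $z=\cosh 2r$ in \eqref{Jacde}; either route works), and then fix the constant by matching \eqref{Qnear1}, \eqref{Qcutnear1} with $\epsilon\sim 2\rho^2/R^2$ against the Euclidean fundamental solution \eqref{Eucfundsol} with $s=2\alpha+2$. Your constant $\Gamma(\alpha+\beta+1)/\bigl(2\pi^{\alpha+1}R^{2\alpha}\Gamma(\beta+1)\bigr)$ reproduces all three prefactors, including $302\,400/(\pi^8R^{14})$, so this part is correct and is essentially the paper's argument.

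Your third step, the analysis at $\theta=\pi/2$, is not part of the paper's proof (the paper only matches at the source and invokes the antipodal picture by analogy with \cite{Cohlhypersphere}), and as you sketch it, it is not quite right. Reading the leading behavior near $\theta=\pi/2$ off the $\log\frac{1+x}{1-x}$ term of Theorem \ref{Qcutsub} is only valid in the complex case $\beta=0$. For $\beta\in\{1,3\}$ the dominant singularity of ${\sf Q}_0^{(a,b)}$ at $x=-1$ is the power $(1+x)^{-b}$, coming from the finite-sum terms: each contains $(1+x)^{-k}P_k^{(a-k,b-k)}(x)$, and since $P_k^{(a-k,b-k)}$ carries the factor $((1+x)/2)^{k-b}$ for $k>b$, the singular order saturates at $(1+x)^{-b}\sim(\pi/2-\theta)^{-2b}$, i.e.\ orders $-2$ and $-6$ in the quaternionic and octonionic cases. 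This matches the geometry: the cut locus of a point in $\CC\Pii^n_R$, $\HH\Pii^n_R$, $\OO\Pii^2_R$ is a projective subspace of real codimension $d\in\{2,4,8\}$, not a point, so the compensating source is distributed over that submanifold and the identity $\Delta\mcg=\delta_o-\delta_{\bar o}$ with a point mass at an antipode holds only on the real sphere. None of this affects the stated formulas, since the normalization is already pinned down by the local Euclidean matching, but if you retain the antipodal verification it should be reworked along these lines.
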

\begin{proof}
The complex, quaternionic and octonionic rank one symmetric spaces all have even dimensions, namely $s\in\{2n,4n,16\}$, respectively. It is easy to verify that the homogeneous spherically symmetric solutions of Laplace's equation on the complex, quaternionic and octonionic rank one symmetric spaces are given by Jacobi functions of the first and second kind for the noncompact manifolds and are given by Jacobi functions of the first and second kind on-the-cut for the compact manifolds, both having $\gamma=0$, $\alpha\in\{n-1,2n-1,7\}$ and $\beta\in\{0,1,3\}$ respectively. Furthermore, one requires that locally these fundamental solutions match up to a Euclidean fundamental solution.
Using \eqref{Qnear1}, \eqref{Qcutnear1}, assuming $\gamma=0$, $\alpha=a$, $\beta=b$, $a\in\N$, $b\in\N_0$, one has
the following behaviors near the singularity at unity for the Jacobi function of the second kind and the Jacobi function of the second kind on-the-cut, for $\epsilon\to0^{+}$,
\begin{eqnarray}
\label{Qnear10int}
&&\hspace{-8.2cm}Q_0^{(a,b)}(1+\epsilon)\sim
{\sf Q}_0^{(a,b)}(1-\epsilon)
\sim\frac{2^{a-1}(a-1)!b!}{(a+b)!\,\epsilon^a}.
\end{eqnarray}
Referring to the geodesic distance on the hyperbolic manifolds as $r\in[0,\infty)$ and on the compact manifolds as $\theta\in[0,\pi/2]$, 
one has 
\begin{eqnarray}
&&\hspace{-10.3cm}\cosh(2r)\sim\cosh(2\tfrac{\rho}{R})
\sim 1+\tfrac{2\rho^2}{R^2} ,\\
&&\hspace{-10.3cm}
\cos(2\theta)\sim\cos(\tfrac{2\rho}{R})\sim1-\tfrac{2\rho^2}{R^2},
\end{eqnarray}
where $\rho$ is the Euclidean geodesic distance.
Matching locally to a Euclidean fundamental solution 
\eqref{Eucfundsol}
using the 
flat-space limit (see for instance \cite[\S2.4]{CohlPalmer}), one is able to determine the constants of proportionality which are multiplied by the Jacobi functions of the second kind. This completes the proof.
\end{proof}
\noindent 
Since fundamental solutions on the rank one symmetric spaces all have $\gamma=0$, we first present the expansions in these cases. For the Jacobi functions of the first kind the $\gamma=0$ case just corresponds with unity. However, for the Jacobi functions of the second kind, these functions are quite rich, and the expansions are quite useful in that they allow one to produce separated eigenfunction expansions of a fundamental solution of Laplace's equation on these isotropic spaces.

\begin{rem}The reader should be aware that 
the addition theorems presented below for the Jacobi functions of the second kind with $\gamma=0$ are well-defined except in the case where the $\alpha$ and $\beta$ parameters on the left-hand sides are non-negative integers. In that case, special care must be taken (refer to Theorems \ref{Qnotcutsumthm}, \ref{Qcutsub}), 
even though the functions at these parameter values may be obtained by taking the appropriate limit.
\end{rem}

\begin{cor}
\label{AddnhypQgend2g0}
Let $\alpha,\beta\in\mathbb C$, 
$\beta\not\in\Z$, $z_1,z_2\in\mathbb C\setminus(-\infty,1]$,
$x_1,x_2\in\mathbb C\setminus((-\infty,-1]\cup[1,\infty))$,
$x,w\in\mathbb C$, 
with 
$Z^\pm$, ${\sf X}^\pm$ as defined in 
\eqref{ZZdef}, \eqref{XXdef}, respectively,
such that the complex variables $\alpha,\beta,z_1,z_2,x_1,x_2,x,w$ are in some
yet to be determined neighborhood of the real line.
Then
\begin{eqnarray}
&&\hspace{-0.55cm}Q_0^{(\alpha,\beta)}(Z^\pm)=
\Gamma(\alpha+1)\Gamma(\alpha+1)
\Gamma(\beta+1)
\sum_{k=0}^\infty
\frac{(\alpha+1)_k(\alpha+\beta+1)_k}{(\alpha+k)(\beta+1)_k}\nonumber\\[-0.1cm]
&&\hspace{0.25cm}\times
\sum_{l=0}^k
(\mp 1)^{k-l}(\alpha+k+l)(\alpha+1)_l(z_1z_2)^{k-l}((z_1^2-1)(z_2^2-1))^{\frac{k+l}{2}}\nonumber\\
&&\hspace{0.25cm}\times
\bm{Q}_{-k}^{(\alpha+k+l,\beta+k-l)}(2z_>^2\!-\!1)
\bm{P}_{-k}^{(\alpha+k+l,\beta+k-l)}(2z_<^2\!-\!1)
w^{k-l}P_l^{(\alpha-\beta-1,\beta+k-l)}(2w^2\!-\!1)\frac{\beta\!+\!k\!-\!l}{\beta} C_{k-l}^\beta(x),
\label{Qregularizedb}
\end{eqnarray}

\begin{eqnarray}
&&\hspace{-0.55cm}
{\mathsf{Q}}_0^{(\alpha,\beta)}({\sf X}^\pm)=
\Gamma(\alpha+1)
\sum_{k=0}^\infty(-1)^k
\frac{(\alpha+1)_k(\alpha+\beta+1)_k}{(\alpha+k)(\beta+1)_k}\nonumber\\[-0.1cm]
&&\hspace{0.25cm}\times
\sum_{l=0}^k(\mp 1)^{k-l}
(\alpha+k+l)(-\beta)_l(x_1x_2)^{k-l}((1-x_1^2)(1-x_2^2))^{\frac{k+l}{2}}\nonumber\\
&&\hspace{0.25cm}\times{\mathsf{Q}}_{-k}^{(\alpha+k+l,\beta+k-l)}(2x_<^2\!-\!1)
\bm{\mathsf P}_{-k}^{(\alpha+k+l,\beta+k-l)}(2x_>^2\!-\!1)w^{k-l}P_l^{(\alpha-\beta-1,\beta+k-l)}(2w^2\!-\!1)\frac{\beta\!+\!k\!-\!l}{\beta} C_{k-l}^\beta(x).
\label{Qcutregularizedg0}
\end{eqnarray}
\end{cor}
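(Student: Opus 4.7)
The plan is to derive Corollary \ref{AddnhypQgend2g0} as a direct specialization of Theorem \ref{AddnhypQgend1}, taking $\gamma = 0$ in the expansions \eqref{Qregularized} and \eqref{Qcutregularized} respectively.

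For the hyperbolic identity \eqref{Qregularizedb}, I would start with \eqref{Qregularized} and substitute $\gamma = 0$ throughout. The overall prefactor $\Gamma(\alpha+1)\Gamma(\alpha+\gamma+1)\Gamma(\beta+\gamma+1)$ collapses to $\Gamma(\alpha+1)^2\Gamma(\beta+1)$; the Pochhammer symbols $(\alpha+\beta+\gamma+1)_k$ and $(\alpha+\gamma+1)_l$ become $(\alpha+\beta+1)_k$ and $(\alpha+1)_l$ respectively; and $\bm{P}_{\gamma-k}^{(\alpha+k+l,\beta+k-l)}$, $\bm{Q}_{\gamma-k}^{(\alpha+k+l,\beta+k-l)}$ become $\bm{P}_{-k}^{(\alpha+k+l,\beta+k-l)}$, $\bm{Q}_{-k}^{(\alpha+k+l,\beta+k-l)}$. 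The latter cause no difficulty because the Olver normalizations \eqref{hypPbdef}, \eqref{hypQbdef} are entire in the degree, and at $\gamma = 0$ the conditions $\alpha+\gamma,\beta+\gamma\notin -\mathbb N$ reduce to $\alpha,\beta\notin -\mathbb N$, which are already consequences of the generic parameter assumptions in the corollary. No limit argument is required.

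For the on-the-cut identity \eqref{Qcutregularizedg0}, I would apply the same substitution $\gamma = 0$ directly in \eqref{Qcutregularized}. The prefactor $\Gamma(\alpha+1)$ is unchanged, the Pochhammer symbol $(-\beta-\gamma)_l$ becomes $(-\beta)_l$, and $(\alpha+\beta+\gamma+1)_k$ collapses as before. The hypothesis $\beta\notin\mathbb{Z}$ stated in the corollary ensures $(-\beta)_l \neq 0$ for every $l\in\mathbb N_0$, so the full double-sum structure genuinely survives (as opposed to terminating). The inner functions $\mathsf{Q}_{-k}^{(\alpha+k+l,\beta+k-l)}$ and $\bm{\mathsf P}_{-k}^{(\alpha+k+l,\beta+k-l)}$ are well-defined under these parameter restrictions via the hypergeometric representations of \S\ref{Jac2cut}, and the resulting formula is exactly \eqref{Qcutregularizedg0} after rearranging signs (noting $(\pm 1)^{k-l}(-1)^{k+l} = (\mp 1)^{k-l}$ is absorbed into the $(-1)^k$ that has been factored out front).

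There is no substantive obstacle here: the entire content of the corollary is a specialization-and-bookkeeping exercise applied to the already-proven Theorem \ref{AddnhypQgend1}. The only point worth verifying with care is that the hypotheses of the corollary suffice to guarantee term-by-term well-definedness of both expansions at $\gamma=0$; specifically, one must check that for each $k,l\in\mathbb N_0$ with $l\le k$ neither the shifted factorials in the coefficients nor the functions $\bm Q_{-k}^{(\alpha+k+l,\beta+k-l)}$, $\mathsf Q_{-k}^{(\alpha+k+l,\beta+k-l)}$ develop genuine (non-removable) singularities — a routine check inherited from the proofs of Theorems \ref{AddnhypQgen} and \ref{AddntrigQgen} once $\gamma$ is fixed at zero.
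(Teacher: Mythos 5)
Your proposal is correct and is essentially the paper's own proof: the paper simply substitutes $\gamma=0$ into the Olver normalized addition theorems \eqref{Qregularized} and \eqref{Qcutregularized} of Theorem \ref{AddnhypQgend1}, exactly as you do, with the Olver normalization making the degree-$(-k)$ functions well defined without any limiting argument. Your closing parenthetical about rearranging signs is superfluous (the factor $(-1)^k$ and the $(\mp 1)^{k-l}$ are already present in \eqref{Qcutregularized}, so the $\gamma=0$ substitution requires no sign manipulation), but it does not affect the validity of the argument.
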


\begin{proof}
Substituting 
$\gamma=0$ in Theorem 
\ref{AddnhypQgend1} for the
Jacobi functions of the 
second kind
completes the proof.
\end{proof}

Next we give examples of the expansions for complex and quaternionic hyperbolic spaces where $\beta\in\{0,1,3\}$ respectively.
First we treat the complex case which
corresponds to complex hyperbolic space and complex projective space.
In order to do this we start with 
Corollary \ref{AddnhypQgend1} and 
take the limit as $\beta\to 0$ using
\cite[(6.4.13)]{AAR}
\begin{equation}
\lim_{\mu\to0}\frac{n+\mu}{\mu}C_n^\mu(x)=\epsilon_n T_n(x),
\label{ChebyGeg}
\end{equation}
where $\epsilon_n:=2-\delta_{n,0}$ is the Neumann factor commonly
appearing in Fourier series.
\begin{cor}
\label{comcase}
Let $\alpha\in\mathbb C$, $z_1,z_2\in\mathbb C\setminus(-\infty,1]$,
$x_1,x_2\in\mathbb C\setminus((-\infty,-1]\cup[1,\infty))$,
$x,w\in\mathbb C$, 
with 
$Z^\pm$, ${\sf X}^\pm$ as defined in 
\eqref{ZZdef}, \eqref{XXdef}, respectively,
such that the complex variables $\alpha,z_1,z_2,x_1,x_2,x,w$ are in some
yet to be determined neighborhood of the real line.
Then
\begin{eqnarray}
&&\hspace{-0.55cm}Q_0^{(\alpha,0)}(Z^\pm)=
\Gamma(\alpha+1)\Gamma(\alpha+1)
\sum_{k=0}^\infty
\frac{(\alpha+1)_k(\alpha+1)_k}{(\alpha+k)k!}\nonumber\\[-0.1cm]
&&\hspace{0.25cm}\times
\sum_{l=0}^k
(\mp 1)^{k-l}(\alpha+k+l)(\alpha+1)_l(z_1z_2)^{k-l}((z_1^2-1)(z_2^2-1))^{\frac{k+l}{2}}\nonumber\\
&&\hspace{1.25cm}\times
\bm{Q}_{-k}^{(\alpha+k+l,k-l)}(2z_>^2\!-\!1)
\bm{P}_{-k}^{(\alpha+k+l,k-l)}(2z_<^2\!-\!1)w^{k-l}P_l^{(\alpha-1,k-l)}(2w^2\!-\!1)\epsilon_{k-l}T_{k-l}(x),
\label{Qregularizedc}
\end{eqnarray}
\begin{eqnarray}
&&\hspace{-0.55cm}
{\mathsf{Q}}_0^{(\alpha,0)}({\sf X}^\pm)=
\Gamma(\alpha+1)
\sum_{k=0}^\infty(-1)^k
\frac{(\alpha+1)_k(\alpha+1)_k}{(\alpha+k)k!}\nonumber\\[-0.1cm]
&&\hspace{0.25cm}\times
\sum_{l=0}^k(\mp 1)^{k-l}
(\alpha+k+l)(x_1x_2)^{k-l}((1-x_1^2)(1-x_2^2))^{\frac{k+l}{2}}\nonumber\\
&&\hspace{1.25cm}\times
{\mathcal{Q}}_{-k}^{(\alpha+k+l,k-l)}(2x_<^2\!-\!1)
\bm{\mathsf P}_{-k}^{(\alpha+k+l,k-l)}(2x_>^2\!-\!1)w^{k-l}P_l^{(\alpha-1,k-l)}(2w^2\!-\!1)\epsilon_{k-l}T_{k-l}(x).
\label{Qcutregularized0}
\end{eqnarray}
\end{cor}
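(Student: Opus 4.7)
The plan is to obtain Corollary \ref{comcase} as the $\beta\to 0$ limit of Corollary \ref{AddnhypQgend2g0}, taken termwise in the double series. The prefactors simplify by direct substitution: $\Gamma(\beta+1)\to 1$, $(\beta+1)_k\to k!$, and $(\alpha+\beta+1)_k\to(\alpha+1)_k$, which produces the front ratio $(\alpha+1)_k(\alpha+1)_k/((\alpha+k)k!)$ that appears in both \eqref{Qregularizedc} and \eqref{Qcutregularized0}. The only genuinely singular factors in $\beta$ occur in two places, which I would treat separately.

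First, the combination $\tfrac{\beta+k-l}{\beta}\,C_{k-l}^{\beta}(x)$, which is present in both \eqref{Qregularizedb} and \eqref{Qcutregularizedg0}, is an indeterminate form of type $\tfrac{0}{0}$ at $\beta=0$ for $k>l$, and converges to $\epsilon_{k-l}T_{k-l}(x)$ by the limit relation \eqref{ChebyGeg}; for $k=l$ it reduces to $1=\epsilon_0 T_0(x)$, so the uniform form $\epsilon_{k-l}T_{k-l}(x)$ is correct throughout. Second, only in the on-the-cut formula, the product $(-\beta)_l\,{\sf Q}_{-k}^{(\alpha+k+l,\beta+k-l)}(\,\cdot\,)$ is an indeterminate form $0\cdot\infty$ for $l\ge 1$, the zero of the Pochhammer factor exactly compensating the pole generated by the integer second parameter of ${\sf Q}$ (this is the same gamma-function cancellation mechanism as in \eqref{limitfinite} and is the motivation for the definition \eqref{limfunQ}); the resulting finite value is $\mathcal{Q}_{-k}^{(\alpha+k+l,k-l)}$. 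In the hyperbolic (non-cut) formula no such extra indeterminacy arises because the corresponding Pochhammer factor in the $l$-sum is $(\alpha+1)_l$ rather than $(-\beta)_l$, and the Olver-normalized functions $\bm{Q}_{-k}^{(\alpha+k+l,\beta+k-l)}$, $\bm{P}_{-k}^{(\alpha+k+l,\beta+k-l)}$ are entire in $\beta$ by construction, so I would simply substitute $\beta=0$ in those factors.

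The main obstacle is justifying the interchange of the limit $\beta\to 0$ with the infinite summation. I would handle this by noting that the double series in Corollary \ref{AddnhypQgend2g0} converges absolutely on compact subsets of the admissible domain for $(z_1,z_2,w,x)$, respectively $(x_1,x_2,w,x)$, uniformly in $\beta$ in a small punctured neighborhood of $0$ (avoiding the nonzero integer values excluded by the hypothesis $\beta\not\in\Z$), as inherited from the Flensted-Jensen--Koornwinder convergence statement invoked in the proof of Theorem \ref{AddnhypQgen}. Once this uniform control is in place, termwise passage to the limit is legitimate and, combined with the two indeterminate-form computations above, yields exactly \eqref{Qregularizedc} and \eqref{Qcutregularized0}, completing the proof.
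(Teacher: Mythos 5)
Your proposal is correct and follows essentially the same route as the paper: the paper likewise obtains \eqref{Qregularizedc} and \eqref{Qcutregularized0} by letting $\beta\to 0$ in the $\gamma=0$ Olver-normalized addition theorems (Corollary \ref{AddnhypQgend2g0}, i.e.\ Theorem \ref{AddnhypQgend1} at $\gamma=0$), using the limit \eqref{ChebyGeg} for $\tfrac{\beta+k-l}{\beta}C_{k-l}^\beta(x)$ and the definition \eqref{limfunQ} for the factor $(-\beta)_l{\sf Q}_{-k}^{(\alpha+k+l,\beta+k-l)}$. Your extra remarks on the $0\cdot\infty$ cancellation and on justifying the termwise passage to the limit only add care beyond what the paper's one-line proof records.
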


\begin{proof}
Take the limit as $\beta\to 0$
in Corollary \ref{AddnhypQgend1}
using \eqref{ChebyGeg} completes the proof.
\end{proof}

Now we treat the case corresponding to the quaternionic hyperbolic and projective spaces which correspond to $\beta=1$.

\begin{cor}
\label{quatcase}
Let $\alpha\in\mathbb C$, $z_1,z_2\in\mathbb C\setminus(-\infty,1]$,
$x_1,x_2\in\mathbb C\setminus((-\infty,-1]\cup[1,\infty))$,
$x,w\in\mathbb C$,
with 
$Z^\pm$, ${\sf X}^\pm$ as defined in 
\eqref{ZZdef}, \eqref{XXdef}, respectively,
such that the complex variables $\alpha,z_1,z_2,x_1,x_2,x,w$ are in some
yet to be determined neighborhood of the real line.
Then
\begin{eqnarray}
&&\hspace{-0.55cm}Q_0^{(\alpha,1)}(Z^\pm)=
\Gamma(\alpha+1)\Gamma(\alpha+1)
\sum_{k=0}^\infty
\frac{(\alpha+1)_k(\alpha+2)_k}{(\alpha+k)(2)_k}\nonumber\\[-0.1cm]
&&\hspace{0.25cm}\times
\sum_{l=0}^k
(\mp 1)^{k-l}(1+k-l)(\alpha+k+l)(\alpha+1)_l(z_1z_2)^{k-l}((z_1^2-1)(z_2^2-1))^{\frac{k+l}{2}}\nonumber\\
&&\hspace{1.25cm}\times
\bm{Q}_{-k}^{(\alpha+k+l,1+k-l)}(2z_>^2\!-\!1)
\bm{P}_{-k}^{(\alpha+k+l,1+k-l)}(2z_<^2\!-\!1)w^{k-l}P_l^{(\alpha-2,1+k-l)}(2w^2\!-\!1)U_{k-l}(x),
\label{Qregularized1}
\end{eqnarray}
\begin{eqnarray}
&&\hspace{-0.55cm}
{\mathsf{Q}}_0^{(\alpha,1)}({\sf X}^\pm)=
\Gamma(\alpha+1)
\sum_{k=0}^\infty(-1)^k
\frac{(\alpha+1)_k(\alpha+2)_k}{(\alpha+k)(2)_k}\nonumber\\[-0.1cm]
&&\hspace{0.25cm}\times
\sum_{l=0}^k(\mp 1)^{k-l}
(1+k-l)(\alpha+k+l)(x_1x_2)^{k-l}((1-x_1^2)(1-x_2^2))^{\frac{k+l}{2}}\nonumber\\
&&\hspace{1.25cm}\times
{\mathcal{Q}}_{-k}^{(\alpha+k+l,1+k-l)}(2x_<^2\!-\!1)
\bm{\mathsf P}_{-k}^{(\alpha+k+l,1+k-l)}(2x_>^2\!-\!1)w^{k-l}P_l^{(\alpha-2,1+k-l)}(2w^2\!-\!1)U_{k-l}(x).
\label{Qcutregularized1}
\end{eqnarray}
\end{cor}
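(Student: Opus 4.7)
The plan is to follow the same template as the proof of Corollary \ref{comcase} (the complex case with $\beta = 0$) but specializing instead at $\beta = 1$. First I would start with Corollary \ref{AddnhypQgend2g0}, which is the $\gamma = 0$ specialization of the addition theorem for the Olver-normalized Jacobi functions of the second kind in both the hyperbolic and trigonometric contexts; since $\gamma = 0$ is already built in, only the $\beta = 1$ substitution remains.

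For the hyperbolic identity \eqref{Qregularized1} the substitution is direct and requires no limiting argument. Setting $\beta = 1$ in \eqref{Qregularizedb} turns $(\beta+1)_k$ into $(2)_k$ and $(\beta+k-l)/\beta$ into $1+k-l$, while the Gegenbauer polynomial collapses to a Chebyshev polynomial of the second kind via the classical identity
\[
C_{k-l}^{1}(x) = U_{k-l}(x),
\]
and $P_l^{(\alpha-\beta-1,\beta+k-l)} = P_l^{(\alpha-2,1+k-l)}$. All the remaining factors $\Gamma(\alpha+1)^2$, $(\alpha+1)_k$, $(\alpha+\beta+1)_k=(\alpha+2)_k$, $(\alpha+1)_l$, $(\alpha+k+l)$, etc., are finite and substitute cleanly.

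For the trigonometric identity \eqref{Qcutregularized1} the substitution is slightly more delicate because in \eqref{Qcutregularizedg0} the factor $(-\beta)_l$ vanishes for $l \geq 2$ when $\beta = 1$, while at the same time ${\sf Q}_{-k}^{(\alpha+k+l,\beta+k-l)}(2x_<^2-1)$ becomes singular at these parameter values. The key step is to package the product as a single limit using definition \eqref{limfunQ}, namely
\[
\mathcal{Q}_{-k}^{(\alpha+k+l,1+k-l)}(2x_<^2-1) = \lim_{\gamma\to 0,\,\beta\to 1}(-\beta-\gamma)_l\,{\sf Q}_{\gamma-k}^{(\alpha+k+l,\beta+k-l)}(2x_<^2-1),
\]
which is finite and well-defined for every $l \in \{0,1,\ldots,k\}$. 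The remaining factors $(\beta+1)_k$, $(\beta+k-l)/\beta$, and $C_{k-l}^\beta$ behave regularly and specialize to $(2)_k$, $1+k-l$, and $U_{k-l}$ exactly as in the hyperbolic case.

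The one genuine point to check — and the only thing that could conceivably go wrong — is that the joint limit in $(\gamma,\beta)$ really does produce the same Olver-type regularization on both sides of the identity, i.e.\ that the $l\ge 2$ terms on the right-hand side of \eqref{Qcutregularizedg0} are indeed governed by the limit in \eqref{limfunQ} and not by some additional residue contribution from the companion poles of ${\sf Q}_{\gamma-k}^{(\alpha+k+l,\beta+k-l)}$. This is handled by the fact that, for the parameter values appearing here, the only singularity in the product comes from the explicit $(-\beta-\gamma)_l$ factor multiplied by the gamma-function prefactor implicit in ${\sf Q}$, which is exactly the pairing absorbed by \eqref{limfunQ}. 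Once that observation is made, the rest of the argument is routine bookkeeping, and the proof is concluded by writing "Substituting $\beta = 1$ in Corollary \ref{AddnhypQgend2g0}, applying $C_{k-l}^1 = U_{k-l}$, and invoking \eqref{limfunQ} in the trigonometric case, completes the proof."
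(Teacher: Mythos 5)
Your proposal is correct and follows essentially the same route as the paper, which simply takes the limit $\beta\to1$ (with $\gamma=0$) in the Olver normalized addition theorems and uses $C_n^1(x)=U_n(x)$. Your extra care in packaging $(-\beta)_l\,{\sf Q}_{\gamma-k}^{(\alpha+k+l,\beta+k-l)}$ into ${\mathcal Q}_{-k}^{(\alpha+k+l,1+k-l)}$ via \eqref{limfunQ} for the trigonometric identity is exactly what the paper's terse proof leaves implicit, so it is a welcome clarification rather than a deviation.
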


\begin{proof}
Take the limit as $\beta\to 1$
in Corollary \ref{AddnhypQgend1}
using \cite[\href{http://dlmf.nist.gov/18.7.E4}{(18.7.4)}]{NIST:DLMF}
which connects the Chebyshev polynomial of the second kind to the Gegenbauer polynomial with parameter equal to unity, namely $C_n^1(x)=U_n(x)$. This completes the proof.
\end{proof}

Now we treat the case corresponding to the octonionic hyperbolic space and octonionic projective space. This corresponds to $\beta=3$.

\begin{cor}
\label{octcase}
Let $\alpha\in\mathbb C$, $z_1,z_2\in\mathbb C\setminus(-\infty,1]$,
$x_1,x_2\in\mathbb C\setminus((-\infty,-1]\cup[1,\infty))$,
$x,w\in\mathbb C$, 
with 
$Z^\pm$, ${\sf X}^\pm$ as defined in 
\eqref{ZZdef}, \eqref{XXdef}, respectively,
such that the complex variables $\alpha,z_1,z_2,x_1,x_2,x,w$ are in some
yet to be determined neighborhood of the real line.
Then
\begin{eqnarray}
&&\hspace{-0.55cm}Q_0^{(\alpha,3)}(Z^\pm)=
2\Gamma(\alpha+1)\Gamma(\alpha+1)
\sum_{k=0}^\infty
\frac{(\alpha+1)_k(\alpha+4)_k}{(\alpha+k)(4)_k}\nonumber\\[-0.1cm]
&&\hspace{0.25cm}\times
\sum_{l=0}^k
(\mp 1)^{k-l}(3+k-l)(\alpha+k+l)(\alpha+1)_l(z_1z_2)^{k-l}((z_1^2-1)(z_2^2-1))^{\frac{k+l}{2}}\nonumber\\
&&\hspace{1.25cm}\times
\bm{Q}_{-k}^{(\alpha+k+l,3+k-l)}(2z_>^2\!-\!1)
\bm{P}_{-k}^{(\alpha+k+l,3+k-l)}(2z_<^2\!-\!1)w^{k-l}P_l^{(\alpha-4,3+k-l)}(2w^2\!-\!1)C_{k-l}^3(x),
\label{Qregularized3}
\end{eqnarray}
\begin{eqnarray}
&&\hspace{-0.55cm}
{\mathsf{Q}}_0^{(\alpha,3)}({\sf X}^\pm)=
\frac13\Gamma(\alpha+1)
\sum_{k=0}^\infty(-1)^k
\frac{(\alpha+1)_k(\alpha+4)_k}{(\alpha+k)(4)_k}\nonumber\\[-0.1cm]
&&\hspace{0.25cm}\times
\sum_{l=0}^k(\mp 1)^{k-l}
(3+k-l)(\alpha+k+l)(x_1x_2)^{k-l}((1-x_1^2)(1-x_2^2))^{\frac{k+l}{2}}\nonumber\\
&&\hspace{1.25cm}\times
{\mathcal{Q}}_{-k}^{(\alpha+k+l,3+k-l)}(2x_<^2\!-\!1)
\bm{\mathsf P}_{-k}^{(\alpha+k+l,3+k-l)}(2x_>^2\!-\!1)w^{k-l}P_l^{(\alpha-4,3+k-l)}(2w^2\!-\!1)C^3_{k-l}(x).
\label{Qcutregularized3}
\end{eqnarray}
\end{cor}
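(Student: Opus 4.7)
The plan is to follow exactly the template established by the proofs of Corollaries \ref{comcase} (the complex case $\beta=0$) and \ref{quatcase} (the quaternionic case $\beta=1$), specializing Corollary \ref{AddnhypQgend1} to $\gamma=0$ and $\beta=3$. Unlike the $\beta\to 0$ limit, which required the Chebyshev identification \eqref{ChebyGeg}, and the $\beta=1$ case, which required recognizing the Chebyshev polynomial of the second kind via $C_n^1(x)=U_n(x)$, the value $\beta=3$ is interior to the allowed parameter range for the Gegenbauer polynomial $C_n^3(x)$, so no limiting identity is needed for the $C_{k-l}^\beta(x)$ factor; it simply becomes $C_{k-l}^3(x)$.

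For the hyperbolic identity \eqref{Qregularized3}, I would take Corollary \ref{AddnhypQgend2g0}, equation \eqref{Qregularizedb}, and directly substitute $\beta=3$. The shifted factorials simplify as $(\beta+1)_k\to(4)_k$ and $(\alpha+\beta+1)_k\to(\alpha+4)_k$; the prefactor picks up $\Gamma(\beta+1)=\Gamma(4)=6$; and the combination $(\beta+k-l)/\beta\cdot C_{k-l}^\beta(x)$ becomes $(3+k-l)/3\cdot C_{k-l}^3(x)$. Absorbing the factor of $6\cdot\tfrac{1}{3}=2$ into the prefactor gives exactly the $2\,\Gamma(\alpha+1)^2$ coefficient and the $(3+k-l)$ multiplier on $C_{k-l}^3(x)$ displayed in \eqref{Qregularized3}.

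For the trigonometric (on-the-cut) identity \eqref{Qcutregularized3}, I would start from \eqref{Qcutregularizedg0} and again set $\beta=3$. The only subtlety compared to the hyperbolic case is that the shifted factorial $(-\beta)_l$ appearing in \eqref{Qcutregularizedg0} must be combined with the singular behavior of ${\sf Q}_{-k}^{(\alpha+k+l,\beta+k-l)}$ at $\beta=3\in\N_0$, using the special normalization introduced in \eqref{limfunQ}:
\[
\mathcal{Q}_{-k}^{(\alpha+k+l,3+k-l)}(x)=\lim_{\gamma\to 0,\,\beta\to 3}(-\beta-\gamma)_l\,{\sf Q}_{\gamma-k}^{(\alpha+k+l,\beta+k-l)}(x).
\]
After this absorption, the $(-\beta)_l$ factor disappears from the summand while ${\sf Q}$ is replaced by $\mathcal{Q}$. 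The remaining $(\beta+k-l)/\beta\cdot C_{k-l}^\beta(x)$ becomes $(3+k-l)/3\cdot C_{k-l}^3(x)$, whose $1/3$ prefactor is pulled out front to produce the overall $\tfrac{1}{3}\Gamma(\alpha+1)$ coefficient in \eqref{Qcutregularized3}.

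There is no genuine obstacle here beyond careful bookkeeping of the constants and correct invocation of the normalization \eqref{limfunQ}; the hardest aspect, which was already resolved in the proofs of Corollaries \ref{comcase} and \ref{quatcase}, is the justification that the joint $(\gamma,\beta)\to(0,3)$ limit can be taken term-by-term inside the double series, which follows from the absolute convergence established by Flensted-Jensen and Koornwinder \cite[Theorem 2.1]{FlenstedJensenKoorn79} together with the finiteness of $(-\beta-\gamma)_l\,{\sf Q}_{\gamma-k}^{(\alpha+k+l,\beta+k-l)}$ guaranteed by \eqref{limfunQ}.
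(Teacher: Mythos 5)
Your proposal is correct and follows essentially the same route as the paper, which simply sets $\beta=3$ (with $\gamma=0$) in the Olver-normalized addition theorems; your constant bookkeeping ($\Gamma(4)\cdot\tfrac13=2$ for \eqref{Qregularized3}, $\tfrac13$ for \eqref{Qcutregularized3}) and your explicit invocation of \eqref{limfunQ} to absorb $(-\beta)_l$ into $\mathcal{Q}_{-k}^{(\alpha+k+l,3+k-l)}$ are exactly what the paper's terse substitution implicitly requires. If anything, you are slightly more careful than the printed proof about the $\beta\to 3$ limit in the on-the-cut case.
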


\begin{proof}
Setting $\beta=3$
in Corollary \ref{AddnhypQgend1}
completes the proof.
\end{proof}

The above calculations look almost trivial in that they are simply substitutions of the values of $\beta\in\{0,1,3\}$ and $\gamma=0$ in the addition theorems given by Theorem \ref{AddnhypQgend1}. However, it should be understood that ordinarily these computations would be extremely difficult, particularly if one was to use the standard normalizations of the Jacobi functions. With standard normalizations of Jacobi functions these particular values, and in fact for values of integer parameters $(\alpha,\beta)$ and degrees $\gamma$, the Jacobi functions are not even defined. It is only because of the strategic choice of the particular normalization that we have chosen that the evaluation of these particular values becomes quite easy. We will further take advantage of these expansions in later publications.

\subsection*{Acknowledgements}
We would like to thank Tom Koornwinder for so many things:~first for being such a great source of ideas, inspiration, insight and experience over the years; for very useful conversations which significantly improved this manuscript; for his essential help in describing and editing for accuracy, his story regarding the addition theorem for Jacobi polynomials and his interactions with Dick Askey; for informing us about Moriz All\'{e} and his pioneering work on the addition theorem ultraspherical polynomials; and for his assistance and instruction in constructing a rigorous proof of Theorem \ref{AddnhypQgen}. Thanks also to Jan Derezi\'nsky for valuable discussions and in particular about Olver normalization.


\def\cprime{$'$} \def\dbar{\leavevmode\hbox to 0pt{\hskip.2ex \accent"16\hss}d}

\end{document}